\title{\textbf{Some toy models of self-organized criticality in percolation}}
\author{Rapha\"el Cerf\footnote{
\noindent
DMA, \'Ecole normale sup\'erieure, CNRS, PSL University, 75005, Paris.}
\footnote{
\noindent Universit\'e Paris-Saclay, CNRS, Laboratoire de math\'ematiques d'Orsay, 91405, Orsay, France.}
        \and
        Nicolas Forien\footnotemark[1]\ \footnotemark[2]\ \footnote{Aix Marseille Univ, CNRS, Centrale Marseille, I2M, Marseille, France}
        }
\newcommand{\abs}[1]{\left| #1 \right|}
\newcommand{\norme}[1]{\left\| #1 \right\|}
\newcommand{\acc}[1]{\left\{ #1 \right\}}
\newcommand{\croch}[1]{\left[ #1 \right]}
\newtheorem{theorem}{Theorem}
\newtheorem{lemma}{Lemma}
\newtheorem*{remark}{Remark}
\newcommand{\N}{\mathbb{N}}
\newcommand{\Z}{\mathbb{Z}}
\newcommand{\Proba}{\mathbb{P}}
\newcommand{\Ed}{\mathbb{E}^d}
\newcommand{\En}{\mathbb{E}_n}
\newcommand{\Ent}[1]{\left\lfloor #1\right\rfloor}
\newcommand{\Ceil}[1]{\left\lceil #1\right\rceil}
\newcommand{\souspreuve}[1]{\noindent\textbf{#1}}
\newcommand{\point}{\noindent $\bullet$\ }
\newcommand{\demi}{\frac{1}{2}}
\newcommand{\cvninfty}{\stackrel{n\rightarrow\infty}{\longrightarrow}}
\newcommand{\cvloi}{\stackrel{\mathcal{L}}{\longrightarrow}}
\newcommand{\eqninfty}{\stackrel{n\rightarrow\infty}{\sim}}
\newcommand{\Mn}{\mathcal{M}_n}
\newcommand\numberthis{\addtocounter{equation}{1}\tag{\theequation}}
\newcommand{\diam}{{\rm diam}}
\newcommand{\limsupn}{\limsup\limits_{n\to\infty}}
\newcommand{\liminfn}{\liminf\limits_{n\to\infty}}
\newcommand{\quadet}{\quad\text{and}\quad}
\newcommand{\qquadet}{\qquad\text{and}\qquad}
\newcommand*{\biggg}[1]{{\hbox{$\left#1\vbox to20.5\p@{}\right.\n@space$}}}
\newcommand*{\Biggg}[1]{{\hbox{$\left#1\vbox to23.5\p@{}\right.\n@space$}}}
\newcommand{\quadou}{\quad\text{where}\quad}
\newcommand{\qquadou}{\qquad\text{where}\qquad}
\newcommand{\quador}{\quad\text{or}\quad}
\newcommand{\quadimplique}{\quad\Rightarrow\quad}
\newcommand{\qquadimplique}{\qquad\Rightarrow\qquad}
\newcommand{\quadavec}{\quad\text{with}\quad}
\newcommand{\qquadavec}{\qquad\text{with}\qquad}
\newcommand{\connecte}{\stackrel{\omega}{\longleftrightarrow}}
\newcommand{\Edge}[1]{\mathbb{E}\croch{#1}}
\newcommand{\dprime}{^{\prime\prime}}
\newcommand{\guillemets}[1]{``#1''}
\begin{document}
\maketitle

\begin{abstract}
We consider the Bernoulli percolation model in a finite box and we introduce an automatic control of the percolation parameter, which is a function of the percolation configuration. For a suitable choice of this automatic control, the model is self-critical, i.e., the percolation parameter converges to the critical point~$p_c$ when the size of the box tends to infinity. We study here three simple examples of such models, involving the size of the largest cluster, the number of vertices connected to the boundary of the box, or the distribution of the cluster sizes.
\end{abstract}

\textbf{Keywords:} percolation, criticality, self-organized criticality
\\

\textbf{Mathematics Subject Classification (2020):} 82B43, 60K35, 82B20, 82B27


\section{Introduction}

Our goal is to present a simple model of self-organized criticality built upon the classical Bernoulli percolation model in~$\Z^d$,
which is amenable to a rigorous mathematical analysis.
In the next subsection, we introduce a candidate model and we state
our main theorem, which shows that the parameter of our model converges automatically towards the critical parameter of the Bernoulli
percolation model.

\subsection{Construction of the model and convergence result}

Let~$\Lambda(n)$ be the box of side~$n$ centered at~$0$ in~$\Z^d$ with~$d\geqslant 2$, and let~$\En$ be the set of edges between nearest neighbours of~$\Lambda(n)$. Consider a sequence of increasing functions~\smash{$F_n:\acc{0,1}^{\En}\rightarrow\N$} and a parameter~$a>0$ and set, for~$\omega:\En\rightarrow\acc{0,1}$ a percolation configuration on the edges of the box,
$$p_n(\omega)\ =\ \varphi_n\big(F_n(\omega)\big)
\qquadou
\varphi_n(x)\ =\ \exp\left(-\frac{x}{n^a}\right)\,.$$
This function~$p_n$ will play the role of an automatic control of the percolation parameter, and in this paper we will study three examples of such a control, involving different functions~$F_n$ (see theorem~\ref{thm_clusters_convergence}). The model we consider is given by the following probability distribution on the configurations, which is obtained by replacing the parameter~$p$ of Bernoulli percolation with our feedback function~$p_n$, with the appropriate normalization. Let
\begin{equation}
\label{defmun}
\mu_n\ :\ \omega\in\acc{0,1}^{\En}\ \longmapsto\ \frac{1}{Z_n}\mathbb{P}_{p_n(\omega)}(\omega)
\end{equation}
where 
$$Z_n\ =\ \sum_{\omega\in \acc{0,1}^{\En}}{\mathbb{P}_{p_n(\omega)}(\omega)}$$
will be called the partition function, and~$\Proba_p$ is the Bernoulli percolation measure with parameter~$p$, namely
$$\forall\omega\in\acc{0,1}^{\En}\qquad
\Proba_p(\omega)\ =\ \prod_{e\in\En}{p^{\omega(e)}(1-p)^{1-\omega(e)}}\,.$$
For~$x\in\Lambda(n)$ and~$\omega:\En\rightarrow\acc{0,1}$, we write
$$C(x,\,\omega)\ =\ \Big\{\,y\in\Lambda(n)\ :\ x\connecte y\,\Big\}$$
for the open cluster of~$x$ in the configuration~$\omega$. We show the following convergence result, valid in any dimension~$d\geqslant 2$. The critical point of the Bernoulli percolation model is denoted by~$p_c$.
\begin{theorem}
\label{thm_clusters_convergence}
If~$F_n$ is one of the following sequences of functions:
\begin{align*}
&\,\,(i)\quad F_n : \omega\longmapsto\big|C_{max}(\omega)\big|\,=\,\max\limits_{x\in\Lambda(n)}\,\abs{C(x,\,\omega)}\quadavec 0<a<d\,;\\
&\,(ii)\quad F_n : \omega\longmapsto\big|\mathcal{M}_n(\omega)\big|\,=\,\abs{\Big\{\,x\in\Lambda(n)\ :\ x\connecte\partial\Lambda(n)\,\Big\}}\quadavec d-1<a<d\,;\phantom{\max\limits_{x\in\Lambda(n)}}\\
&(iii)\quad F_n : \omega\longmapsto \abs{B_n^b(\omega)}\,=\,\abs{\Big\{\,x\in\Lambda(n)\ :\  \abs{C(x,\,\omega)}\geqslant n^b\,\Big\}}\quadavec0<b<a<d\,,\phantom{\max\limits_{x\in\Lambda(n)}}
\end{align*}
then the law of~$p_n$ under~$\mu_n$ converges to~$\delta_{p_c}$ when~$n\rightarrow\infty$, and we have the following control:
$$\forall \varepsilon>0\qquad
-\infty\ <\ \liminf\limits_{n\rightarrow\infty}\,\,\frac{1}{(\ln n)n^{v}}\ln\mu_n\big(\,\abs{p_n-p_c}>\varepsilon\,\big)
\ \leqslant\ 
\limsup\limits_{n\rightarrow\infty}\,\,\frac{1}{n^{v}}\ln\mu_n\big(\,\abs{p_n-p_c}>\varepsilon\,\big)\ <\ 0\,,$$
where the~$(\ln n)$ factor can be dropped in the case~$(i)$, and where the exponent~\smash{$v$} is given by
$$\left\{\begin{array}{ll}
v=a\wedge(d-\frac{a}{d})&\text{in case }(i)\,;\phantom{\frac{1}{1}}\\
v=d-1&\text{in case }(ii)\,;\phantom{\frac{1}{1}}\\
v=a\wedge\left(d-\frac{b}{d}\right)&\text{in case }(iii)\,.
\end{array}\right.$$
\end{theorem}
We can see that, for a large interval of the parameter~$a$, the mass of~$\mu_n$ concentrates on the configurations~$\omega$ for which~$p_n(\omega)$ is very close to~$p_c$.
Hence, our model presents a phenomenon of self-organized criticality: the percolation parameter concentrates around the critical point without the need to finely tune a parameter to a precise value (see section~\ref{secSOC} about self-organized criticality).

\subsection{An estimate on the convergence speed}

In case~$(iii)$, an estimate on the convergence speed can be obtained, provided that we assume the existence of the critical exponents~$\beta$ and~$\gamma$.
Let us briefly recall the definition of these exponents (see~\cite{Grimmett}).

The exponent~$\beta$ is related to the percolation probability~$\theta(p)$, which is the probability that the origin belongs to an infinite cluster in a percolation configuration on~$\Z^d$, with percolation parameter~$p$. It is believed (but unproven in general up to now) that~$\theta(p_c)=0$ and that we have the power-law scaling~\smash{$\theta(p)=(p-p_c)^{\beta+o(1)}$} when~$p\to p_c$ with~$p>p_c$, for a certain exponent~$\beta>0$, which depends on the underlying graph~$\Z^d$.

The exponent~$\gamma$ is related to the mean finite cluster size~$\chi(p)$, which is defined as the mean size of the cluster of the origin, conditioned on the event that this cluster is finite.
It is conjectured that we have a power-law~\smash{$\chi(p)=\abs{p-p_c}^{-\gamma+o(1)}$} when~$p\to p_c$, for a certain exponent~$\gamma>0$.

The following theorem indicates which scaling could be deduced from the unproven existence of these critical exponents.
The existence of these exponents was proven in dimension~$2$ for the case of the triangular lattice~\cite{CriticalExponents}, with~$\beta=5/36$ and~$\gamma=43/18$, and our study could easily be adapted on the triangular lattice.

\begin{theorem}
\label{thm_clusters_vitesse}
Take~$F_n=\abs{B_n^b}$ (case~$(iii)$ of theorem~\ref{thm_clusters_convergence}). Assume that there exist real constants~$\beta,\,\gamma>0$ such that
$$\limsup\limits_{\substack{p\rightarrow p_c\\p>p_c}}\,\,\frac{\ln\theta(p)}{\ln(p-p_c)}\ \leqslant\ \beta
\qquadet
\liminf\limits_{\substack{p\rightarrow p_c\\p<p_c}}\,\,\frac{\ln\chi(p)}{\ln(p_c-p)}\ \geqslant\ -\gamma\,.$$
Then, for any real parameters~$a$,~$b$ and~$c$, we have
$$0\,<\,b\,<\,a\,<\,d\quadet c\ <\ {\rm min}\left(\frac{b}{2\gamma},\,\frac{a-b}{2\gamma},\,\frac{d-a}{\beta},\,\frac{d-bd-b}{\beta}\right)
\quad \Longrightarrow\quad 
n^c(p_n-p_c)\ \cvloi\ 0\,.$$
\end{theorem}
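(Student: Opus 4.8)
The plan starts from the observation that, the limit being deterministic, $n^c(p_n-p_c)\cvloi 0$ is equivalent to $\mu_n(\abs{p_n-p_c}\geqslant\varepsilon n^{-c})\to 0$ for every fixed $\varepsilon>0$; put $\kappa=-\ln p_c>0$. Since $p_n(\omega)=\varphi_n(B_n^b(\omega))=\exp(-B_n^b(\omega)/n^a)$ is strictly decreasing in $B_n^b(\omega)$, the event $\{\abs{p_n-p_c}\geqslant\varepsilon n^{-c}\}$ coincides with $\{B_n^b\leqslant k_n^-\}\cup\{B_n^b\geqslant k_n^+\}$, where $k_n^\mp=-n^a\ln(p_c\pm\varepsilon n^{-c})=\kappa n^a\mp(\varepsilon/p_c)\,n^{a-c}+o(n^{a-c})$, so $k_n^-\leqslant\kappa n^a\leqslant k_n^+$ for $n$ large. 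As in the proof of Theorem~\ref{thm_clusters_convergence}, on $\{B_n^b=k\}$ one has $p_n\equiv\varphi_n(k)$, hence $\mu_n(B_n^b=k)=Z_n^{-1}\,\Proba_{\varphi_n(k)}(B_n^b=k)$ and
$$\mu_n\big(\abs{p_n-p_c}\geqslant\varepsilon n^{-c}\big)\ =\ \frac{1}{Z_n}\bigg(\sum_{k\leqslant k_n^-}\Proba_{\varphi_n(k)}\big(B_n^b=k\big)\ +\ \sum_{k\geqslant k_n^+}\Proba_{\varphi_n(k)}\big(B_n^b=k\big)\bigg)\,.$$
I would then invoke the polynomial lower bound $Z_n\geqslant n^{-C_0}$ furnished by the proof of Theorem~\ref{thm_clusters_convergence} (it underlies the estimate~(\ref{queue_mu_n})): since the first sum has $O(n^a)$ terms and the second $O(n^d)$ terms, it is enough to bound each summand, uniformly over the relevant $k$, by a quantity decaying faster than any power of~$n$.

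For the subcritical sum one has $k\geqslant k_n^+$, hence $p:=\varphi_n(k)\leqslant p_c-\varepsilon n^{-c}$. The plan is to use the tree-graph inequality in the form $\Proba_p(\abs{C(0,\omega)}\geqslant m)\leqslant C_1\exp(-c_1 m/\chi(p)^2)$, valid for every $p<p_c$ and $m\geqslant 1$. By monotonicity of $\chi$ and the assumption $\chi(p)\leqslant(p_c-p)^{-\gamma-\eta}$ near~$p_c$ (any fixed $\eta>0$), one obtains $\chi(p)\leqslant(\varepsilon n^{-c})^{-\gamma-\eta}$ for $n$ large, uniformly over $k\geqslant k_n^+$, and therefore
$$\Proba_p\big(B_n^b=k\big)\ \leqslant\ \Proba_p\big(B_n^b\geqslant 1\big)\ \leqslant\ \Esp{p}{B_n^b}\ \leqslant\ \abs{\Lambda(n)}\,\Proba_p\big(\abs{C(0,\omega)}\geqslant n^b\big)\ \leqslant\ C_1\abs{\Lambda(n)}\exp\!\Big(-c_1\varepsilon^{2\gamma+2\eta}\,n^{\,b-2c(\gamma+\eta)}\Big)\,.$$
The hypothesis $c<b/(2\gamma)$ lets one pick $\eta$ so small that $b-2c(\gamma+\eta)>0$, so this is smaller than any power of~$n$, uniformly over $k\geqslant k_n^+$. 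I expect this part to be routine.

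For the supercritical sum one has $k\leqslant k_n^-$, hence $p:=\varphi_n(k)\geqslant p_c+\varepsilon n^{-c}$, and one must show that $\Proba_p(B_n^b\leqslant k_n^-)$ is smaller than any power of~$n$, uniformly over~$k$. By monotonicity of $\theta$ and the assumption $\theta(p)\geqslant(p-p_c)^{\beta+\eta}$ near~$p_c$, we have $\theta(p)\geqslant(\varepsilon n^{-c})^{\beta+\eta}$ for $n$ large. The plan is to prove, by a renormalization (coarse-graining) argument run at a block scale~$\ell$, that at a supercritical parameter~$p$ the box $\Lambda(n)$ contains a cluster of at least $\tfrac{1}{2}\theta(p)\abs{\Lambda(n)}$ vertices, outside an event whose probability decays faster than any power of~$n$. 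The scale~$\ell$ has to be chosen between~$n^b$ and~$n$: large enough that the crossing cluster inside each good block already has size~$\geqslant n^b$ (so that it contributes to~$B_n^b$ and the block events can be decoupled), and small enough that there are enough blocks for a concentration estimate on their number; expressing the per-block crossing probability through~$\theta(p)$, this scale separation is what forces $c<\tfrac{1-b}{\beta}$, whereas $c<\tfrac{d-a}{\beta}$ secures $\tfrac{1}{2}\theta(p)\abs{\Lambda(n)}\geqslant n^b$ and, more to the point,
$$\tfrac{1}{2}\,\theta(p)\,\abs{\Lambda(n)}\ \geqslant\ \tfrac{1}{2}\,\varepsilon^{\beta+\eta}\,n^{\,d-c(\beta+\eta)}\ >\ \kappa n^a\ \geqslant\ k_n^-$$
for $n$ large, once $\eta$ is small enough. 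On the good event one then has $B_n^b>k_n^-$, hence $\Proba_p(B_n^b\leqslant k_n^-)$ is smaller than any power of~$n$. Combining the two sums with $Z_n\geqslant n^{-C_0}$ gives $\mu_n(\abs{p_n-p_c}\geqslant\varepsilon n^{-c})\to 0$ for every $\varepsilon>0$, which is exactly $n^c(p_n-p_c)\cvloi 0$.

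The step I expect to be the main obstacle is the supercritical estimate: one needs a lower-tail large-deviation bound for the size of the largest cluster, valid at parameters $p=p_c+\varepsilon n^{-c}$ that drift down to~$p_c$, whose failure probability still beats the $n^a/Z_n$ prefactor. The standard surface-order large deviations only give something like $\exp(-c(p)\,n^{d-1})$ with a constant $c(p)$ that degenerates as $p\downarrow p_c$, so the real work will be to control that degeneration — bounding the relevant block scale through $\theta(p)\geqslant(p-p_c)^{\beta+\eta}$ — and to verify that it is slow enough under the stated constraints on $a$, $b$ and~$c$. The subcritical side, by contrast, uses only the tree-graph inequality and Markov's inequality.
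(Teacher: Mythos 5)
There is a genuine gap, and it sits exactly where you lean on the partition function. You invoke ``the polynomial lower bound $Z_n\geqslant n^{-C_0}$ furnished by the proof of Theorem~\ref{thm_clusters_convergence}'', but no such bound exists in that proof: for the model $F_n=B_n^b$ the paper only establishes (lemma~\ref{lemme_minoration_Z_n}) that $\liminf_n n^{-2d/3}\ln Z_n>-\infty$, i.e.\ $Z_n\geqslant e^{-Cn^{2d/3}}$, a stretched-exponential bound coming from the Chebyshev argument at the fixed point $q_n$, with a cost $\exp(-2dn^dr_n/\eta)$ and $r_n\sim n^{bd/2+d/2-a}$. A polynomial bound would essentially require anti-concentration of $B_n^b$ on a single value, which is not available. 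Once the correct bound $Z_n\geqslant e^{-Cn^{2d/3}}$ is used, it is no longer ``enough to bound each summand by a quantity decaying faster than any power of $n$'': you need decay beating $e^{-Cn^{2d/3}}$. Your subcritical estimate fails this test: Markov's inequality gives only
$$\Proba_p\big(B_n^b\geqslant k_n^+\big)\ \leqslant\ \Esp{p}{B_n^b}\ \leqslant\ C_1 n^d\exp\Big(-c_1\varepsilon^{2\gamma+2\eta}n^{\,b-2c(\gamma+\eta)}\Big),$$
and since $b<\tfrac{2a}{d}-\tfrac53<\tfrac13<\tfrac{2d}{3}$, the exponent $b-2c(\gamma+\eta)$ is strictly smaller than $2d/3$, so this bound is swamped by $Z_n^{-1}$. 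The cure is the one the paper uses: rerun the block decomposition and Hoeffding concentration of lemma~\ref{lemme_majo_souscritique} with $p=p_c-\varepsilon/n^c$, inserting the hypothesis on $\chi$ only to check $\chi(p)^2=o(n^b)$ so that $\Esp{p}{A_n^b}=o(n^a)$; the deviation of $A_n^b$ from its mean then decays at rate $n^{2a-bd-d}>2d/3$, which does beat $Z_n$.

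On the supercritical side your plan is also heavier than needed and is the step you yourself flag as unresolved: a lower-tail large-deviation bound for the largest cluster at $p=p_c+\varepsilon/n^c$, with controlled degeneration of the rate, is not proved here and is not required. The paper never produces a single macroscopic cluster; it only uses $\Proba_p(\abs{C(x)}\geqslant n^b)\geqslant\theta(p)$ per vertex, the independence of the events $\{\abs{C(x)}\geqslant n^b\}$ over vertices in boxes of one sublattice (separated by more than $2n^b$), and Hoeffding again, giving decay of order $\exp(-c\,\theta(p)^2 n^{d-bd})=\exp(-c'n^{\,d-bd-2c\beta'})$, which exceeds $n^{2d/3}$; the constraints $c<\tfrac{1-b}{\beta}$ and $c<\tfrac{d-a}{\beta}$ enter only to guarantee $An^a+4dn^{b+d-1}=o(n^d\theta(p_c+\varepsilon/n^c))$. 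So the correct skeleton is: near-critical versions of lemmas~\ref{lemme_majo_souscritique} and~\ref{lemme_majo_surcritique} (as in lemmas~\ref{vitesse_souscrit} and~\ref{vitesse_surcrit}), combined with the stretched-exponential bound on $Z_n$, not a polynomial one.
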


We do not believe the condition on~$c$ to be optimal, since the term~$(d-bd-b)/\beta$ comes from a quite rough estimate (see lemma~\ref{vitesse_surcrit}), and it does not allow to deal with~$b\geqslant d/(d+1)$.
It may be possible to improve our technique to get rid of this limitation, and to obtain a similar estimate on the convergence speed for the two first models.

\subsection{Self-organized criticality}
\label{secSOC}

Our model is intended as a toy model of self-organized criticality, a concept which was coined in by the physicists Bak, Tang and Wiesenfeld in their seminal paper~\cite{BTW}.
Many physical models present a phenomenon called phase transition: there is a critical point or a critical curve in the parameter space separating two distinct regions characterized by very different macroscopic behaviours. In such systems, the behaviour of the model at criticality is of particular interest and presents some general features (e.g., fractal geometry or power-law temporal and spatial correlations) which are universal across a wide range of systems and do not depend much on the microscopic details of the system. Bak, Tang and Wiesenfeld pointed out that these \guillemets{critical features} are very common in nature, which is rather surprising because it seems that the parameters need to be finely tuned for a system to be critical. To explain this paradox, they showed that some systems tend to be naturally attracted by critical points, without any fine tuning of the parameters. They call this phenomenon self-organized criticality.

To illustrate this idea, they defined a simple model inspired by the dynamics of a sandpile.
The balance between avalanches and accumulation of sand leads to a state where the system looks critical, with a self-similar distribution of the sizes of the avalanches and the slope self-adjusting to the critical slope, which is the slope at which large-scale avalanches appear.
But despite a very simple dynamics, their model turns out to be very difficult to analyze mathematically~\cite{Dhar06ASM,Jarai18ASM,Hutchcroft20ASM}.

In~\cite{CG}, Cerf and Gorny constructed a self-critical model as a variant of the generalized Ising-Curie-Weiss model, by replacing the temperature with a function depending on the spin configuration. In this paper, we implement the same principle of a feedback from the configuration to the parameter, but within the framework of Bernoulli percolation. This technique to obtain self-organized criticality by \guillemets{artificially} replacing the control parameter with a feedback function depending on the state of the model, which is explained in section~15.4.2 of~\cite{Sornette}, was implemented by physicists to imagine self-critical variants of percolation in~\cite{SornettePerco, FSS93, SocialPerco, CML03dynamical}. However, the understanding of such models often relies on computer simulations and few models are amenable to rigorous mathematical analysis.

\subsection{Self-critical models based on percolation}

There have been several attempts to build mathematical models of self-organized criticality in the percolation setup.

\paragraph{Forest-fire models.}
One strategy to obtain a self-critical model consists in modifying a process of dynamical percolation in order to burn down the large or infinite clusters.
In a model defined by D\"urre~\cite{Durre06existence, Durre06uniqueness}, trees grow with rate~$1$ on each site of the square lattice~$\Z^d$, and lightnings strike each site occupied by a tree with rate~$\lambda$, which makes the cluster of this tree instantaneously become vacant.
Thus, each cluster is burnt with a rate proportional to its size.
If~$\lambda=0$, the state of this model at time~$t$ corresponds to Bernoulli site percolation with parameter~$p=1-e^{-t}$.
The introduction of the lightning parameter~$\lambda>0$ is intended to prevent the appearance of too large clusters, and the most interesting behaviour is expected in a limit~$\lambda\to 0$, where finite clusters are almost never hit by lightnings, whereas no infinite cluster can survive without being immediately destroyed.

The study of this model proved quite challenging, but notable rigorous results have been derived for the one-dimensional case~\cite{BJ05forest, BF}.
In~\cite{TothErdos09}, R\'ath and T\'oth studied a similar model, but on the complete graph with~$n$ vertices, and where instead of trees growing on the sites, edges are added with a certain rate.
Then, the most interesting regime is when each edge is added with rate~$1/n$ and lightnings strike on each site with rate~$\lambda(n)$, with~$n^{-1}\ll \lambda(n)\ll 1$.
In this regime, the authors proved that, under certain conditions on the initial configuration, the stationary distribution of the cluster sizes converges when~$n$ tends to infinity to a power-law distribution, which shows that this mean-field model exhibits a phenomenon of self-organized criticality.
Heuristically, this model behaves as if an infinite cluster was about to appear, but the lightnings prevent it from effectively forming.

In view of this, a natural idea is to try to build a model on an infinite graph where trees grow with rate~$1$ and any cluster of trees which becomes infinite is instantaneously destroyed.
Such a model has been studied on non-amenable graphs~\cite{AST14destr} and in high dimension~\cite{ADK15seven}, but it turns out that such a model does not exist in dimension~$2$, as proved in~\cite{KMS15}, confirming a conjecture of~\cite{BB04self}.
The argument is based on the instructive fact that there exists~$\delta>0$ such that, if one takes a supercritical site percolation configuration on~$\Z^2$, closes all the sites belonging to the infinite open cluster, and reopens each closed site with probability~$\delta$, then almost surely there is still no infinite cluster.
Thus, after the destruction of an infinite cluster, it takes some incompressible time to reconstitute an infinite cluster.
This stands in contradiction with the fact that, in a model where infinite clusters are instantaneously destroyed, there would be an accumulation of such destruction events just after having reached a critical density of trees.

These forest-fire models can be seen as continuous variants of the Drossel-Schwabl forest fire model~\cite{DS92forest}, where instead of instantaneously destroying the clusters, lightnings trigger fires which then spread progressively from one tree to its neighbours, and so on.
This Drossel-Schwabl model has received much attention in the physics literature, with the hope to prove that it exhibits self-organized criticality, in the sense of power-law distributions for the cluster sizes and the duration and the sizes of the fires.
But, despite its quite simple definition, this process has been mainly studied through computer simulations and heuristic reasoning, which gave contradictory predictions about its large-scale behaviour, and few mathematically rigorous results have been obtained (see~\cite{Grassberger02forest} and the references therein).

\paragraph{Frozen percolation.}
Instead of burning large or infinite clusters, another technique consists in freezing clusters when they reach a certain size.
Once an open cluster is frozen, the closed sites on its boundary are forced to remain closed forever, preventing further growth of this cluster.

One may wish to freeze clusters when they become infinite.
Aldous defined such a model on the infinite binary tree and showed that, as soon as half of the sites are open, the system gets blocked in a critical-like state, where finite clusters look like critical percolation clusters~\cite{Aldous00frozen}.

On the square grid~$\Z^2$, such a process with freezing of the infinite clusters does not exist (see~\cite{BT01signal}, which explains an argument of Benjamini and Schramm).
Instead, one may consider diameter-frozen percolation, where clusters are frozen when their diameter exceeds~$N$~\cite{BLN12frozen} or volume-frozen percolation, where clusters freeze when they contain more than~$N$ vertices~\cite{BN17frozen, BKN18frozen}.
Then, some interesting properties arise in the~$N\to\infty$ limit.
On the binary tree, one recovers the behaviour observed by Aldous when only infinite clusters were frozen~\cite{BKN12frozen}. In diameter-frozen percolation on~$\Z^2$, when~$N\to\infty$ most frozen clusters freeze in a near-critical window around the critical time, and these clusters tend to look like critical percolation clusters~\cite{Kiss15frozen}.
Surprisingly, in the diameter-frozen case, this phenomenon of self-organized criticality turns out to be quite sensible to the rule imposed on the boundary of the frozen clusters (namely, the behaviour changes when one does not close the sites on the boundary of a frozen cluster~\cite{BergNolin17}).

In~\cite{Rath09mffp}, a mean-field variant of frozen percolation is studied, where clusters are frozen when they are hit by lightnings.
This model exhibits a similar behaviour to the mean-field forest-fire model described in~\cite{TothErdos09}, that we mentioned before.
For a large regime of the lightning rate, the process gets stuck in a state which looks like a critical Erd{\H o}s-{R}\'{e}nyi random graph, where unfrozen clusters look like critical Galton-Watson trees.

\paragraph{Invasion percolation.}
Invasion percolation is another process constructed as a variant of percolation which exhibits a phenomenon of self-organized criticality.
For each edge~$e$ of the lattice~$\Z^d$, we draw a random variable~$\tau_e$ uniformly distributed on~$[0,1]$, the variables~$(\tau_e)_{e}$ being independent.
Invasion percolation can be defined as a random increasing sequence~$(\mathcal{G}_t)_{t\in\N}$ of subgraphs of~$\Z^d$.
At time~$t=0$, we take~$\mathcal{G}_0$ to be the graph containing only the origin, and no edge.
At each step~$t\in\N$, we look at the edges which connect a vertex in~$\mathcal{G}_t$ to a vertex outside of~$\mathcal{G}_t$, and to obtain~$\mathcal{G}_{t+1}$ we add to~$\mathcal{G}_t$ the edge among these edges for which~$\tau_e$ is minimal (and we also add the corresponding new vertex).
Eventually, this exploration process gives an infinite tree~\smash{$\mathcal{G}=\cup_{n\in\N}\mathcal{G}_t$}, which turns out to look like the so-called incipient infinite cluster of critical percolation~\cite{WW83invasion}.

Heuristically, this can be understood by considering the~$p$-clusters of the underlying dynamical percolation, that is to say the clusters formed of all the edges for which~$\tau_e\leqslant p$.
When the exploration process reaches an infinite~$p$-cluster, then it stays inside this cluster forever and no more edge with~$\tau_e>p$ can be explored.
Thus, progressively, the invasion percolation will reach infinite~$p$-clusters for values~$p>p_c$ more and more close to~$p_c$.
On the contrary, for all~$p<p_c$, the (finite)~$p$-cluster of the origin will eventually be entirely explored.

The above heuristics were made rigorous by~\cite{CCN85invasion, HPS99invasion}, which confirmed that the invaded region asymptotically looks like the incipient infinite cluster.
These results were later precised in the two-dimensional case~\cite{Zhang95invasion, Jarai03invasion, DS12invasion}, but this similarity between planar invasion percolation and critical percolation has some limits: in particular, both measures turn out to be mutually singular~\cite{DSV09invasion}, and the scaling limit of invasion percolation shows rotational and scaling invariance, but it is conjectured that it is not conformal invariant~\cite{GPS18invasion}.

\paragraph{Our approach.}
The model presented in this article is defined in a different way, which may seem less natural but has some advantages.
First, by defining a probability measure on the percolation configurations in a finite box, we avoid the risk to have an ill-defined process (as can be the case when one tries to burn or freeze the infinite clusters).
Also, instead of dynamically adjusting the percolation parameter (like in invasion percolation or in the algorithmic models studied in the physics literature), we directly define this parameter as a function of the percolation configuration.
This function encapsulates the feedback mechanism from the configuration onto the control parameter, which is a key ingredient of self-organized criticality.
Thus, to investigate the self-critical behaviour of our model, we only need to study this feedback function, and in particular its behaviour in a near-critical window (see paragraph~\ref{parZn}).
As we will see, this behaviour is related to challenging problems of finite-size scaling of the cluster sizes, some of which remain unsolved even for the square lattice~$\Z^2$~\cite{NearCriticalPerco}.
In the end, our toy model of self-organized criticality, which is intended to be as simple as possible in its definition, already requires some work and raises some interesting problems.

\subsection{Heuristics for the construction of the model}

Let us explain the heuristics which lead to the choice of the sequences~$F_n$ which appear in the definition of our model. The role of the function~$p_n$ is to introduce a negative feedback which assigns low values~$p_n(\omega)\ll p_c$ to percolation configurations which are \guillemets{typical} of the supercritical phase~$p>p_c$, and high values~$p_n(\omega)\gg p_c$ to configurations which are \guillemets{typical} of the subcritical phase~$p<p_c$. For example, if~$F_n=\abs{C_{max}}$, a configuration~$\omega$ with a largest cluster containing a number of vertices of order~$n^d$ will be assigned a very low value~$p_n(\omega)\ll p_c$. Yet, for this value of the parameter~$p$ in Bernoulli percolation, it is very unlikely to have such a large cluster, which will give~$\omega$ a very low weight in the measure~$\mu_n$. Indeed, we will show that under~$\mu_n$, configurations which are either \guillemets{typically subcritical} or \guillemets{typically supercritical} have a very low probability.
Therefore, the mass of~$\mu_n$ concentrates on configurations~$\omega$ with~$p_n(\omega)$ sufficiently close to~$p_c$, hence the self-critical behaviour of our model.
In fact, the difficult point is to show that the weight of the \guillemets{typically} supercritical or subcritical configurations is much smaller than the weight of the quasi-critical configurations (see paragraph~\ref{parZn}).

Note that our parameter~$a$ does not need to be finely tuned for our result to hold, showing the robustness of the construction.
Indeed, one could expect a different behaviour depending on whether~$a$ is smaller or larger than the so-called fractal dimension~$d_f$ of the incipient infinite cluster (see for example~\cite{Birth}), but~$p_n$ tends to~$p_c$ regardless of~$a$.
In fact, one can conjecture that, if~$a>d_f$, then our~$p_n$ will tend to~$p_c$ \guillemets{from above}, and the configurations in our model might look slightly supercritical, while they might look slightly subcritical when~$a<d_f$. 
This is plausible because the definition of our model more or less amounts to forcing the size of the largest cluster (or~$\abs{\Mn}$, or~$\abs{B_n^b}$) to be of order~$n^a$.

Our list of three models is of course not comprehensive, since many variants could be defined using the same approach. For example, the case of the largest cluster can be extended to the largest cluster in the torus, which means we can set periodic boundary conditions on the box~$\Lambda(n)$. In the model defined with~$B_n^b$ (point~($iii$) of theorem~\ref{thm_clusters_convergence}), one could consider the distribution of the cluster diameters instead of the cluster sizes, by setting
\begin{equation}
\label{Bntilde}
\widetilde{B}_n^b(\omega)\ =\ \abs{\Big\{\,x\in \Lambda(n)\ :\ \diam\,C(x,\,\omega)\geqslant n^b\,\Big\}}\,,
\end{equation}
which gives exactly the same convergence result, under the same conditions for~$a$ and~$b$, and with a similar estimate on the convergence speed.

\subsection{Outline of the article}

The proof of each case of theorem~\ref{thm_clusters_convergence} requires two main steps.
Recall the definition~(\ref{defmun}) of our model: for every percolation configuration~$\omega$, we have~$\mu_n(\omega)=\Proba_{p_n(\omega)}(\omega)/Z_n$.
The first step is to prove that~$\Proba_{p_n(\omega)}(\omega)$ tends to~$0$ exponentially fast and uniformly over all configurations~$\omega$ for which~$p_n(\omega)\notin[p_c-\varepsilon,\,p_c+\varepsilon]$, for a fixed~$\varepsilon>0$.
This step, described in paragraph~\ref{parExp}, relies on classical large deviation estimates far from the critical point.
But this step is not sufficient to prove our result, because of the normalization constant~$Z_n$.
Therefore, the second step is to obtain an adequate lower bound on this partition function~$Z_n$.
This step relies on a monotone coupling of percolation configurations and the search for a fixed point of a certain function (see paragraph~\ref{parZn}).
The crucial tool to construct this fixed point is a geometric surgery procedure, which allows to cut finite subgraphs of~$\Z^d$ in pieces of a given size, without closing too many edges.
This geometric lemma is proved in section~\ref{section_intermede}, after some standard definitions and notations are given in section~\ref{section_definitions}.
The last three sections~\ref{sectionCmax},~\ref{sectionMn} and~\ref{sectionBn} are devoted to the proofs of the three items of theorem~\ref{thm_clusters_convergence}, each section containing the two steps described above (first the exponential decay estimates far from~$p_c$, and then the lower bound on~$Z_n$).
Eventually, theorem~\ref{thm_clusters_vitesse} is proved at the end of section~\ref{sectionBn}.

\subsubsection{Exponential decay estimates far from~$p_c$}
\label{parExp}

Let~$\varepsilon$ be such that~$0<\varepsilon<\min(p_c,\,1-p_c)$.
We start with an upper bound on the right tail of the law of~$p_n$. To this end, we define
\begin{equation}
\label{defTn}
t_n^+\ =\ \Ceil{n^a\big(-\ln(p_c+\varepsilon)\big)}\,.
\end{equation}
Grouping the configurations according to the value of~$F_n$, we can write
\begin{align*}
\mu_n\big(p_n> p_c+\varepsilon\big)
\ &=\ \frac{1}{Z_n}\sum_{\omega\in \acc{0,1}^{\En}}{\mathbb{1}_{\acc{p_n(\omega)> p_c+\varepsilon}}\Proba_{p_n(\omega)}(\omega)}
\ =\ \frac{1}{Z_n}\sum_{\omega\in \acc{0,1}^{\En}}{\mathbb{1}_{\acc{F_n(\omega)< t_n^+}}\Proba_{p_n(\omega)}(\omega)}\\
\ &=\ \frac{1}{Z_n} \sum_{t=0}^{t_n^+-1}{\ \sum_{\omega\in \acc{0,1}^{\En}}{\mathbb{1}_{\acc{F_n(\omega)=t}}\Proba_{p_n(\omega)}(\omega)}}
\ =\ \frac{1}{Z_n}\sum_{t=0}^{t_n^+-1}{\Proba_{\varphi_n(t)}\Big(F_n=t \Big)}\numberthis\label{munsurexact}\\
\ &\leqslant\ \frac{1}{Z_n}\sum_{t=0}^{t_n^+-1}{\Proba_{\varphi_n(t)}\Big(F_n< t_n^+\Big)}\,.
\end{align*}
Yet, the variables~$F_n$ are increasing, whence
\begin{equation}
\label{munsur}
\mu_n\big(p_n> p_c+\varepsilon\big)
\ \leqslant\ \frac{n^d}{Z_n}\Proba_{p_c+\varepsilon}\Big(F_n<\big(-\ln(p_c+\varepsilon)\big)n^a\Big)\,.
\end{equation}
Similarly, we can show that
\begin{equation}
\label{munsous}
\mu_n\big(p_n< p_c-\varepsilon\big)
\ \leqslant\ \frac{n^d}{Z_n}\Proba_{p_c-\varepsilon}\Big(F_n>\big(-\ln(p_c-\varepsilon)\big)n^a\Big)\,.
\end{equation}
Therefore, the first step is to obtain exponential decay estimates for
\begin{equation}
\label{probamun}
\Proba_{p_c+\varepsilon}\Big(F_n<\big(-\ln(p_c+\varepsilon)\big)n^a\Big)
\qquadet
\Proba_{p_c-\varepsilon}\Big(F_n>\big(-\ln(p_c-\varepsilon)\big)n^a\Big)
\end{equation}
with~$\varepsilon>0$ fixed.
This is done in subsections~\ref{subCmaxsous} and~\ref{subCmaxsur} in the case of~$F_n=\abs{C_{max}}$ (case~($i$) of theorem~\ref{thm_clusters_convergence}), in subsections~\ref{subMnsous} and~\ref{subMnsur} for~$F_n=\abs{\Mn}$ (case~($ii$) of theorem~\ref{thm_clusters_convergence}) and~\ref{subBnsous} and~\ref{subBnsur} with~$F_n=\abs{B_n^b}$ (case~($iii$) of theorem~\ref{thm_clusters_convergence}).
The estimates we obtain there are quite standard and follow from classical results in the literature about the behaviour of the cluster sizes in the subcritical and supercritical phases.

\subsubsection{Lower bound on the partition function}
\label{parZn}

The second step, which is the crucial and more interesting step, is to obtain a lower bound on the partition function~$Z_n$.
Indeed, to show that~(\ref{munsur}) and~(\ref{munsous}) tend to~$0$ as~$n$ tends to infinity, one must not only show that the two terms in~(\ref{probamun}) are small enough, but also that~$Z_n$ is not too small.
To obtain this lower bound, we rewrite the partition function as
$$Z_n\ =\ \sum_{\omega\in\acc{0,1}^{\En}}{\Proba_{p_n(\omega)}(\omega)}
\ =\ \sum_{t=0}^{n^d}{\sum_{\substack{\omega\in\acc{0,1}^{\En}\\F_n(\omega)=t}}{\Proba_{\varphi_n(t)}(\omega)}}
\ =\ \sum_{t=0}^{n^d}{\Proba_{\varphi_n(t)}{\big(F_n=t\big)}}\,.$$
To make this expression more concrete, we construct a decreasing coupling~$\omega(0)\geqslant\omega(1)\geqslant\cdots\geqslant\omega(n^d)$ of percolation configurations, such that for every~$t\in\acc{0,\,\ldots,\,n^d}$, the configuration~$\omega(t)$ is distributed according to~$\sim\Proba_{\varphi_n(t)}$.
Then~$Z_n$ rewrites as
\begin{equation}
\label{Cmax_expression_Z_n}
Z_n\ =\ \sum_{t=0}^{n^d}{\Proba\Big(F_n\big(\omega(t)\big)=t\Big)}
\ =\ \Proba\Big(\,\exists t\in\acc{0,\,\ldots,\,n^d}\quad F_n\big(\omega(t)\big)=t\,\Big)\,.
\end{equation}
Hence, the partition function~$Z_n$ is equal to the probability that the random non-increasing function~$t\mapsto F_n\big(\omega(t)\big)$ admits a fixed point.
This leads us to build the coupling step by step, and to consider a (random) stopping time~$T$ located just before this function goes under the first bisector (see figure~\ref{PointFixe}).
We then obtain a lower bound on the probability that the next steps of the coupling lead to a fixed point.
\begin{figure}
\begin{center}
\begin{tikzpicture}
\draw[->] (0,0) node[below left]{$0$} to (0,4) node[left]{$n^d$} to (0,4.5) node[right]{$F_n\big(\omega(t)\big)$};
\draw (-.1,4) to (.1,4);
\draw[->] (0,0) to (4,0) node[below]{$n^d$} to (4.5,0) node[right]{$t$};
\draw (4,-.1) to (4,.1);
\draw (0.5,3.95) node{$\bullet$};
\draw (1.0,3.85) node{$\bullet$};
\draw (1.5,3.65) node{$\bullet$};
\draw[gray] (3.0,1.1) node{$\bullet$};
\draw[gray] (3.5,0.4) node{$\bullet$};
\draw[gray] (4.0,0.2) node{$\bullet$};
\draw[dashed, red!50!black, thick] (0,0) to (3.3,3.3);
\draw[red!50!black] (2.5,2.5) node{$\bullet$};
\draw[red!50!black] (1.5,1.1) node[rotate=45]{$F_n\big(\omega(t)\big)=t$};
\draw[blue!70!black] (2.0,3.3) node{$\bullet$} node[above right] {$F_n\geqslant t$ but \guillemets{close} to~$t$};
\draw[blue!70!black,thick] (2,-0.1) -- node[midway,below]{$t=T$} (2,0.1); 
\draw[blue!70!black,->] (2.1,3.2) to[bend left] (2.5,2.6);
\end{tikzpicture}
\end{center}
\caption{\label{PointFixe}The partition function~$Z_n$ may be expressed as the probability that the random function~$t\mapsto F_n\big(\omega(t)\big)$ admits a fixed point. This allows for the construction of a scenario where we can force such a fixed point to appear, with a reasonable probabilistic cost.}
\end{figure}
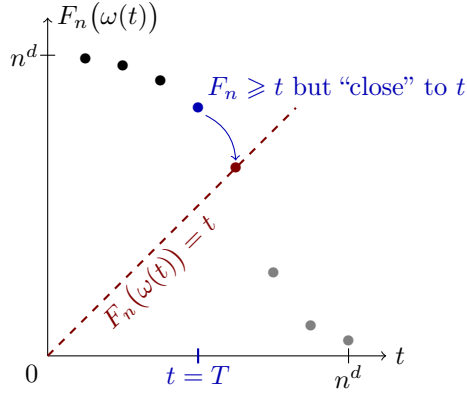

Because this instant~$T$ when we try to force a fixed point typically occurs for a percolation parameter close to~$p_c$, the classical estimates available in subcritical or supercritical percolation are of no use.
Indeed, we need to study the behaviour of~$F_n$ as~$p$ decreases towards~$p_c$, and to show that~$F_n$ does not vary too abruptly close to the critical point.
Our problem is therefore closely related to a question of finite-size scaling, i.e., the behaviour of the model when one takes~$n\rightarrow\infty$ and~$p\rightarrow p_c$ simultaneously (see~\cite{Birth, NearCriticalPerco}).

Yet, we are able to bypass the use of (unproven) scaling laws thanks to the geometric argument of section~\ref{section_intermede}, which is quite general and does not rely on the near-critical behaviour of~$F_n$.
Roughly speaking, this geometric result indicates that to cut a piece of a precise size out of a subgraph of~$\Z^d$ of size~$N$, one only needs to close~\smash{$O(N^{(d-1)/d})$} edges.
This geometric argument allows us to implement a surgery procedure on the configuration~$\omega(T)$ which leads to a fixed point by forcing a reasonable number of edges to be closed in the subsequent steps of the coupling.
The surgery procedure is different for each of the three considered models, but the core ingredient is always this graph separation result.

\begin{remark}
An important goal is to build a similar model of self-organized criticality associated with the Ising model.
A natural strategy consists in adapting the results presented here to the FK percolation model.
However, a major complication arises with the FK model.
Indeed, in a dynamical coupling of the FK processes, there is already a phenomenon of self-organized criticality in the way the edges become open when one approaches the critical point from below~\cite{NearCriticalFKIsing}.
Whereas in dynamical Bernoulli percolation, the opening times of the edges are independent, in FK percolation this independence property is lost, and groups of edges tend to become open simultaneously when~$p$ becomes close to~$p_c$.
As a consequence, our construction of the fixed point using the geometric surgery procedure does not work any more in FK percolation, because it would require to control this phenomenon of simultaneous openings of edges, which is not yet well understood.
Yet, in the article~\cite{Ising2DSOC}, we have
managed to bypass this problem in the particular setting of the planar FK-Ising model, using the estimates
about the near-critical regime proved by~\cite{CerfMessikh2011} in this context.
\end{remark}

\section{Definitions and notations}
\label{section_definitions}

\subsection{The box}

We fix an integer~$d\geqslant 2$ for the whole article. Let~$\Ed$ be the set of edges between nearest neighbours of~$\Z^d$ :
$$\Ed\ =\ \Big\{\,\acc{x,y}\subset\Z^d\ :\ \norme{x-y}_1=1\,\Big\}\,.$$
Let~$n\geqslant 1$. Let us consider the box centered at~$0$ and containing~$n^d$ vertices,
$$\Lambda(n)\ =\ \left[-\frac{n}{2},\,\frac{n}{2}\right[^d\cap\Z^d\ =\ \acc{-\Ent{\frac{n}{2}},\,\ldots,\,\Ent{\frac{n-1}{2}}}^d\,.$$
For~$V\subset\Z^d$ a set of vertices, we write
$$\Edge{V}\ =\ \Big\{\,\acc{x,y}\subset V\ :\ \norme{x-y}_1=1\,\Big\}$$
for the set of edges in~$\Ed$ connecting two vertices of~$V$, and we write in particular~$\En=\Edge{\Lambda(n)}$.
The inner boundary of the box~$\Lambda(n)$ will be denoted
$$\partial\Lambda(n)\ =\ \Big\{\,x\in\Lambda(n)\ :\ \exists\,y\in\Z^d\backslash\Lambda(n)\quad \norme{x-y}_1=1\,\Big\}\,.$$

\subsection{Bernoulli percolation}

For~$0\leqslant p\leqslant 1$, on the space~\smash{$\acc{0,1}^{\Ed}$} equipped with the~$\sigma$-field generated by events depending on finitely many edges, let~$\Proba_p$ be the product measure such that the state of each edge follows a Bernoulli law of parameter~$p$. An element~$\omega:\Ed\rightarrow\acc{0,1}$ is called a percolation configuration. Edges~$e\in\Ed$ such that~$\omega(e)=1$ are said open in~$\omega$, and the other edges are said closed in~$\omega$. Under the law~$\Proba_p$, each edge is open with probability~$p$ and the states of different edges are independent of each other.
For any configuration~$\omega:\Ed\rightarrow\acc{0,1}$ and any edge~$e\in\Ed$, we will write
$$
\omega_e\ :\ f\in\Ed\ \longmapsto\ \left\{
\begin{aligned}
&0&\text{ if }f=e\,,\\
&\omega(f)&\text{ otherwise}
\end{aligned}
\right.
$$
for the configuration obtained from~$\omega$ by closing the edge~$e$. Similarly, for any configuration~$\omega:\Ed\rightarrow\acc{0,1}$ and any set of edges~$H\subset\Ed$, we will write
$$
\omega^H\ :\ f\in\Ed\ \longmapsto\ \left\{
\begin{aligned}
&1&\text{ if }f\in H\,,\\
&\omega(f)&\text{ otherwise}
\end{aligned}
\right.
\qquadet
\omega_H\ :\ f\in\Ed\ \longmapsto\ \left\{
\begin{aligned}
&0&\text{ if }f\in H\,,\\
&\omega(f)&\text{ otherwise}
\end{aligned}
\right.
$$
for the configurations obtained from~$\omega$ by opening or closing all the edges of~$H$. These notations naturally extend to configurations~$\omega:\En\rightarrow\acc{0,1}$ on the edges of the box~$\Lambda(n)$.

\subsection{Clusters}

Let~$\omega:\Ed\rightarrow\acc{0,1}$ be a percolation configuration on~$\Z^d$. For~$x,y\in\Z^d$, we write~\smash{$x \connecte y$}
if there exists a path of open edges in the configuration~$\omega$ joining~$x$ and~$y$. For~$x\in\Z^d$, we will write
$$C(x)\ =\ C(x,\,\omega)\ =\ \acc{\,y\in\Z^d\ :\ x \connecte y\,}$$
for the connected component of~$x$, which is called the cluster of~$x$ in~$\omega$. If~$x\in\Z^d$ and~$Y\subset\Z^d$, we write
$$x \connecte Y\quad\Longleftrightarrow\quad \exists\, y\in Y\quad x \connecte y\,.$$
All these notations naturally extend to percolation configurations restricted to the box~$\Lambda(n)$. Thus, for~$\omega:\En\rightarrow\acc{0,1}$ and~$x\in\Lambda(n)$, we will write~$C(x,\,\omega)$ (or~$C(x)$) for the set of the vertices in~$\Lambda(n)$ which are connected to~$x$ in~$\Lambda(n)$ by an open path in the configuration~$\omega$. When it is not clear whether we consider paths which stay in the box or not, for example if~$\omega$ is defined on~$\Ed$, we will specify~$C_{\Lambda(n)}(x)$ to denote the set of the vertices which are connected to~$x$ by an open path with all its intermediate vertices belonging to~$\Lambda(n)$, i.e., the cluster of~$x$ in the configuration restricted to~$\En$.

\begin{figure}[hbtp]
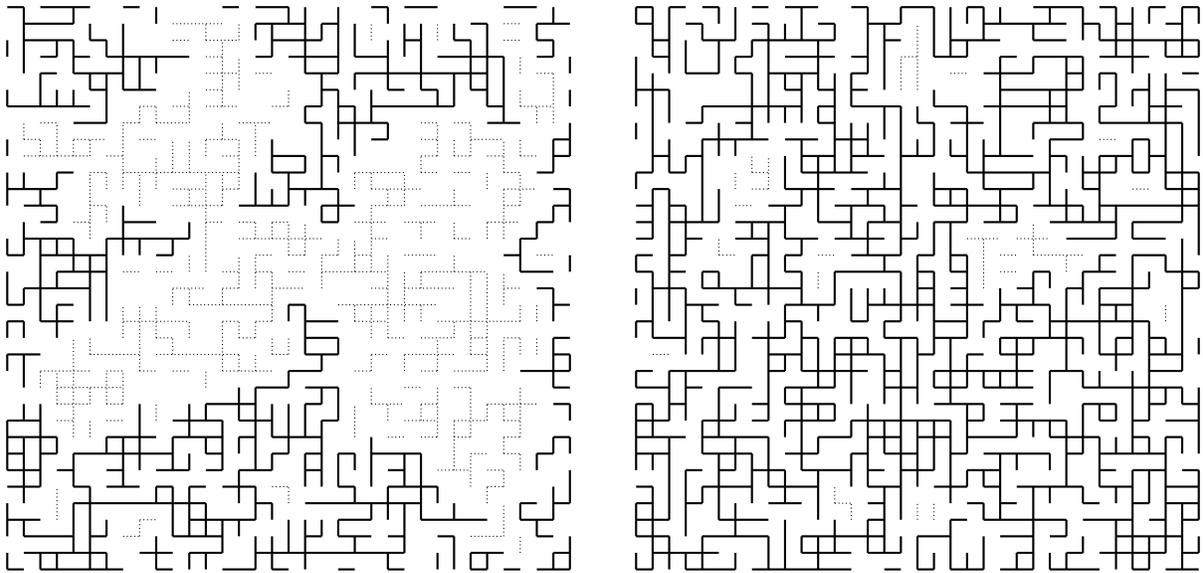

\begin{center}

\end{center}
\caption{Percolation in the box~$\Lambda(35)$ with, left,~$p=0.48$ and right,~$p=0.52$. Open edges connected to the boundary of the box by an open path are drawn in solid lines, while other open edges are in dotted lines.}
\end{figure}

For a percolation configuration~$\omega:\En\rightarrow\acc{0,1}$ in the box~$\Lambda(n)$, we will denote by~$C_{max}(\omega)$, or sometimes~$C_{max}(\Lambda(n))$, the largest cluster in~$\omega$, speaking in terms of the number of vertices.
In case of equality between several maximal clusters, we choose one of them with an arbitrary order on subsets of~$\Lambda(n)$. For~$\omega:\En\rightarrow\acc{0,1}$, we also define
$$\mathcal{M}_n(\omega)\ =\ \Big\{\,x\in\Lambda(n)\ :\ x\stackrel{\omega}{\longleftrightarrow}\partial\Lambda(n)\,\Big\}
\qquadet
B_n^b(\omega)\ =\ \Big\{\,x\in \Lambda(n)\ :\ \abs{C_{\Lambda(n)}(x,\,\omega)}\geqslant n^b\,\Big\}\,,$$
where~$b>0$ is a fixed parameter. Given~$p\in[0,1]$, let
$$\theta(p)\ =\ \Proba_p\big(\abs{C(0)}=\infty\big)$$
be the probability that the origin lies in an infinite open cluster in a percolation configuration drawn according to~$\Proba_p$. We will write~$p_c$ for the critical point of Bernoulli percolation in dimension~$d$, defined by
$$p_c\ =\ \inf\Big\{\,p\in[0,1]\ :\ \theta(p)>0\,\Big\}\,.$$

\section{Geometrical interlude}
\label{section_intermede}

\subsection{Main result}

The purpose of this section is to show the following geometric inequality, which one could sum up as \guillemets{separating a cluster of a given size in a graph~$(V,\,E)$ requires at most~\smash{$O(\abs{V}^{(d-1)/d})$} edges}.

\begin{lemma}
\label{lemme_boucher}
There exists a constant~$K=K(d)$ such that, for any finite connected subgraph~$G=(V,\,E)$ of~$(\Z^d,\,\Ed)$, for any vertex~$x\in V$ and for any integer~$m$ such that~$1\leqslant m\leqslant \abs{V}$, there exists a subset~$E_0\subset E$ of edges of~$G$ with cardinality
$$\abs{E_0}\ \leqslant\ K\abs{V}^{\frac{d-1}{d}}$$
such that the connected component of~$x$ in the graph~$(V,\,E\backslash E_0)$ contains exactly~$m$ vertices.
\end{lemma}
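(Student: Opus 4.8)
My first move is to reformulate: it suffices to produce a set $W\subseteq V$ with $x\in W$, $|W|=m$, such that $G[W]$ is connected and the edge boundary $\partial_E W:=\{e\in E:\ e\text{ has one endpoint in }W\text{ and one in }V\setminus W\}$ has at most $K|V|^{(d-1)/d}$ elements. Indeed, taking $E_0:=\partial_E W$, the set $W$ becomes a connected component of $(V,E\setminus E_0)$ and it is the one containing $x$, hence of size exactly $m$. So the whole task is: for every finite connected $G\subseteq(\Z^d,\Ed)$, every $x$, and every admissible $m$, build a connected vertex set of prescribed size $m$ through $x$ with small edge boundary.

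\textbf{A sweeping construction, by induction on the dimension.} In dimension $1$ a finite connected subgraph of $\Z$ is an interval, one takes for $W$ a sub-interval of length $m$ containing $x$, whose edge boundary has at most two edges, so $K(1)=2$ works. For the inductive step I use that $V$ lies in a box of side lengths $\ell_1,\dots,\ell_d$ with $\ell_1\cdots\ell_d\geqslant N:=|V|$, so $\ell_i\geqslant N^{1/d}$ for some coordinate $i$; slicing $V$ by the value of this coordinate yields nonempty slices $V^{(1)},\dots,V^{(L)}$, $L=\ell_i\geqslant N^{1/d}$, with no edge between non-consecutive slices and at most $\min\big(|V^{(t)}|,|V^{(t+1)}|\big)$ edges between $V^{(t)}$ and $V^{(t+1)}$ (a partial matching in the direction $e_i$); in particular the slices have average size at most $N^{(d-1)/d}$. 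For each $t$ (large enough that the first $t$ slices reach $x$) I consider the connected component $W_t$ of $x$ in $G\big[V^{(1)}\cup\dots\cup V^{(t)}\big]$. The sequence $(|W_t|)$ increases from $\geqslant 1$ up to $N$, and — this is why one works with components rather than with full unions of slices — since $W_t$ is itself a connected component of the first $t$ slices, \emph{every edge leaving $W_t$ leaves the first $t$ slices}, whence $|\partial_E W_t|\leqslant\min\big(|V^{(t)}|,|V^{(t+1)}|\big)$: only one transverse cut, controlled by the thinner of the two frontier slices.

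\textbf{Hitting the prescribed size.} Let $t^\star$ be the first index with $|W_{t^\star}|\geqslant m$. If $|W_{t^\star}|=m$ then $W_{t^\star}$ already works, provided its frontier slices are thin. Otherwise I set $W:=W_{t^\star-1}\cup A$, where $A$ has size exactly $m-|W_{t^\star-1}|$, is attached to $W_{t^\star-1}$ through one matching edge, and has small boundary. In the typical case $A$ can be chosen inside the single new slice $V^{(t^\star)}$, which is a subset of $\Z^{d-1}$ with at most $N$ vertices, so the induction hypothesis in dimension $d-1$ applied to it produces such an $A$ with in-slice boundary at most $K(d-1)\,|V^{(t^\star)}|^{(d-2)/(d-1)}\leqslant K(d-1)\,N^{(d-1)/d}$, using $\tfrac{d-2}{d-1}\leqslant\tfrac{d-1}{d}$. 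The edge boundary of $W$ is then bounded by the transverse cut between slices $t^\star-1$ and $t^\star$, plus the edges leaving $A$ transversally, plus the in-slice boundary of $A$, each of order $N^{(d-1)/d}$.

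\textbf{The main obstacle.} The delicate term is the transverse cut between the already-built piece $W_{t^\star-1}$ and the slice $V^{(t^\star)}$, namely $\min\big(|V^{(t^\star)}|,|V^{(t^\star-1)}|\big)$: this is harmless when one of the two frontier slices is thin, but a priori two consecutive slices could both be far larger than $N^{(d-1)/d}$; a closely related difficulty is the \emph{merging} phenomenon, where adding the slice $V^{(t^\star)}$ absorbs into $W$ a large component of the first $t^\star-1$ slices, so that $A$ cannot be taken inside one slice. Overcoming these points is the real work. The available slack is that fat slices — those of size more than $2N^{(d-1)/d}$ — number fewer than $\tfrac12 N^{1/d}<\tfrac12 L$, so thin slices are a strict majority, and one can try to arrange that the block of slices kept in full in $W$ begins and ends at thin slices, paying only a controlled correction to $|W|$ and declining to include the merged components. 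Where even this fails locally, one recurses on the offending block of consecutive fat slices, which is a sub-slab of bounded width in the direction $e_i$ and so is handled by the same sweeping argument run along another coordinate, with one fewer ``long'' direction available; the recursion terminates in boundedly many steps, and a convergent geometric series in the recursion depth (ratio $2^{-(d-1)/d}<1$) produces the final constant $K(d)$. Getting this interface bookkeeping right, rather than the routine estimates on slice sizes and on the adjustment set $A$, is the crux of the proof.
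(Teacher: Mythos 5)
Your reduction to finding a connected set $W\ni x$ of size exactly $m$ with small edge boundary is fine, the slicing observations (partial matching between consecutive slices, $|\partial_E W_t|\leqslant\min(|V^{(t)}|,|V^{(t+1)}|)$, average slice size $N^{(d-1)/d}$) are correct, and the dimension-$1$ base case is trivial. But the proof stops exactly where you say the ``real work'' lies, and that gap is not a matter of routine bookkeeping: it is the whole difficulty. Concretely, (i) nothing forces the two frontier slices at $t^\star$ to be thin, so the transverse cuts you must pay are only bounded by slice sizes, which can be of order $N$; (ii) in the merging case the increment $|W_{t^\star}|-|W_{t^\star-1}|$ can come almost entirely from old components absorbed through the new slice, so the adjustment set $A$ cannot live in a single $(d-1)$-dimensional slice and the dimension-reduction induction does not apply as stated (note also that a slice need not be connected, so even in the ``typical'' case you must work in the component of the attachment vertex and ensure it is large enough, which is again exactly what merging destroys); (iii) the repair you sketch --- recurse on an ``offending block'' of consecutive fat slices along another coordinate --- is not substantiated: such a block can contain almost all of $V$ (the bound is only that fat slices number $<\tfrac12 N^{1/d}$, not that they carry few vertices), so there is no guaranteed size reduction per recursion level, the claimed geometric series with ratio $2^{-(d-1)/d}$ has no justification in your scheme, and the interaction between the already-built $W_{t^\star-1}$, the excluded merged components, and the recursive call is never specified. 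As written, the argument proves the lemma only under an unverified ``thin frontier, no merging'' assumption.

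For comparison, the paper avoids the exact-size difficulty by splitting the problem in two. A first lemma (the ``butcher'') only asks to disconnect $G$ into components of size at most $\lceil|V|/2\rceil$ using $O(|V|^{(d-1)/d})$ edges; this needs no exactness and is proved by repeatedly removing one cheap slice per ``long'' direction to shrink the diameter by a factor $2/3$, summing a genuine geometric series in the number of iterations. A second lemma (the ``surgeon'') then reopens the removed edges one at a time starting from the component of $x$, stops at the first moment the component reaches size $\geqslant m$, and recurses only on the last attached piece, whose size is at most half of $|V|$ by the butcher's guarantee --- that halving is what legitimately produces the factor $a=2^{-(d-1)/d}$ and the convergent series for $K(d)$. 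If you want to salvage your sweep, the missing ingredient is precisely some mechanism that guarantees a constant-factor size reduction (or a cheap cut) at every recursive step; without it the construction does not close.
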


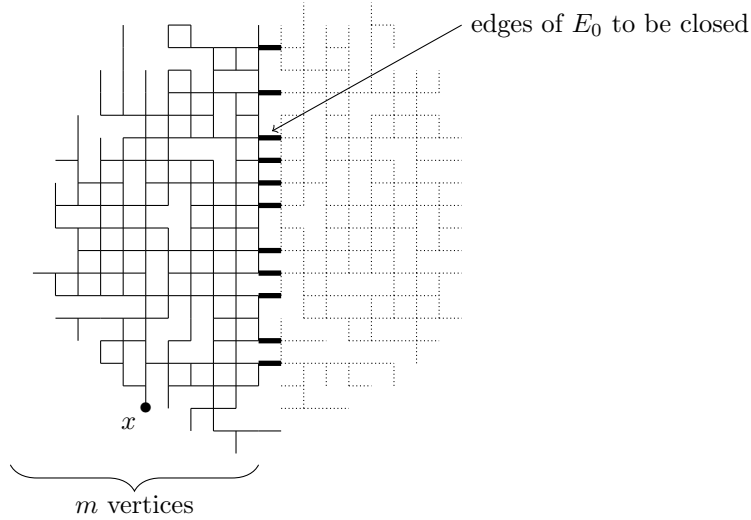
\begin{figure}[h!]
\begin{center}
\begin{tikzpicture}[scale=0.3]
\draw (1,8) to (2,8); \draw (2,6) to (3,6); \draw (2,7) to (3,7); \draw (2,7) to (2,8); \draw (2,8) to (3,8); \draw (2,10) to (3,10); \draw (2,10) to (2,11); \draw (2,11) to (3,11); \draw (2,11) to (2,12); \draw (2,13) to (3,13); \draw (3,5) to (3,6); \draw (3,6) to (4,6); \draw (3,7) to (4,7); \draw (3,7) to (3,8); \draw (3,8) to (4,8); \draw (3,8) to (3,9); \draw (3,9) to (4,9); \draw (3,9) to (3,10); \draw (3,10) to (4,10);  \draw (3,11) to (4,11); \draw (3,11) to (3,12); \draw (3,12) to (4,12); \draw (3,12) to (3,13); \draw (3,13) to (3,15); \draw (4,4) to (5,4); \draw (4,4) to (4,5); \draw (4,5) to (5,5); \draw (4,6) to (5,6); \draw (4,7) to (5,7); \draw (4,7) to (4,8); \draw (4,8) to (5,8); \draw (4,8) to (4,9); \draw (4,9) to (5,9); \draw (4,9) to (4,10); \draw (4,10) to (5,10); \draw (4,10) to (4,11); \draw (4,11) to (5,11); \draw (4,11) to (4,12); \draw (4,12) to (5,12); \draw (4,12) to (4,13); \draw (4,13) to (5,13); \draw (4,13) to (4,14); \draw (4,15) to (5,15); \draw (4,15) to (4,16); \draw (4,16) to (4,17); \draw (5,3) to (6,3); \draw (5,3) to (5,4); \draw (5,4) to (5,5); \draw (5,5) to (6,5); \draw (5,5) to (5,6); \draw (5,6) to (6,6); \draw (5,7) to (6,7); \draw (5,7) to (5,8); \draw (5,8) to (6,8); \draw (5,8) to (5,9); \draw (5,9) to (6,9); \draw (5,9) to (5,10); \draw (5,10) to (6,10); \draw (5,10) to (5,11); \draw (5,11) to (6,11); \draw (5,11) to (5,12); \draw (5,12) to (5,13); \draw (5,13) to (6,13); \draw (5,13) to (5,14); \draw (5,14) to (6,14); \draw (5,15) to (6,15); \draw (5,15) to (5,16); \draw (5,16) to (5,17); \draw (5,17) to (5,18); \draw (5,18) to (5,19); \draw (6,2) to (6,3); \draw (6,3) to (6,4); \draw (6,4) to (7,4); \draw (6,4) to (6,5); \draw (6,5) to (6,6); \draw (6,6) to (7,6); \draw (6,6) to (6,7); \draw (6,7) to (7,7); \draw (6,7) to (6,8); \draw (6,8) to (6,9); \draw (6,9) to (7,9); \draw (6,9) to (6,10); \draw (6,10) to (7,10); \draw (6,10) to (6,11); \draw (6,11) to (7,11); \draw (6,11) to (6,12); \draw (6,12) to (7,12); \draw (6,12) to (6,13); \draw (6,13) to (7,13); \draw (6,13) to (6,14); \draw (6,14) to (7,14); \draw (6,14) to (6,15); \draw (6,15) to (7,15); \draw (6,15) to (6,16); \draw (6,16) to (6,17); \draw (7,2) to (7,3); \draw (7,3) to (8,3); \draw (7,3) to (7,4); \draw (7,4) to (8,4); \draw (7,4) to (7,5); \draw (7,5) to (8,5); \draw (7,6) to (7,7); \draw (7,7) to (8,7); \draw (7,7) to (7,8); \draw (7,8) to (8,8); \draw (7,8) to (7,9); \draw (7,9) to (8,9); \draw (7,9) to (7,10); \draw (7,11) to (7,12); \draw (7,12) to (8,12); \draw (7,12) to (7,13); \draw (7,13) to (8,13); \draw (7,13) to (7,14); \draw (7,14) to (8,14); \draw (7,14) to (7,15); \draw (7,15) to (8,15); \draw (7,15) to (7,16); \draw (7,16) to (8,16); \draw (7,16) to (7,17); \draw (7,17) to (8,17); \draw (7,18) to (8,18); \draw (7,18) to (7,19); \draw (7,19) to (8,19); \draw (8,1) to (8,2); \draw (8,2) to (9,2); \draw (8,3) to (9,3); \draw (8,3) to (8,4); \draw (8,4) to (9,4); \draw (8,5) to (8,6); \draw (8,6) to (8,7); \draw (8,7) to (9,7); \draw (8,7) to (8,8); \draw (8,8) to (9,8); \draw (8,8) to (8,9); \draw (8,9) to (9,9); \draw (8,9) to (8,10); \draw (8,10) to (9,10); \draw (8,10) to (8,11); \draw (8,11) to (9,11); \draw (8,11) to (8,12); \draw (8,12) to (9,12); \draw (8,12) to (8,13); \draw (8,13) to (9,13); \draw (8,14) to (9,14); \draw (8,14) to (8,15); \draw (8,15) to (9,15); \draw (8,15) to (8,16); \draw (8,16) to (9,16); \draw (8,16) to (8,17); \draw (8,18) to (9,18); \draw (8,18) to (8,19); \draw (9,1) to (11,1); \draw (9,2) to (10,2); \draw (9,1) to (9,3); \draw (9,3) to (10,3); \draw (9,3) to (9,4); \draw (9,4) to (10,4); \draw (9,4) to (9,5); \draw (9,5) to (10,5); \draw (9,5) to (9,6); \draw (9,6) to (10,6); \draw (9,6) to (9,7); \draw (9,7) to (10,7); \draw (9,7) to (9,8); \draw (9,8) to (10,8); \draw (9,8) to (9,9); \draw (9,9) to (10,9); \draw (9,9) to (9,10); \draw (9,10) to (10,10); \draw (9,10) to (9,11); \draw (9,11) to (10,11); \draw (9,11) to (9,12); \draw (9,12) to (10,12); \draw (9,12) to (9,13); \draw (9,14) to (10,14); \draw (9,14) to (9,15); \draw (9,15) to (9,16); \draw (9,16) to (10,16); \draw (9,16) to (9,17); \draw (9,17) to (10,17); \draw (9,17) to (9,18); \draw (9,18) to (10,18); \draw (10,0) to (10,1); \draw (10,2) to (10,3); \draw (10,3) to (11,3); \draw (10,3) to (10,4); \draw (10,4) to (11,4); \draw (10,4) to (10,5); \draw (10,5) to (11,5); \draw (10,5) to (10,6); \draw (10,6) to (11,6); \draw (10,6) to (10,7); \draw (10,7) to (11,7); \draw (10,7) to (10,8); \draw (10,8) to (11,8); \draw (10,8) to (10,9); \draw (10,9) to (11,9); \draw (10,9) to (10,10); \draw (10,10) to (11,10); \draw (10,10) to (10,11); \draw (10,11) to (11,11); \draw (10,11) to (10,12); \draw (10,12) to (11,12); \draw (10,12) to (10,13); \draw (10,13) to (11,13); \draw (10,13) to (10,14); \draw (10,14) to (11,14); \draw (10,14) to (10,15); \draw (10,15) to (11,15); \draw (10,15) to (10,16); \draw (10,16) to (11,16); \draw (10,17) to (11,17); \draw (10,17) to (10,18); \draw (10,18) to (11,18);  \draw (11,1) to (12,1); \draw (11,3) to (11,4); \draw[line width=2pt] (11,4) to (12,4); \draw[line width=2pt] (11,5) to (12,5); \draw (11,5) to (11,6); \draw (11,6) to (11,7); \draw[line width=2pt] (11,7) to (12,7); \draw[line width=2pt] (11,8) to (12,8); \draw (11,8) to (11,9); \draw[line width=2pt] (11,9) to (12,9); \draw (11,9) to (11,10); \draw (11,10) to (11,11); \draw[line width=2pt] (11,11) to (12,11); \draw (11,11) to (11,12); \draw[line width=2pt] (11,12) to (12,12); \draw (11,12) to (11,13); \draw[line width=2pt] (11,13) to (12,13); \draw (11,13) to (11,14); \draw[line width=2pt] (11,14) to (12,14); \draw (11,14) to (11,15); \draw (11,15) to (11,16); \draw[line width=2pt] (11,16) to (12,16); \draw (11,16) to (11,17); \draw (11,17) to (11,18); \draw[line width=2pt] (11,18) to (12,18); \draw (11,18) to (11,19); \draw[densely dotted] (12,2) to (13,2); \draw[densely dotted] (12,3) to (13,3); \draw[densely dotted] (12,4) to (13,4); \draw[densely dotted] (12,4) to (12,5); \draw[densely dotted] (12,5) to (13,5); \draw[densely dotted] (12,5) to (12,6); \draw[densely dotted] (12,7) to (12,8); \draw[densely dotted] (12,8) to (13,8); \draw[densely dotted] (12,9) to (13,9); \draw[densely dotted] (12,9) to (12,10); \draw[densely dotted] (12,10) to (13,10); \draw[densely dotted] (12,10) to (12,11); \draw[densely dotted] (12,11) to (13,11); \draw[densely dotted] (12,11) to (12,12); \draw[densely dotted] (12,12) to (13,12); \draw[densely dotted] (12,12) to (12,13); \draw[densely dotted] (12,13) to (13,13); \draw[densely dotted] (12,13) to (12,14); \draw[densely dotted] (12,14) to (13,14); \draw[densely dotted] (12,14) to (12,15); \draw[densely dotted] (12,15) to (13,15); \draw[densely dotted] (12,16) to (13,16); \draw[densely dotted] (12,17) to (13,17); \draw[densely dotted] (12,17) to (12,18); \draw[densely dotted] (12,18) to (13,18); \draw[densely dotted] (12,18) to (12,19); \draw[densely dotted] (13,2) to (14,2); \draw[densely dotted] (13,2) to (13,3); \draw[densely dotted] (13,3) to (13,4); \draw[densely dotted] (13,4) to (14,4); \draw[densely dotted] (13,5) to (14,5); \draw[densely dotted] (13,6) to (14,6); \draw[densely dotted] (13,6) to (13,7); \draw[densely dotted] (13,7) to (14,7); \draw[densely dotted] (13,7) to (13,8); \draw[densely dotted] (13,8) to (14,8); \draw[densely dotted] (13,8) to (13,9); \draw[densely dotted] (13,9) to (14,9); \draw[densely dotted] (13,9) to (13,10);  \draw[densely dotted] (13,11) to (13,12); \draw[densely dotted] (13,12) to (14,12); \draw[densely dotted] (13,12) to (13,13); \draw[densely dotted] (13,13) to (13,14); \draw[densely dotted] (13,14) to (13,15); \draw[densely dotted] (13,15) to (13,16); \draw[densely dotted] (13,16) to (14,16); \draw[densely dotted] (13,16) to (13,17); \draw[densely dotted] (13,17) to (14,17); \draw[densely dotted] (13,17) to (13,18); \draw[densely dotted] (13,18) to (13,19); \draw[densely dotted] (13,19) to (13,20); \draw[densely dotted] (14,2) to (15,2); \draw[densely dotted] (14,3) to (14,4); \draw[densely dotted] (14,4) to (15,4); \draw[densely dotted] (14,6) to (15,6); \draw[densely dotted] (14,6) to (14,7); \draw[densely dotted] (14,7) to (15,7); \draw[densely dotted] (14,7) to (14,8); \draw[densely dotted] (14,8) to (15,8); \draw[densely dotted] (14,8) to (14,9); \draw[densely dotted] (14,9) to (15,9); \draw[densely dotted] (14,9) to (14,10); \draw[densely dotted] (14,10) to (15,10); \draw[densely dotted] (14,10) to (14,11); \draw[densely dotted] (14,11) to (15,11); \draw[densely dotted] (14,11) to (14,12); \draw[densely dotted] (14,12) to (14,13); \draw[densely dotted] (14,13) to (15,13); \draw[densely dotted] (14,13) to (14,14); \draw[densely dotted] (14,14) to (15,14); \draw[densely dotted] (14,14) to (14,15); \draw[densely dotted] (14,15) to (15,15); \draw[densely dotted] (14,15) to (14,16); \draw[densely dotted] (14,16) to (15,16); \draw[densely dotted] (14,16) to (14,17); \draw[densely dotted] (14,17) to (15,17); \draw[densely dotted] (14,17) to (14,18); \draw[densely dotted] (14,18) to (15,18); \draw[densely dotted] (14,18) to (14,19); \draw[densely dotted] (15,3) to (16,3); \draw[densely dotted] (15,3) to (15,4); \draw[densely dotted] (15,4) to (16,4); \draw[densely dotted] (15,5) to (16,5); \draw[densely dotted] (15,5) to (15,6); \draw[densely dotted] (15,6) to (16,6); \draw[densely dotted] (15,7) to (16,7); \draw[densely dotted] (15,8) to (16,8); \draw[densely dotted] (15,8) to (15,9); \draw[densely dotted] (15,9) to (16,9); \draw[densely dotted] (15,9) to (15,10); \draw[densely dotted] (15,10) to (16,10); \draw[densely dotted] (15,10) to (15,11); \draw[densely dotted] (15,11) to (16,11); \draw[densely dotted] (15,11) to (15,12); \draw[densely dotted] (15,12) to (15,13); \draw[densely dotted] (15,13) to (16,13); \draw[densely dotted] (15,13) to (15,14); \draw[densely dotted] (15,14) to (16,14); \draw[densely dotted] (15,14) to (15,15); \draw[densely dotted] (15,15) to (15,16); \draw[densely dotted] (15,16) to (16,16); \draw[densely dotted] (15,16) to (15,17); \draw[densely dotted] (15,17) to (15,18); \draw[densely dotted] (15,18) to (15,19); \draw[densely dotted] (16,4) to (17,4); \draw[densely dotted] (16,5) to (17,5); \draw[densely dotted] (16,5) to (16,6); \draw[densely dotted] (16,6) to (17,6); \draw[densely dotted] (16,6) to (16,7); \draw[densely dotted] (16,7) to (17,7); \draw[densely dotted] (16,8) to (17,8); \draw[densely dotted] (16,8) to (16,9); \draw[densely dotted] (16,9) to (17,9); \draw[densely dotted] (16,9) to (16,10); \draw[densely dotted] (16,10) to (17,10); \draw[densely dotted] (16,10) to (16,11); \draw[densely dotted] (16,11) to (17,11); \draw[densely dotted] (16,11) to (16,12); \draw[densely dotted] (16,12) to (17,12); \draw[densely dotted] (16,12) to (16,13); \draw[densely dotted] (16,13) to (17,13); \draw[densely dotted] (16,13) to (16,14); \draw[densely dotted] (16,14) to (17,14); \draw[densely dotted] (16,14) to (16,15); \draw[densely dotted] (16,15) to (17,15); \draw[densely dotted] (16,16) to (17,16); \draw[densely dotted] (16,16) to (16,17); \draw[densely dotted] (16,17) to (16,19); \draw[densely dotted] (16,19) to (17,19); \draw[densely dotted] (16,19) to (16,20); \draw[densely dotted] (17,3) to (17,4); \draw[densely dotted] (17,5) to (18,5); \draw[densely dotted] (17,6) to (18,6); \draw[densely dotted] (17,6) to (17,7); \draw[densely dotted] (17,7) to (18,7); \draw[densely dotted] (17,7) to (17,8); \draw[densely dotted] (17,8) to (18,8); \draw[densely dotted] (17,8) to (17,9); \draw[densely dotted] (17,9) to (17,10); \draw[densely dotted] (17,10) to (18,10); \draw[densely dotted] (17,10) to (17,11); \draw[densely dotted] (17,11) to (18,11); \draw[densely dotted] (17,11) to (17,13); \draw[densely dotted] (17,12) to (18,12);
\draw[densely dotted] (17,14) to (18,14); \draw[densely dotted] (17,14) to (17,15); \draw[densely dotted] (17,15) to (18,15); \draw[densely dotted] (17,15) to (17,16); \draw[densely dotted] (17,16) to (18,16); \draw[densely dotted] (17,16) to (17,17); \draw[densely dotted] (17,18) to (17,19); \draw[densely dotted] (18,4) to (18,5); \draw[densely dotted] (18,5) to (19,5); \draw[densely dotted] (18,5) to (18,6); \draw[densely dotted] (18,6) to (19,6); \draw[densely dotted] (18,6) to (18,7); \draw[densely dotted] (18,7) to (19,7); \draw[densely dotted] (18,7) to (18,8); \draw[densely dotted] (18,8) to (19,8); \draw[densely dotted] (18,8) to (18,9); \draw[densely dotted] (18,9) to (19,9); \draw[densely dotted] (18,9) to (18,10); \draw[densely dotted] (18,10) to (18,11); \draw[densely dotted] (18,11) to (19,11); \draw[densely dotted] (18,11) to (18,12); \draw[densely dotted] (18,12) to (19,12); \draw[densely dotted] (18,12) to (18,13); \draw[densely dotted] (18,13) to (19,13); \draw[densely dotted] (18,13) to (18,14); \draw[densely dotted] (18,14) to (19,14); \draw[densely dotted] (18,14) to (18,15); \draw[densely dotted] (18,15) to (19,15); \draw[densely dotted] (18,15) to (18,16); \draw[densely dotted] (18,16) to (19,16); \draw[densely dotted] (18,16) to (18,17); \draw[densely dotted] (19,6) to (20,6); \draw[densely dotted] (19,6) to (19,7); \draw[densely dotted] (19,7) to (19,8); \draw[densely dotted] (19,8) to (20,8); \draw[densely dotted] (19,9) to (20,9); \draw[densely dotted] (19,9) to (19,10); \draw[densely dotted] (19,10) to (20,10); \draw[densely dotted] (19,10) to (19,11); \draw[densely dotted] (19,11) to (20,11); \draw[densely dotted] (19,11) to (19,12); \draw[densely dotted] (19,12) to (20,12); \draw[densely dotted] (19,13) to (20,13); \draw[densely dotted] (19,14) to (20,14); \draw[densely dotted] (19,14) to (19,15); \draw[densely dotted] (19,16) to (19,17);
\draw[decorate, decoration={brace, mirror, amplitude=10pt}] (0,-0.5) to node[midway, yshift=-15pt]{$m$ vertices} (11,-0.5) ;
\draw (6,2) node{$\bullet$} node[below left]{$x$};
\draw[->] (20,19) node[right]{edges of~$E_0$ to be closed} to (11.6, 14.3);

\end{tikzpicture}
\end{center}
\caption{Closing the edges of~$E_0$ (drawn in thick lines) cuts the graph in several connected components, such that~$x$ lies in a component (drawn in normal lines) containing the required number of vertices. Lemma~\ref{lemme_boucher} states that, in dimension~$2$, the subset~$E_0$ can be chosen containing~$O\big(\sqrt{\abs{V}}\big)$~edges.}
\end{figure}

We decompose the proof of this lemma in two steps. In section~\ref{section_boucher}, we prove the \guillemets{butcher's lemma}, which  allows to cut a graph into small components, which may be too small, in particular the component of~$x$ might have a cardinality strictly smaller than the goal size~$m$. In section~\ref{section_chirurgien}, we prove the \guillemets{surgeon's lemma}, which involves an adequate algorithm to reopen some of the edges closed by the butcher's lemma in order to reach the goal size~$m$ for the cluster of~$x$.

\subsection{The \guillemets{butcher's lemma}}
\label{section_boucher}

We start with an upper bound on the number of edges that one needs to remove from a connected graph to divide it into pieces which are all smaller than half of the initial graph.

\begin{lemma}[The butcher's lemma]
\label{lemme_sandwich_jambon}
For every finite subgraph~$G=(V,\,E)$ of~$(\Z^d,\,\Ed)$, there exists a subset~$E_0\subset E$ of edges of~$G$ with cardinality
$$\abs{E_0}\ \leqslant\ 4^{d+1}d^2\abs{V}^{\frac{d-1}{d}}$$
such that any connected component of the graph~$(V,\,E\backslash E_0)$ contains at most~$\Ceil{\abs{V}/2}$ vertices.
\end{lemma}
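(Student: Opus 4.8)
The idea is that separating a graph embedded in $\Z^d$ into pieces of size at most $\Ceil{\abs{V}/2}$ can be done by "slicing" along a single axis direction, and a dimension count shows that the total number of edges crossed by the cut is $O(\abs{V}^{(d-1)/d})$. Let me sketch this.

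First I would set $N=\abs{V}$ and, for each coordinate direction $i\in\{1,\dots,d\}$ and each half-integer hyperplane $H$ orthogonal to the $i$-th axis, consider the "slab" of edges of $E$ crossing $H$ (i.e. edges $\{x,y\}\in E$ whose $i$-th coordinates differ, with $H$ between them). A natural first attempt: pick the direction $i$ along which $V$ has the smallest extent, reorder the hyperplanes $H_1,H_2,\dots$ orthogonal to the $i$-th axis by increasing coordinate, and successively cut along the first hyperplane after which the accumulated number of vertices exceeds $N/2$. That single cut produces two pieces, but one of them may still be too large, so one iterates. The key quantitative input is that the average (over the relevant family of parallel hyperplanes) of the number of edges crossing a hyperplane is controlled: if $V$ has extent $\ell_i$ in direction $i$, then $\sum_H \abs{\{\text{edges of }E\text{ crossing }H\}} = \abs{\{\text{edges of }E\text{ in direction }i\}} \le dN/2$ roughly (each edge crosses exactly one hyperplane orthogonal to its own direction), so there exists a hyperplane crossed by at most $dN/(2\ell_i)$ edges. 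Since $\prod_i \ell_i \ge$ (something like) $N$ by the isoperimetric/box-containment fact that $V\subset \Z^d$ has at least one direction with $\ell_{i_0}\ge N^{1/d}$, we can always find a direction and a hyperplane crossed by $O(N/N^{1/d}) = O(N^{(d-1)/d})$ edges.

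The main work is the recursion bookkeeping. After one cut I have pieces $V_1, V_2$ with $\abs{V_1}+\abs{V_2}=N$; I recurse on whichever piece has more than $\Ceil{N/2}$ vertices — but a single axis-cut can only guarantee to reduce the largest piece below $N/2$ if I am allowed to choose the hyperplane adaptively, so instead I would argue: repeatedly apply "find a cheap hyperplane in the big direction and cut," halting a branch once its size drops to $\le \Ceil{N/2}$. Each cut on a piece $W$ costs $O(\abs{W}^{(d-1)/d})$ edges, and the pieces at a given "level" are disjoint, so by concavity of $t\mapsto t^{(d-1)/d}$ the total cost at each level is at most $O(N^{(d-1)/d})$; since sizes at least halve every bounded number of levels, there are $O(\log N)$ levels — which is \emph{not} good enough. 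So the cleaner route, which I would actually take, is the one-shot construction: choose the direction $i_0$ with $\ell_{i_0}\ge N^{1/d}$, and among the $\ell_{i_0}$ hyperplanes orthogonal to it, there are $\ge \ell_{i_0}/2 \ge N^{1/d}/2$ of them; pick the one that \emph{best balances} the two sides (prefix sums of vertex counts pass through $N/2$), and note it is crossed by at most (average) $\le (dN/2)/(\ell_{i_0}/2) \le dN/N^{1/d} = dN^{(d-1)/d}$ edges. Wait — balancing and cheapness are two different selection criteria, so I cannot impose both at once. The fix: restrict to the sub-family of hyperplanes lying strictly between the "lower quartile" and "upper quartile" of the vertex mass; cutting along any of these already gives both pieces $\le \Ceil{N/2}$... no, that still only bounds one side.

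Let me state the plan honestly: I would iterate cuts but organize the recursion on a \emph{single} piece at a time (always the current largest), using at each step a hyperplane orthogonal to a direction of extent $\ge (\text{current size})^{1/d}$ chosen to push just past the half-mark, so each cut strictly more than halves... this is the delicate point. The honest resolution is that one cut along a well-chosen hyperplane in a long direction splits $W$ into two parts of sizes in $(\abs{W}/2 - \text{const}\cdot\abs{W}^{(d-1)/d}, \, \abs{W}/2 + \text{const}\cdot\abs{W}^{(d-1)/d})$ is \emph{not} automatic either. \textbf{The main obstacle is precisely this: guaranteeing with one axis-aligned cut that the larger side drops to $\le\Ceil{\abs{V}/2}$, while simultaneously paying only $O(\abs{V}^{(d-1)/d})$ edges.} I expect the paper resolves it by taking the \emph{median} hyperplane in the chosen direction (the one where the running vertex count first reaches $\Ceil{N/2}$): this automatically makes \emph{both} sides $\le\Ceil{N/2}$ (one side has $\ge\Ceil{N/2}$ so is done; the other side has $\le N-\text{something}$... one must be careful, but with the median the "before" side has $<\Ceil{N/2}$ and the "after" side has $\le N - (\text{before}+1) \le N-\Ceil{N/2}+\text{(size of the median slab in direction }i_0) \le \Ceil{N/2}$ provided the median hyperplane's own slab of vertices is small, which one ensures by instead summing the edge-crossing counts over ALL hyperplanes and picking the cheapest median-ish one). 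So: (1) reduce to finding, for each "bad" piece, a cheap near-median axis hyperplane via the averaging bound $\sum_{\text{hyperplanes}}(\text{edge crossings}) \le dN/2$ combined with $\ell_{i_0}\gtrsim N^{1/d}$; (2) check that the median hyperplane genuinely splits both sides $\le \Ceil{N/2}$; (3) do the disjointness + concavity sum over the recursion tree to collect the constant $4^{d+1}d^2$. Step (2) and pinning down the explicit constant is where I'd expect to spend the real effort; steps (1) and (3) are routine.
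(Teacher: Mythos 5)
Your plan hinges on the primitive you flag in step (2): a \emph{single} axis-aligned hyperplane slice that is simultaneously cheap (at most $O(\abs{V}^{(d-1)/d})$ crossing edges) and leaves both sides with at most $\Ceil{\abs{V}/2}$ vertices. This primitive does not exist in general, so the point you honestly left open is not a technical chore but the place where the approach breaks. Take $d=2$, $t=\Ceil{L^{2/3}}$, and let $V$ be the thickened plus sign $\big([-L,L]\times[-t,t]\big)\cup\big([-t,t]\times[-L,L]\big)$ with all nearest-neighbour edges, so that $\abs{V}\asymp Lt$ and the allowed budget is of order $\abs{V}^{1/2}\asymp L^{5/6}$. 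A slice separating two adjacent columns outside the central band $[-t,t]$ is cheap (about $2t$ crossing edges) but leaves on one side the whole vertical bar plus more than half of the horizontal bar, i.e.\ strictly more than $\Ceil{\abs{V}/2}$ vertices; a short count of the column masses shows that any slice meeting the halving requirement must separate two adjacent columns (or rows) of the central band, and every one of those is crossed by $2L+1$ edges, which is $\gg L^{5/6}$ (for the thin plus, $t=1$, no single slice meets the requirement at any cost). So ``cheapest median-ish hyperplane'' cannot work, and your fallback --- recursing on both pieces and summing by concavity --- gives the extra $\log\abs{V}$ factor you yourself noted, which the stated bound does not allow. (The lemma is of course still true for this set; one just has to use edges from several hyperplanes.)

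The missing idea, which is how the paper proceeds, is to avoid balancing the vertex mass altogether. The paper fixes $A=\abs{V}^{1/d}$ and proves by induction on $k$ that whenever $\diam V\leqslant(3/2)^k(A-1)$ one can cut with at most $2A^{d-1}+36d^2A^{d-1}$ edges into components of at most $\Ceil{A^d/2}$ vertices. At each stage, for every direction whose extent still exceeds $(3/2)^{k-1}(A-1)$, a pigeonhole over the middle third of that direction produces a slice crossed by at most $12d(2/3)^{k-1}A^{d-1}$ edges; these slices are removed, and only the \emph{largest} remaining component is kept for the recursion --- every discarded component automatically has at most $\abs{V}/2$ vertices, since it is disjoint from a component at least as large. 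The per-stage costs decay geometrically in $k$, so they sum to $O(A^{d-1})$ with no logarithm; and once the diameter is at most $A-1$, the set lies in a box of side $A$ and volume $\abs{V}$, so a slice through the middle of that box bounds every component by half the box volume, $\Ceil{A^d/2}=\Ceil{\abs{V}/2}$, by a pure volume count, with no control on where the vertex mass sits.
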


This separation lemma, which can be summarized by
\guillemets{cutting a graph in two requires~\smash{$O(\abs{V}^{(d-1)/d})$} edges}, 
was proved in~\cite{BenjaminiSeparation12}, corollary 3.3. For completeness, we present here a self-contained proof of this geometric result for the case of~$\Z^d$. The more general technique of~\cite{BenjaminiSeparation12} would make it possible to extend our result to more general graphs, but we choose here to restrict our presentation to the~$d$-dimensional square grid.
For~$x\in\Z^d$, we will write its coordinates~$x=(x_1,\,\ldots,\,x_d)$.
For any finite non-empty subset~$V\subset\Z^d$ and any~$i\in\acc{1,\,\ldots,\,d}$, we define
$$\diam_i\,V\ =\ \max\limits_{x\in V}\,x_i-\min\limits_{x\in V}\,x_i
\qquadet
\diam\,V\ =\ \max\limits_{1\leqslant i\leqslant d}\,\diam_i\,V\,.$$
If~$i\in\acc{1,\,\ldots,\,d}$ and~$m\in\Z$, then
$$T_{i,m}\ =\ \Big\{\,e=\acc{x,y}\in\Ed\ :\ x_i=m\text{ and }y_i=m+1\,\Big\}$$
will denote the slice of edges cutting~$\Z^d$ in two parts in the direction~$i$ between abscissa~$m$ and~$m+1$.
We first prove an auxiliary lemma.
\begin{lemma}
\label{lemme_recurrence_graphes}
For every~$k\in\N$ and for any real number~$A\geqslant 4$, given a subgraph~$G=(V,E)$ of~$(\Z^d,\,\Ed)$ such that~$\abs{V}\leqslant A^d$ and
$$\diam\,V\ \leqslant\ \left(\frac{3}{2}\right)^k(A-1)\,,$$
there exists a subset~$E_0\subset E$ of edges of~$G$ with cardinality 
$$\abs{E_0}\ \leqslant\ 2A^{d-1}+36d^2\left(1-\left(\frac{2}{3}\right)^k\right)A^{d-1}$$
such that any connected component of the graph~$(V,\,E\backslash E_0)$ contains at most~$\Ceil{A^d/2}$ vertices.
\end{lemma}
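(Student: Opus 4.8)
The plan is to argue by induction on $k$, the substance being an inductive step that removes a cheap family of axis-parallel slices so as to shrink the diameter by a factor $2/3$ in every coordinate direction.

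\textbf{Base case $k=0$.} Here $\diam V\leqslant A-1$, so $V$ sits inside an axis-parallel box with at most $A$ integer coordinates in each direction; hence every hyperplane $\{x_i=m\}$ meets $V$ in at most $A^{d-1}$ vertices and every slice $T_{i,m}$ carries at most $A^{d-1}$ edges of $G$, which (since $A\geqslant 4$) is at most $\Ceil{A^d/2}$. Ordering the vertices of $V$ by their first coordinate and following the cumulative counts, which jump by at most $A^{d-1}\leqslant\Ceil{A^d/2}$ across each hyperplane while $\abs{V}\leqslant A^d\leqslant 2\Ceil{A^d/2}$, one extracts at most two slices $T_{1,m}$, $T_{1,m'}$ whose removal cuts $V$ into at most three pieces, each of size at most $\Ceil{A^d/2}$. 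As $36d^2\big(1-(2/3)^0\big)=0$, this is the required bound $\abs{E_0}\leqslant 2A^{d-1}$.

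\textbf{Inductive step.} Assume the statement for $k$ and take $G=(V,E)$ with $\abs{V}\leqslant A^d$ and $\diam V\leqslant(3/2)^{k+1}(A-1)$. If $\diam_i V\leqslant(3/2)^k(A-1)$ for every $i$, the induction hypothesis applies directly (its bound is smaller). Otherwise, for each direction $i$ with $\diam_i V>(3/2)^k(A-1)$, call an integer $m$ \emph{central} if both $\{x\in V:x_i\leqslant m\}$ and $\{x\in V:x_i>m\}$ have $i$-diameter at most $(3/2)^k(A-1)$; since $V$ is connected its projection onto axis $i$ is a full interval, and a short count using $\diam_i V\leqslant\tfrac32(3/2)^k(A-1)$ shows there are at least $\tfrac12(3/2)^k(A-1)$ central positions. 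Each vertex of $V$ is the lower endpoint of at most one edge in direction $i$, so $\sum_m\abs{T_{i,m}\cap E}\leqslant\abs{V}\leqslant A^d$, and by averaging over central positions there is one, $m_i$, with
$$\abs{T_{i,m_i}\cap E}\ \leqslant\ \frac{2A^d}{(3/2)^k(A-1)}\ \leqslant\ \frac{8}{3}\left(\frac{2}{3}\right)^{k}A^{d-1}\,,$$
using $A-1\geqslant\tfrac34A$ for $A\geqslant 4$.

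Removing $\bigcup_i\big(T_{i,m_i}\cap E\big)$ then costs at most $\tfrac{8d}{3}(2/3)^kA^{d-1}\leqslant 12d^2(2/3)^kA^{d-1}$ edges, and every connected component of the remaining graph now has $i$-diameter at most $(3/2)^k(A-1)$ in each direction $i$ and at most $\abs{V}\leqslant A^d$ vertices, hence satisfies the hypothesis for $k$. The point that makes the recursion close is that \emph{at most one} of these components can have more than $\Ceil{A^d/2}$ vertices (two of them would together exceed $A^d\geqslant\abs{V}$); applying the induction hypothesis to that single large component, if any, and leaving the others untouched gives
$$\abs{E_0}\ \leqslant\ 12d^2\Big(\tfrac{2}{3}\Big)^{k}A^{d-1}+2A^{d-1}+36d^2\Big(1-\big(\tfrac{2}{3}\big)^{k}\Big)A^{d-1}\ =\ 2A^{d-1}+36d^2\Big(1-\big(\tfrac{2}{3}\big)^{k+1}\Big)A^{d-1}\,.$$
I expect the main obstacle to be the combinatorial bookkeeping in the inductive step: getting the count of central positions right, checking that after the \emph{global} cuts (each $T_{i,m_i}$ cutting all of $V$ at once) every component genuinely inherits the diameter bound in all $d$ directions, and handling the base case for small $A$ --- the hypothesis $A\geqslant 4$ being used precisely so that a single hyperplane never already holds more than $\Ceil{A^d/2}$ vertices and so that $\tfrac12(3/2)^k(A-1)\geqslant 1$, which is what guarantees a central slice exists. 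The geometric inputs (the box estimate and the averaging bound on $\abs{T_{i,m}\cap E}$) are otherwise routine.
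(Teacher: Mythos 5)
Your proof is correct and follows essentially the same route as the paper's: induction on $k$, removing a cheap axis-parallel slice (found by averaging/pigeonhole) in each direction whose diameter exceeds $(3/2)^k(A-1)$ so that every component's diameter shrinks by a factor $2/3$, then recursing on a single large component while the remaining components are automatically of size at most $\Ceil{A^d/2}$, with the same telescoping of constants. The only differences are cosmetic — you recurse on the unique oversized component rather than the maximal one, your base case uses a greedy choice of two slices instead of two middle cuts, and your appeal to connectedness of $V$ (not assumed in the statement) is in fact never needed, since the count of central positions uses only the diameter bound.
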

\begin{remark}
In the sequel, this lemma will only be used with~$A=\abs{V}^{1/d}$ but it will be helpful for the proof to keep this parameter~$A$ fixed rather than have it depending on the graph.
\end{remark}
\begin{proof}
Fix~$A\geqslant 4$. We will proceed by induction on~$k$, and therefore we start with the case~$k=0$.
Let~$G=(V,\,E)$ be a subgraph of~$(\Z^d,\,\Ed)$ such that~$\abs{V}\leqslant A^d$ and~$\diam\,V\leqslant A-1$. Without loss of generality, we can assume that~$V\subset\Lambda(\diam\,V+1)$. Let us choose 
$$E_0\ =\ E\cap\big(T_{1,-1}\cup T_{1,0}\big)\,,$$
whose cardinality satisfies
$$\abs{E_0}
\ \leqslant\ 2\big(\diam\,V+1\big)^{d-1}
\ \leqslant\ 2A^{d-1}\,.$$
If~$C\subset V$ is a connected component of~$(V,\,E\backslash E_0)$, then we have
$$\abs{C}
\ \leqslant\ \max\left(\Ent{\frac{\diam\,V}{2}},\,\Ent{\frac{\diam\,V+1}{2}}\right)\big(\diam\,V+1\big)^{d-1}
\ \leqslant\ \frac{\big(\diam\,V+1\big)^{d}}{2}
\ \leqslant\ \frac{A^d}{2}\,.$$
We now perform the induction step.
Take~$k\geqslant 1$ such that the result holds for~$k-1$. Let~$G=(V,E)$ be a subgraph of~$(\Z^d,\,\Ed)$ such that~$\abs{V}\leqslant A^d$ and
$$\diam\,V\ \leqslant\  \left(\frac{3}{2}\right)^k(A-1)\,.$$
We are going to trim the graph~$G$ to decrease its diameter by a factor~$2/3$. To this end, we will remove slices of edges in the directions~$i$ in which the diameter is \guillemets{too big}. Consider
$$\mathcal{I}\ =\ \acc{\,i\in\acc{1,\,\ldots,\,d}\ :\ \diam_i\,V > \left(\frac{3}{2}\right)^{k-1}(A-1)\,}\,,$$
and take~$i\in\mathcal{I}$. Without loss of generality, one can assume that~\smash{$\min\limits_{x\in V}\,x_i=0$}. By the pigeonhole principle, there exists an integer~$k_i$ satisfying
$$\Ent{\frac{\diam_i\,V}{3}}\ <\ k_i\ \leqslant\ 2\Ent{\frac{\diam_i\,V}{3}}\qquadet
\abs{E\cap T_{i,k_i}}\ \leqslant\ \frac{\abs{E}}{\Ent{\frac{\diam_i\,V}{3}}}\,.$$
We choose such a~$k_i$ and we write, recalling that~$A\geqslant 4$,
\begin{align*}
\Ent{\frac{\diam_i\,V}{3}}
\ &\geqslant\ \frac{\diam_i\,V}{3}-\frac{2}{3}\\
\ &\geqslant\ \frac{1}{3}\left(\frac{3}{2}\right)^{k-1}(A-1)-\frac{2}{3}\\
\ &=\ \frac{1}{9}\left(\frac{3}{2}\right)^{k-1}(A-1)\ +\ \frac{2}{9}\left(\left(\frac{3}{2}\right)^{k-1}(A-1)-3\right)\\
\ &\geqslant\ \frac{1}{9}\left(\frac{3}{2}\right)^{k-1}(A-1)\\
\ &\geqslant\ \frac{1}{9}\left(\frac{3}{2}\right)^{k-1}\frac{3}{4}A\\
\ &=\ \frac{1}{12}\left(\frac{3}{2}\right)^{k-1}A\,.
\end{align*}
Noting that~$\abs{E}\leqslant d\abs{V}\leqslant d A^d$, we get
$$\abs{E\cap T_{i,k_i}}
\ \leqslant\ \left(\frac{2}{3}\right)^{k-1}\frac{12\abs{E}}{A}
\ \leqslant\ \left(\frac{2}{3}\right)^{k-1}\frac{12dA^d}{A}
\ =\ 12d\left(\frac{2}{3}\right)^{k-1}A^{d-1}\,.$$
Consider now
$$E_1\ =\ \bigcup_{i\in\mathcal{I}}{\left(E\cap T_{i,k_i}\right)}\,,$$
whose cardinality satisfies
$$\abs{E_1}\ \leqslant\ 12 d^2 \left(\frac{2}{3}\right)^{k-1}A^{d-1}\,.$$
Let~$G'=(V',E')$ be a maximal connected component of the graph~$(V,\,E\backslash E_1)$, in terms of number of vertices. By construction, we have that, for~$i\in\mathcal{I}$,
$$\diam_i\,V'
\ \leqslant\ \max\Big(k_i,\ \diam_i\,V-(k_i+1)\Big)
\ \leqslant\ \frac{2}{3}\diam_i\,V
\ \leqslant\ \left(\frac{3}{2}\right)^{k-1}(A-1)\,,$$
while for~$i\notin\mathcal{I}$, the definition of~$\mathcal{I}$ implies
$$\diam_i\,V'\ \leqslant\ \diam_i\,V\ \leqslant\ \left(\frac{3}{2}\right)^{k-1}(A-1)\,.$$
Taking the maximum over~$i$ yields
$$\diam\,V'\ \leqslant\ \left(\frac{3}{2}\right)^{k-1}(A-1)\,.$$
Besides, note that~$\abs{V'}\leqslant\abs{V}\leqslant A^d$. Hence, by the induction hypothesis applied to~$G'$, there exists~$E_2\subset E'$ such that
$$\abs{E_2}\ \leqslant\ 2A^{d-1}+36d^2\left(1-\left(\frac{2}{3}\right)^{k-1}\right)A^{d-1}\,,$$
and all connected components of the graph~$(V',\,E'\backslash E_2)$ contain at most~$\Ceil{A^d/2}$ vertices. Now take~$E_0=E_1\cup E_2$. We have
\begin{align*}
\abs{E_0}
\ &=\ \abs{E_1}\,+\,\abs{E_2}\\
\ &\leqslant\ 12 d^2 \left(\frac{2}{3}\right)^{k-1}A^{d-1}\ +\ 2A^{d-1}+36d^2\left(1-\left(\frac{2}{3}\right)^{k-1}\right)A^{d-1}\\
\ &=\ 2A^{d-1}+36d^2\left(1-\left(\frac{2}{3}\right)^k\right)A^{d-1}\,.
\end{align*}
If~$C$ is a connected component of the graph~$(V,\,E\backslash E_0)$, then either~$C\subset V\backslash V'$ which, by maximality of~$V'$, entails~$\abs{C}\leqslant\abs{V}/2\leqslant A^d/2$, or~$C\subset V'$ in which case~$C$ turns out to be a connected component of the graph~$(V',\,E'\backslash E_2)$, which implies~$\abs{C}\leqslant \Ceil{A^d/2}$.
\end{proof}

We can now prove the butcher's lemma, which is a mere rephrasing of lemma~\ref{lemme_recurrence_graphes}.

\begin{proof}[Proof of lemma~\ref{lemme_sandwich_jambon}]
If~$\abs{V}\geqslant 4^d$, this is a straightforward consequence of lemma~\ref{lemme_recurrence_graphes} with
$$A\ =\ \abs{V}^{1/d}
\qquadet
k\ =\ \Ceil{\frac{d\ln A-\ln(A-1)}{\ln 3-\ln 2}}$$
because we then have
$$\diam\,V\ \leqslant\ \abs{V}\ =\ \frac{A^d}{A-1}(A-1)\ \leqslant\ \left(\frac{3}{2}\right)^k(A-1)$$
and the lemma provides us with a subset~$E_0\subset E$ with cardinality satisfying
$$\abs{E_0}\ \leqslant\ \left(2+36d^2\right)A^{d-1}\ \leqslant\ 4^{d+1}d^2\abs{V}^{\frac{d-1}{d}}$$ 
such that all connected components of~$(V,\,E\backslash E_0)$ contain at most~$\Ceil{A^d/2}=\Ceil{\abs{V}/2}$ vertices.
Otherwise, if we have~$\abs{V}< 4^d$, then~$E_0=E$ answers the problem.
\end{proof}

\subsection{The \guillemets{surgeon's lemma}}
\label{section_chirurgien}

The application of the butcher's lemma allows us to separate a graph into connected components which are at least twice smaller than the original graph. If the connected component of~$x$ in the remaining graph still contains more vertices than the goal size~$m$, one can apply again the butcher's lemma to this component of~$x$, to obtain a connected component which contains at most a fourth of the initial number of vertices. This operation can be repeated until the connected component of~$x$ contains strictly less than~$m$ edges, which means that we have closed too many edges. The surgeon's lemma will fix this problem, by reopening some of the edges closed by the butcher's lemma.

\begin{lemma}[The surgeon's lemma]
\label{lemme_boucher_recurrence}
Let~$k\in\N$ and let~$G=(V,\,E)$ be a connected subgraph of~$(\Z^d,\,\Ed)$ with~$\abs{V}\leqslant 2^k$. Let~$x\in V$ and let~$m$ be an integer such that~$1\leqslant m\leqslant\abs{V}$. There exists a subset~$E_0\subset E$ of edges of~$G$ with cardinality satisfying
$$\abs{E_0}\ \leqslant\ \frac{1-a^k}{1-a}4^{d+1} d^2 \abs{V}^{\frac{d-1}{d}}\,,
\qquadou
a=\frac{1}{2^{\frac{d-1}{d}}}\,,$$
such that, in the graph~$(V,\,E\backslash E_0)$, the connected component of~$x$ contains exactly~$m$ vertices.
\end{lemma}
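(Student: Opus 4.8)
The plan is to prove the statement by induction on~$k$, with the butcher's lemma (lemma~\ref{lemme_sandwich_jambon}) as the engine that essentially halves the graph at each step. The base case~$k=0$ forces~$\abs{V}=1$, so~$V=\acc{x}$,~$m=1$, and~$E_0=\emptyset$ works since~$\frac{1-a^{0}}{1-a}=0$. For the inductive step, fix~$k\geqslant 1$, assume the statement for~$k-1$, and let~$G=(V,E)$ be connected with~$\abs{V}\leqslant 2^{k}$, with~$x\in V$ and~$1\leqslant m\leqslant\abs{V}$. If~$m=\abs{V}$, take~$E_0=\emptyset$: the component of~$x$ in the connected graph~$G$ is all of~$V$. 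Otherwise~$\abs{V}\geqslant 2$, and lemma~\ref{lemme_sandwich_jambon} applied to~$G$ provides~$E_1\subset E$ with~$\abs{E_1}\leqslant 4^{d+1}d^{2}\abs{V}^{\frac{d-1}{d}}$ such that every connected component of~$(V,E\backslash E_1)$ has at most~$\Ceil{\abs{V}/2}\leqslant 2^{k-1}$ vertices (so there are at least two components). For such a component~$W$, write~$G_W$ for the graph on~$W$ whose edges are those of~$E\backslash E_1$ with both endpoints in~$W$; it is connected and disjoint from~$E_1$.

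Let~$V'$ be the component of~$x$ in~$(V,E\backslash E_1)$, so~$\abs{V'}\leqslant 2^{k-1}$. If~$m\leqslant\abs{V'}$, I would apply the induction hypothesis for~$k-1$ to~$G_{V'}$ with the vertex~$x$ and target~$m$, obtaining a set~$E_2$ of edges of~$G_{V'}$ such that the component of~$x$ in~$G_{V'}$ deprived of~$E_2$ has exactly~$m$ vertices. Then~$E_0=E_1\cup E_2$ (a disjoint union) does the job: in~$(V,E\backslash E_0)$ the component of~$x$ stays inside~$V'$, where the open edges are exactly those of~$G_{V'}$ outside~$E_2$, so it has~$m$ vertices.

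The substantial case is~$\abs{V'}<m$, where the surgeon reopens edges of~$E_1$. Since~$G$ is connected and~$V'$ is a full component of~$(V,E\backslash E_1)$, any edge with one endpoint in~$V'$ and one outside lies in~$E_1$; the same holds with~$V'$ replaced by any union of whole components. Grow a cluster: set~$S_0=V'$, and while~$\abs{S_i}<m$ pick an edge~$e_{i+1}\in E_1$ joining~$S_i$ to a component~$W_{i+1}$ of~$(V,E\backslash E_1)$ not yet absorbed, and set~$S_{i+1}=S_i\cup W_{i+1}$ (such an edge exists because~$S_i\neq V$ and~$G$ is connected, and~$S_i$ is always a union of whole components). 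Since~$\abs{V}\geqslant m$, this halts at the first~$t\geqslant 1$ with~$\abs{S_t}\geqslant m$. If~$\abs{S_t}=m$, take~$E_0=E_1\backslash\acc{e_1,\ldots,e_t}$: the edges~$e_1,\ldots,e_t$ glue~$V',W_1,\ldots,W_t$ together, every other edge of~$E_1$ stays closed, and no edge of~$E\backslash E_1$ leaves~$S_t$, so the component of~$x$ is exactly~$S_t$. If~$\abs{S_t}>m$, let~$y$ be the endpoint of~$e_t$ in~$W_t$ and~$r=m-\abs{S_{t-1}}$, so that~$1\leqslant r<\abs{W_t}$; apply the induction hypothesis for~$k-1$ to~$G_{W_t}$ (connected, with~$\abs{W_t}\leqslant 2^{k-1}$) with vertex~$y$ and target~$r$, obtaining a set~$E_2$ of edges of~$G_{W_t}$, and put
$$E_0\ =\ \big(E_1\backslash\acc{e_1,\ldots,e_t}\big)\cup E_2\,.$$
In~$(V,E\backslash E_0)$, the component of~$x$ is~$S_{t-1}$ (glued by~$e_1,\ldots,e_{t-1}$), attached through~$e_t$ to the component of~$y$ in~$G_{W_t}$ deprived of~$E_2$, and nothing else: among the open edges~$e_1,\ldots,e_t$ of~$E_1$, only~$e_t$ is incident to~$W_t$, and none leaves~$S_{t-1}\cup W_t$. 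Hence it has~$\abs{S_{t-1}}+r=m$ vertices.

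It remains to control~$\abs{E_0}$. In every case~$E_0=(E_1\backslash R)\cup E_2$ with~$R\subset E_1$ and~$E_2$ a set of edges of some~$G_W$, so~$E_1\cap E_2=\emptyset$ and, by the induction hypothesis,
$$\abs{E_0}\ \leqslant\ \abs{E_1}+\abs{E_2}\ \leqslant\ 4^{d+1}d^{2}\abs{V}^{\frac{d-1}{d}}+\frac{1-a^{k-1}}{1-a}\,4^{d+1}d^{2}\,\abs{W}^{\frac{d-1}{d}}\,.$$
Since the proof of lemma~\ref{lemme_sandwich_jambon} in fact separates~$G$ into pieces of at most~$\abs{V}/2$ vertices, one has~$\abs{W}\leqslant\abs{V}/2$, hence~$\abs{W}^{\frac{d-1}{d}}\leqslant a\,\abs{V}^{\frac{d-1}{d}}$; together with~$1+a\cdot\frac{1-a^{k-1}}{1-a}=\frac{1-a^{k}}{1-a}$ this yields the claimed bound. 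The step I expect to be the main obstacle is the case~$\abs{V'}<m$: one has to check that a reattaching edge of~$E_1$ is always available, that recursing into the overshooting piece~$W_t$ from the correct boundary vertex~$y$ makes the cluster land on exactly~$m$ vertices, and that removing the remaining edges of~$E_1$ while closing~$E_2$ inside~$G_{W_t}$ really prevents any extra vertex from joining the component of~$x$.
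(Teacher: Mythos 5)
Your proof is correct and takes essentially the same route as the paper: butcher the graph, grow the cluster of~$x$ by reopening connecting edges until the target size is reached or exceeded, then recurse into the last piece added from its attachment vertex, with the same geometric-series count giving the factor~$\frac{1-a^k}{1-a}$. Your only deviations are minor: you absorb whole components instead of reopening the butcher edges one by one, and you explicitly appeal to the bound~$\abs{V}/2$ (rather than~$\Ceil{\abs{V}/2}$) implicit in the proof of lemma~\ref{lemme_sandwich_jambon} to justify~$\abs{W}^{\frac{d-1}{d}}\leqslant a\abs{V}^{\frac{d-1}{d}}$, a point the paper's own proof passes over silently.
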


\begin{proof}
We proceed by induction on~$k$. The result is trivial if~$k=0$, so we perform next the induction step. Take~$k\geqslant 1$ such that the result holds for~$k-1$. Let~$G=(V,\,E)$ be a connected subgraph of~$(\Z^d,\,\Ed)$ with~$2^{k-1}<\abs{V}\leqslant 2^k$, let~$x\in V$ and let~$m$ be an integer such that~$1\leqslant m\leqslant\abs{V}$. According to lemma~\ref{lemme_sandwich_jambon}, we can choose a subset~$E_0\subset E$ of cardinality
$$\abs{E_0}\ \leqslant\ 4^{d+1} d^2 \abs{V}^{\frac{d-1}{d}}$$
such that any connected component of the graph~$(V,\,E\backslash E_0)$ contains at most~$2^{k-1}$ vertices. The idea is to reopen the edges of~$E_0$ one by one starting from the cluster of~$x$, in order to make this cluster grow until it reaches or exceeds the size~$m$. Then we will apply the induction hypothesis on the last piece added, which contains at most~$2^{k-1}$ vertices.
\begin{figure}[h]
\begin{center}
\begin{tikzpicture}
\filldraw[pattern = north west lines, pattern color=gray!70] (0,0) to[out=90, in=-20] (-1,2) to[out=160, in=40] (-3,1) to[out=-140, in=190] (-1,-1) to[out=10, in=-90] (0,0);
\filldraw[pattern = north west lines, pattern color=gray!70] (1,0) to[out=90, in=190] (3,2) to[out=10, in=170] (4,2) to[out=-10, in=110] (4,1) to[out=-70, in=-90] (1,0);
\draw[thick,dashed] (4,2) to node[midway, right] {$e_3$} (4,3);
\filldraw[pattern = north west lines, pattern color=gray!70] (3,3) to[out=150, in=170] (4,4) to[out=-10, in=45] (4,3) to[out=-130,  in=-30] (3,3);
\draw[thick,dashed] (4,4) to node[midway, above] {$e_4$} (5,4);
\filldraw[pattern = north west lines, pattern color=gray!70] (5,4) to[out=-90, in=210] (5.8,3.5) to[out=30, in=-20] (5.6,4.2) to[out=160, in=90] (5,4);
\draw[densely dotted] (4,4.1) to[out=0,in=45] (4,-0.5) to[out=-135, in=-30] (-3.4,-0.5) to[out=150, in=-170] (-0.5,4.3) to[out=10, in=180] (4,4.1);
\node at(4,-1.3) {$V_2=V_\sigma$};
\node at(-1.5,0.5) {$V_0$};
\node at(2.6,0.7) {$V_1\backslash V_0$};
\node at(1.1,3.7) {$V'=V_2\backslash V_1=V_\sigma\backslash V_{\sigma-1}$};
\node at(6.7,3.8) {$V_4\backslash V_3$};
\draw[thick] (0,0) to node[midway, above]{$e_1$} (1,0);
\draw (-1.3,-0.2) node{$\bullet$} node[below right]{$x$};
\draw[thick] (3,2) to node[midway, left] {$e_2=e_\sigma$} (3,3) node {$\bullet$} node[above right] {$x'$};
\end{tikzpicture}
\end{center}
\caption{Illustration of the proof of lemma~\ref{lemme_boucher_recurrence}: closing the edges of~$E_0=\acc{e_1,\,e_2,\,e_3,\,e_4}$ cuts the graph in pieces containing at most~$2^{k-1}$ vertices. We reopen the edges~$e_i$ in this order until the number of vertices in the cluster of~$x$ reaches or exceeds~$m$. In the case drawn here,~$\sigma=2$, and~$V_3=V_2$ because the edge~$e_3$ connects two vertices which already belong to~$V_2$.}
\end{figure}
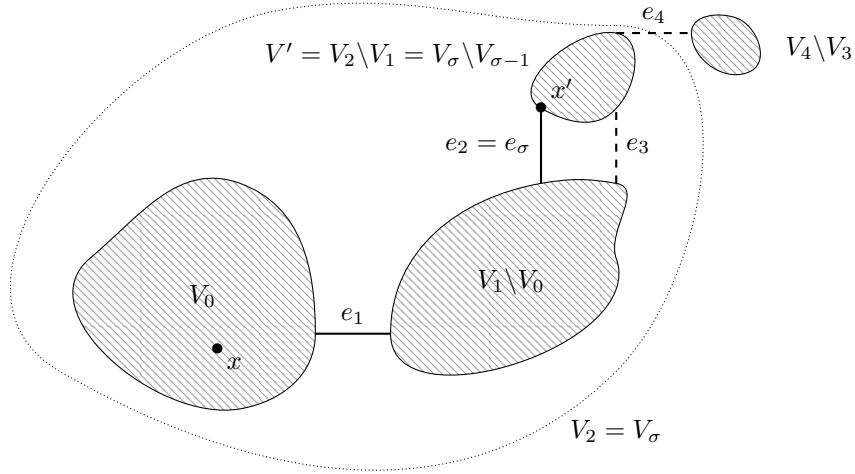

We are going to order the edges of~$E_0$ by exploring them one by one starting from the cluster of~$x$. We start by writing~$V_0$ for the connected component of~$x$ in the graph~$(V,\,E\backslash E_0)$. We have that~$\abs{V_0}\leqslant 2^{k-1}<\abs{V}$, hence~$V_0\subsetneq V$. Yet the graph~$(V,\,E)$ is connected, therefore we can choose an edge~$e_1\in E_0$ incident to this cluster~$V_0$. Assume now that we have defined~$e_1,\,\ldots,\,e_s\in E_0$ for some~$s\geqslant 1$. Let~$V_s$ be the connected component of~$x$ in the graph
$$\big(V,\,E\,\backslash\left(E_0\backslash\acc{e_1,\,\ldots,\,e_s}\right)\big)\,.$$
If~$s<\abs{E_0}$, then we can choose an edge~$e_{s+1}\in E_0$ incident to~$V_s$. Such an edge exists because~$(V,\,E)$ is connected. We proceed with this construction until all the edges of~$E_0$ are ordered in a sequence~$e_1,\,\ldots,\, e_r$ where~$r=\abs{E_0}$. We have then
$$x \in V_0\ \subset\ V_1\ \subset\ \ldots\ \subset\ V_{r} = V\,.$$
If we close all the edges of~$E_0$ and then reopen these edges one by one in the order~$e_1,\,\ldots,\, e_r$, then after having reopened~$s$ edges, the cluster of~$x$ is~$V_s$. Therefore, we introduce
$$\sigma\ =\ \min\Big\{\,s\in\acc{0,\,\ldots,\, r}\ :\ \abs{V_s}\geqslant m\,\Big\}$$
which is the number of reopened edges at which the size of the cluster of~$x$ reaches or exceeds the desired size~$m$. This number~$\sigma$ is well-defined because~$\abs{V_{r}}=\abs{V}\geqslant m$. Assume that~$\sigma\geqslant 1$. By minimality of~$\sigma$, we have~$\abs{V_{\sigma-1}}< m\leqslant\abs{V_\sigma}$, hence~$V_\sigma\neq V_{\sigma-1}$. In that case, the edge~$e_{\sigma}$ must connect a vertex of~$V_{\sigma-1}$ to a vertex~$x'\in V_\sigma\backslash V_{\sigma-1}$. Letting~$m'=m-\abs{V_{\sigma-1}}$, we have that
$$1\ \leqslant\ m'\ \leqslant\ \abs{V_\sigma}-\abs{V_{\sigma-1}}\ =\ \abs{V_\sigma\backslash V_{\sigma-1}}\,.$$
Otherwise, if~$\sigma=0$, we set~$x'=x$ and~$m'=m$, which entails~$1\leqslant m'\leqslant\abs{V_0}$.
\\

\noindent Let us consider the graph~$G'=(V',\,E')$ of the connected component of~$x'$ in~$(V,\,E\backslash E_0)$. The choice of~$E_0$ ensures that~$\abs{V'}\leqslant 2^{k-1}$. What's more, we have that~$V'=V_\sigma\backslash V_{\sigma-1}$ if~$\sigma\geqslant 1$ and~$V'=V_0$ otherwise, which in both cases leads to~$1\leqslant m'\leqslant\abs{V'}$.
The induction hypothesis applied to the graph~$G'=(V',\,E')$ gives us a subset~$E'_0\subset E'$ satisfying
$$\abs{E'_0}\ \leqslant\ \frac{1-a^{k-1}}{1-a}4^{d+1} d^2 \abs{V'}^{\frac{d-1}{d}}
\ \leqslant\ \frac{1-a^{k-1}}{1-a}4^{d+1} d^2 a\abs{V}^{\frac{d-1}{d}}$$
and such that the connected component of~$x'$ in~$(V',\,E'\backslash E'_0)$, which will be denoted~$V'_{x'}$, contains exactly~$m'$ vertices. Now, we consider the set
$$E_0\dprime\ =\ \acc{e_{\sigma+1},\,\ldots,\,e_r}\cup E'_0\,,$$
which is such that
\begin{align*}
\abs{E_0\dprime}
\ &=\ (r-\sigma)+\abs{E'_0}\\
\ &\leqslant\ 4^{d+1} d^2\abs{V}^{\frac{d-1}{d}}+\frac{a-a^k}{1-a}4^{d+1} d^2 \abs{V}^{\frac{d-1}{d}}\\
\ &=\ \frac{1-a^k}{1-a}4^{d+1} d^2 \abs{V}^{\frac{d-1}{d}}\,.
\end{align*}
If~$\sigma=0$, then the connected component of~$x$ in the graph~$(V,\,E\backslash E_0\dprime)$ is~$V'_{x'}$ and thus it contains exactly~$m'=m$~vertices. Otherwise, if~$\sigma\geqslant 1$, then this connected component is~$V_{\sigma-1}\cup V'_{x'}$, which contains~$\abs{V_{\sigma-1}}+m'=m$ vertices.
\end{proof}

\section{Proof of case (\textit{i}) of theorem~\ref{thm_clusters_convergence}}
\label{sectionCmax}

This section is devoted to the proof of the item~($i$) of theorem~\ref{thm_clusters_convergence}.
In this case, the function~$p_n$ is defined by~$p_n(\omega)=\exp(-\abs{C_{max}(\omega)}/n^a)$, where~$C_{max}(\omega)$ denotes the largest cluster in the box~$\Lambda(n)$ in the configuration~$\omega$.
As explained in the introduction, the first step is to show the exponential decay of the distribution of~$\abs{C_{max}}$ in the subcritical and supercritical phases.

\subsection{Exponential decay in the subcritical phase}
\label{subCmaxsous}
We first present a classical estimate about the size of the largest cluster below~$p_c$ :
\begin{lemma}
\label{Cmax_majo_souscritique}
For any~$a\in(0,\,d)$, for~$p<p_c$ and~$A>0$, we have
$$-\infty\ <\ 
\liminfn\,\frac{1}{n^a}\ln\Proba_p\Big(\abs{C_{max}\big(\Lambda(n)\big)}>An^a\Big)
\ \leqslant\ 
\limsupn\,\frac{1}{n^a}\ln\Proba_p\Big(\abs{C_{max}\big(\Lambda(n)\big)}>An^a\Big)\ <\ 0\,.$$
\end{lemma}

\begin{proof}
Let~$a>0$,~$p<p_c$ and~$A>0$. For all~$n\geqslant 1$, we have that
\begin{align*}
\Proba_p\Big(\abs{C_{max}\big(\Lambda(n)\big)}>An^a\Big)
\ =\ \Proba_p\left(\max\limits_{v\in\Lambda(n)}\,\abs{C_{\Lambda(n)}(v)}>An^a\right)
\ &\leqslant\ \Proba_p\left(\max\limits_{v\in\Lambda(n)}\,\abs{C(v)}>An^a\right)\\
\ &\leqslant\ n^d\Proba_p\Big(\abs{C(0)}>An^a\Big)\,.
\end{align*}
According to theorem~6.75 in~\cite{Grimmett}, there exists a constant~$\lambda(p)>0$ such that, for all~$m\geqslant 1$,
\begin{equation}
\label{eq675}
\Proba_p\Big(\abs{C(0)}\geqslant m\Big)\ \leqslant\ e^{-m\lambda(p)}\,.
\end{equation}
It follows that, for all~$n\geqslant 1$,
$$\Proba_p\Big(\abs{C_{max}\big(\Lambda(n)\big)}>An^a\Big)\ \leqslant\ n^d e^{-A\lambda(p)n^a}\,,$$
which implies the desired upper bound.
To create a cluster of size more than~$An^a$, one may simply open a self-avoiding path of~$\Ent{An^a}$ edges and~$\Ent{An^a}+1$ vertices, hence
$$\Proba_p\Big(\abs{C_{max}\big(\Lambda(n)\big)}>An^a\Big)
\ \geqslant\ p^{\Ent{An^a}}\,,$$
which shows the lower bound.
\end{proof}

\subsection{Exponential decay in the supercritical phase}
\label{subCmaxsur}
We establish a corresponding result in the supercritical regime:
\begin{lemma}
\label{Cmax_majo_surcritique}
For all~$a\in(0,\,d)$, for~$p>p_c$ and~$A>0$, we have
$$-\infty\ <\ 
\liminfn\,\frac{1}{n^{d-a/d}}\ln\Proba_p\Big(\abs{C_{max}\big(\Lambda(n)\big)}<An^a\Big)
\ \leqslant\ 
\limsupn\,\frac{1}{n^{d-a/d}}\ln\Proba_p\Big(\abs{C_{max}\big(\Lambda(n)\big)}<An^a\Big)
\ <\ 0\,.$$
\end{lemma}
The upper bound is a consequence of the following result, which easily follows from the classical literature:
\begin{lemma}
\label{CmaxMajoStronger}
For all~$p>p_c$, we have
$$\limsup\limits_{n\rightarrow\infty}\,\frac{1}{n^{d-1}}\ln\Proba_p\left(\abs{C_{max}\big(\Lambda(n)\big)}\leqslant\frac{\theta(p)n^d}{8}\right)\ <\ 0\,.$$
\end{lemma}
\begin{proof}
Assume first that~$d\geqslant 3$.
From theorem~1.2 of~\cite{PisztoraPGD}, it follows that, for~$d\geqslant 3$, for all~$p>\widehat{p_c}$ (where~$\widehat{p_c}$ denotes the slab-percolation threshold),
$$\limsup\limits_{n\rightarrow\infty}\,\frac{1}{n^{d-1}}\ln\Proba_p\left(\abs{C_{max}\left(\Lambda(n)\right)}\leqslant\frac{\theta(p)n^d}{2}\right)\ <\ 0\,.$$
In addition, Grimmett and Marstrand proved the identity~$p_c=\widehat{p_c}$ for~$d\geqslant 3$ in~\cite{GrimmettMarstrand}. The claim for~$d\geqslant 3$ thus follows immediately.
\\

\noindent Consider now the case~$d=2$. Theorem~6.1 of~\cite{Wulff2D} implies that, for all~$p>p_c$, if we consider a percolation configuration on~$\Z^d$ and write~$C_\infty\subset\Z^d$ for the unique infinite cluster of the configuration, then
$$\lim\limits_{n\rightarrow\infty}\,\frac{1}{n}\ln\Proba_p\left(\abs{C_{\infty}\cap\Lambda(n)}\leqslant \frac{\theta(p)n^2}{2}\right)\ <\ 0\,.$$
Thereby, there exists~$L>0$ such that, for all~$n\geqslant 1$,
$$\Proba_p\left(\abs{C_{\infty}\cap\Lambda(n)}\leqslant \frac{\theta(p)n^2}{2}\right)\ \leqslant\ e^{-Ln}\,.$$
Besides, if we set, for~$m\geqslant k\geqslant 1$,
$$L_{k,\,m}\ =\ \Big\{\,\text{The rectangle }\acc{0,\,\ldots,\,k}\times\acc{0,\,\ldots,\,m}\text{ is crossed by an open path in its long direction}\,\Big\}\,,$$
then it follows from equation~(7.110) in~\cite{Grimmett} that there exist positive constants~$C_2(p)$ and~$C_3(p)$ such that, for all~$m\geqslant k\geqslant 1$,
\begin{equation}
\label{rectangle_crossing}
\Proba_p\left(L_{k,\,m}\right)\ \geqslant\ 1-C_2 m e^{-C_3 k}\,.
\end{equation}
Define the rectangles
\begin{align*}
R_1\ =\ \Z^2\cap\left]\frac{n}{2},\,n\right[\times\left[-n,\,n\right[\,,&\qquad
R_2\ =\ \Z^2\cap\left[-n,\,n\right[\times\left]\frac{n}{2},\,n\right[\,,\\
R_3\ =\ \Z^2\cap\left[-n,\,-\frac{n}{2}\right[\times\left[-n,\,n\right[\,,&\qquad
R_4\ =\ \Z^2\cap\left[-n,\,n\right[\times\left[-n,\,-\frac{n}{2}\right[\,,
\end{align*}
which are represented in figure~\ref{dessin_contour}.
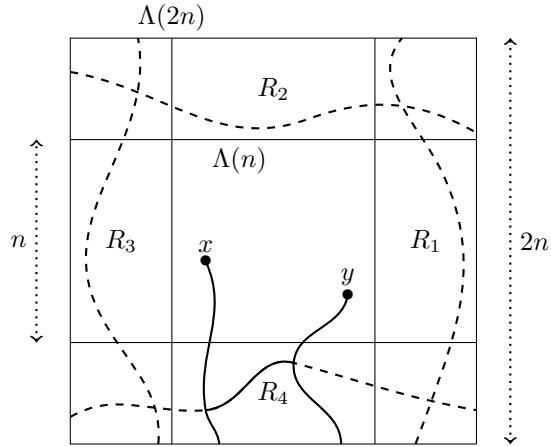
\begin{figure}[h]
\begin{center}
\begin{tikzpicture}[scale=0.9]
\draw (3,-3) to (3,3) to (-3,3) to (-3,-3) to (3,-3);
\draw (1.5,-3) to (1.5,3);
\draw (-1.5,-3) to (-1.5,3);
\draw (-3,1.5) to (3,1.5);
\draw (-3,-1.5) to (3,-1.5);
\node at(2.25,0) {$R_1$};
\node at(-2.25,0) {$R_3$};
\node at(0,2.25) {$R_2$};
\node at(0,-2.25) {$R_4$};
\draw[dashed, thick] (2.1,-3) to[out=70, in=-65] (2.5,1) to[out=115, in=-130] (1.9,3);
\draw[dashed, thick] (-1.7,-3) to[out=85, in=-65] (-2.6,-1) to[out=115, in=-80] (-2,3);
\draw[dashed, thick] (-3,2.5)  to[out=-10, in=200] (0.5,1.8) to[out=20, in=150] (3,1.6);
\draw[dashed, thick] (-3,-2.6) to[out=30, in=185] (-1,-2.5) to[out=5, in=165] (0.3,-1.8) to[out=-15, in=170] (3,-2.5);
\draw[thick] (-1,-0.3) node{$\bullet$} node[above]{$x$} to[out=-65, in=100] (-1,-2.5) to[out=-80, in=95] (-0.8,-3);
\draw[thick] (1.1,-0.8) node{$\bullet$} node[above]{$y$} to[out=-100, in=85] (0.3,-1.8) to[out=-95, in=90] (1,-3);
\draw[thick] (-1,-2.5) to[out=5, in=165] (0.3,-1.8);
\node at(-0.5,1.2) {$\Lambda(n)$};
\node at(-1.5,3.3) {$\Lambda(2n)$};
\draw[<->, dotted, thick] (3.5,-3) to node[midway, right]{$2n$} (3.5,3);
\draw[<->, dotted, thick] (-3.5,-1.5) to node[midway, left]{$n$} (-3.5,1.5);
\end{tikzpicture}
\end{center}
\caption{If each of the four rectangles~$R_1,\,R_2,\,R_3,\,R_4$ is crossed by an open path in its long direction, then~$\Lambda(n)$ is surrounded by an open path in~$\Lambda(2n)$, and thus any two vertices~$x$ and~$y$ in the box~$\Lambda(n)$ cannot be connected to~$\partial\Lambda(2n)$ without being connected to each other by an open path inside~$\Lambda(2n)$.}
\label{dessin_contour}
\end{figure}
Following a classical argument~(see the proof of theorem~7.61 in~\cite{Grimmett}), we consider the events
$$\mathcal{E}_n\,=\,\Big\{\,\text{There exists an open path in }\Lambda(2n)\backslash\Lambda(n)\text{ containing }\Lambda(n)\text{ in its interior}\,\Big\}$$
and
$$\mathcal{F}_n\,=\,\Big\{\,\text{Each of the rectangles }R_1,\ R_2,\ R_3,\ R_4\text{ is crossed by an open path in its long direction}\,\Big\}\,.$$
As illustrated on figure~\ref{dessin_contour}, we have the inclusion~$\mathcal{F}_n\subset\mathcal{E}_n$. In addition, by the FKG inequality, we have that
$$\Proba_p\left(\mathcal{F}_n\right)\ \geqslant\ \Proba_p\left(L_{\Ent{n/2},\,2n}\right)^4\,.$$
In combination with~(\ref{rectangle_crossing}), this yields
$$\Proba_p\left(\mathcal{E}_n\right)
\ \geqslant\ \Proba_p\left(\mathcal{F}_n\right)
\ \geqslant\ \Proba_p\left(L_{\Ent{n/2},\,2n}\right)^4
\ \geqslant\ \left(1-2C_2ne^{-C_3\Ent{n/2}}\right)^4\ \geqslant\ 1-8C_2ne^{-C_3\Ent{n/2}}\,.$$
Yet if the event~$\mathcal{E}_n$ occurs, then all the vertices of~$\Lambda(n)$ which are connected by an open path to the boundary of~$\Lambda(2n)$ must be connected to each other inside~$\Lambda(2n)$, which implies that~$\abs{C_{max}\left(\Lambda(2n)\right)}\geqslant\abs{C_\infty\cap\Lambda(n)}$. Therefore, we have the inclusion
$$\mathcal{E}_n\cap\acc{\abs{C_{\infty}\cap\Lambda(n)} > \frac{\theta(p)n^2}{2}}\ \subset\ \acc{\big|\,C_{max}\left(\Lambda(2n)\right)\big|>\frac{\theta(p)n^2}{2}}\,.$$
Considering complementary events leads to
\begin{align*}
\Proba_p\left(\abs{C_{max}\left(\Lambda(2n)\right)}\leqslant\frac{\theta(p)n^2}{2}\right)
\ &\leqslant\ 1-\Proba_p\left(\mathcal{E}_n\right)+\Proba_p\left(\abs{C_{\infty}\cap\Lambda(n)}\leqslant  \frac{\theta(p)n^2}{2}\right)\\
\ &\leqslant\ 8C_2ne^{-C_3\Ent{n/2}}+e^{-Ln}\\
\ &\leqslant\ e^{-L'n}
\end{align*}
for a certain constant~$L'>0$, which concludes the proof.
\end{proof}

We now briefly explain how to deduce lemma~\ref{Cmax_majo_surcritique} from lemma~\ref{CmaxMajoStronger}:

\begin{proof}[Proof of lemma~\ref{Cmax_majo_surcritique}]
We divide the box~$\Lambda(n)$ into smaller boxes of side
$$N_n\ =\ \Ceil{\left(\frac{8An^a}{\theta(p)}\right)^{1/d}}\,.$$
The box~$\Lambda(n)$ contains at least~$\Ent{n/N_n}^d$ disjoint boxes of side~$N_n$, so that we have
$$\Proba_p\Big(\abs{C_{max}\big(\Lambda(n)\big)}<An^a\Big)
\ \leqslant\ \Proba_p\Big(\abs{C_{max}\big(\Lambda(N_n)\big)}<An^a\Big)^{\Ent{n/N_n}^d}
\ \leqslant\ \Proba_p\left(\abs{C_{max}\big(\Lambda(N_n)\big)}<\frac{\theta(p)N_n^d}{8}\right)^{\Ent{n/N_n}^d}\,,$$
which implies that
\begin{multline*}
\limsup\limits_{n\rightarrow\infty}\,\frac{1}{n^{d-a/d}}\ln\Proba_p\Big(\abs{C_{max}\big(\Lambda(n)\big)}<An^a\Big)
\ \leqslant\ \limsup\limits_{n\rightarrow\infty}\,\frac{n^d}{N_n^dn^{d-a/d}}\ln\Proba_p\left(\abs{C_{max}\big(\Lambda(N_n)\big)}<\frac{\theta(p)N_n^d}{8}\right)\\
\ =\ \left(\frac{\theta(p)}{8A}\right)^{1/d}\limsup\limits_{N\rightarrow\infty}\,\frac{1}{N^{d-1}}\ln\Proba_p\left(\abs{C_{max}\big(\Lambda(N)\big)}<\frac{\theta(p)N^d}{8}\right)
\ <\ 0\,,
\end{multline*}
where the last inequality comes from lemma~\ref{CmaxMajoStronger}.
To obtain the lower bound, we divide the box~$\Lambda(n)$ into boxes of side~$N_n=\Ceil{(An^a)^{1/d}}-1$, which all contain strictly less than~$An^a$ vertices, and we consider the event that all the edges between two neighbouring boxes are closed. This leads to
$$\ln\Proba_p\Big(\abs{C_{max}\big(\Lambda(n)\big)}<An^a\Big)
\ \geqslant\ d N_n^{d-1} \Ceil{\frac{n}{N_n}}^d\ln(1-p)
\ \eqninfty\ \left(\frac{\theta(p)}{8A}\right)^{1/d}d\ln(1-p)n^{d-a/d}\,,$$
which shows that~$d-a/d$ is indeed the correct exponent.
\end{proof}

\subsection{Lower bound on the partition function}
We show here the following inequality on the normalization constant~$Z_n$ of our model:
\label{section_Cmax_mino_Zn}
\begin{lemma}
\label{Cmax_lemme_minoration_Z_n}
For any real number~$a$ such that~$0<a<d$, we have
$$\liminf\limits_{n\rightarrow\infty}\,\frac{\ln Z_n}{(\ln n)n^{a(d-1)/d}}\ >\ -\infty\,.$$
\end{lemma}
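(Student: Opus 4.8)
The plan is to bound $Z_n$ from below by exhibiting, with non‑negligible probability, configurations on which $\Proba_{p_n(\omega)}(\omega)$ is not too small. Since $p_n(\omega)$ depends on $\omega$ only through $m:=|C_{max}(\omega)|$, sorting the sum defining $Z_n$ according to this value yields the exact identity
$$Z_n\ =\ \sum_{m=1}^{n^d}\Proba_{e^{-m/n^a}}\big(|C_{max}(\Lambda(n))|=m\big)\,,$$
and, introducing the standard monotone coupling $(\omega_p)_{p\in[0,1]}$ built from i.i.d.\ uniform variables $(U_e)_{e\in\En}$ (for which $p\mapsto N(p):=|C_{max}(\omega_p)|$ is non‑decreasing and right‑continuous), this reads
$$Z_n\ =\ \Proba\Big(\,\exists\,m\in\{1,\dots,n^d\}\ :\ N\big(e^{-m/n^a}\big)=m\,\Big)\,.$$
So it suffices to produce a set of realizations of probability at least $\exp\!\big(-C n^{a(d-1)/d}\ln n\big)$ on which the (strictly decreasing) map $m\mapsto N(e^{-m/n^a})-m$ hits $0$.

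First I would locate the relevant scale. Fix $m_0:=\lceil n^a\ln(1/p_c)\rceil$, so $m_0$ is of order $n^a$ and $q_0:=e^{-m_0/n^a}$ is close to $p_c$. Choosing $p_-<p_c<p_+$, Lemma~\ref{Cmax_majo_souscritique} gives $N(p_-)<m_0$ with probability $1-e^{-\Omega(n^a)}$ and Lemma~\ref{Cmax_majo_surcritique} gives $N(p_+)\ge\tfrac{\theta(p_+)}{8}n^d>m_0$ with probability $1-e^{-\Omega(n^{d-1})}$; hence, with probability at least $\tfrac12$, the first parameter $\rho$ with $N(\rho)\ge m_0$ lies in $(p_-,p_+]$, so the whole crossing takes place at scale $m_0$ and near $p_c$. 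The key structural fact is that opening one edge can at most double the size of the largest cluster: therefore, just below $\rho$ every cluster has fewer than $m_0$ vertices, and at $\rho$ a single merge occurs, so $\omega_\rho$ has $N(\rho)\in[m_0,2m_0)$ with all other clusters of size $<m_0$. (More generally, the same holds of the configuration reached by opening, in increasing‑$U_e$ order, exactly those window edges needed to push the largest cluster up to $m_0$.)

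The surgery then brings this configuration to a clean crossing. Applying Lemma~\ref{lemme_boucher} to the connected subgraph carried by the largest cluster, with target size $m_0$, I can close a set $E_0$ of at most $K(2m_0)^{(d-1)/d}=O\!\big(n^{a(d-1)/d}\big)$ open edges so that, in the modified configuration, the cluster of the distinguished vertex has exactly $m_0$ vertices; the remaining fragments of the former largest cluster have size $<m_0$ and all other clusters were already $<m_0$, so the modified configuration $\widetilde\omega$ satisfies $|C_{max}(\widetilde\omega)|=m_0$ exactly. Translating back to the coupling: prescribing the states of the $O(n^{a(d-1)/d})$ edges involved in the surgery (closing those of $E_0$ and, if needed, neutralising the later window edges by moving their $U_e$ into a fixed interval of length of order $1/n^a$ just above $q_0$) turns a good realization into one with $N(e^{-m_0/n^a})=m_0$. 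The probabilistic cost is controlled by a crude count: the choice of which $O(n^{a(d-1)/d})$ edges to reset, together with the event that each of their uniforms lands in the prescribed order‑$1/n^a$ interval (near $p_c$, hence bounded away from $0$ and $1$), can be described in $\exp\!\big(O(n^{a(d-1)/d}\ln n)\big)$ ways, each of probability $\exp\!\big(-O(n^{a(d-1)/d}\ln n)\big)$. Combining this change‑of‑measure estimate with the probability $\ge\tfrac12$ of the good event gives $Z_n\ge\exp\!\big(-C n^{a(d-1)/d}\ln n\big)$, i.e.\ the claim.

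I expect the surgical step to be the main obstacle, on two counts. First, one must make sure the target configuration (one cluster in $[m_0,2m_0)$, all others strictly below $m_0$) is reachable from a coupled configuration by changing only $O(n^{a(d-1)/d})$ edges — this is exactly where the careful study of $|C_{max}(\omega_p)|$ as $p\downarrow p_c$ is needed, to bound the number of window edges that actually matter. Second, one must carry out the change of measure between the uniform coupling and the Bernoulli measure at the prescribed parameter honestly, so that it is paid for by the elementary count above and not by an uncontrolled factor; the geometric input of Lemma~\ref{lemme_boucher} is precisely what keeps the edge budget of the surgery at $O(n^{a(d-1)/d})$, and hence produces the exponent $a(d-1)/d$.
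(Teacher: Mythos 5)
Your overall strategy is the paper's: rewrite $Z_n$ as the probability that the monotone coupling produces a fixed point of $m\mapsto\abs{C_{max}}$, then use Lemma~\ref{lemme_boucher} to fashion, at cost $\exp\big(-O(n^{a(d-1)/d}\ln n)\big)$, a configuration whose largest cluster has exactly the pivotal size. But the execution has a genuine gap exactly at the step you flag as ``the main obstacle'', and it is not a technicality. You fix the target size $m_0=\Ceil{n^a\ln(1/p_c)}$ and hence the target parameter $q_0=e^{-m_0/n^a}$ \emph{in advance}, while your good event only controls the configuration $\omega_\rho$ at the \emph{random} parameter $\rho$ where $N(p)=\abs{C_{max}(\omega_p)}$ first reaches $m_0$. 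To realize the fixed point you need $N(q_0)=m_0$, and $\omega_{q_0}$ differs from $\omega_\rho$ on every edge whose uniform lies between $\rho$ and $q_0$. The sub/supercritical lemmas only give $\abs{\rho-q_0}=o(1)$; any sharper localization of $\rho$ around $p_c$ is precisely the near-critical finite-size-scaling information the paper deliberately avoids. Consequently the number of ``window edges'' to neutralise is typically of order $n^d\abs{\rho-q_0}$, which is not $O(n^{a(d-1)/d})$, and your budget claim — hence the whole $\exp\big(-O(n^{a(d-1)/d}\ln n)\big)$ cost — is unjustified. A second, related defect is the bookkeeping: multiplying ``probability $\geqslant\tfrac12$ of the good event'' by the probability that the reset uniforms fall in prescribed intervals is not legitimate, because the uniforms you reset (those of $E_0$, which are $\leqslant\rho$ on the good event, and those of the window) are measurable with respect to the good event itself; there is no conditional independence to justify the product, and an injection/multiplicity argument would have to be made explicit.

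The paper resolves both points by not fixing the target size at all. It discretizes the parameter into $n^d$ steps of width $\sim 1/n^a$ via Bernoulli variables $X_{b,e}$, defines the stopping time $(B,S)$ as the first instant at which closing a single edge can bring $\abs{C_{max}}$ down to $\leqslant b+2$, and targets the \emph{random} size $B+2$. At that instant $B+2\leqslant\abs{C_{max}}\leqslant 2(B+2)$ automatically, so Lemma~\ref{lemme_boucher} applies with budget $O(B^{(d-1)/d})$, and the ``happy event'' prescribes only the variables of the next two steps restricted to the edges of the current largest cluster — variables that are independent of the past by the stopping-time property, so the conditional probability factorizes honestly. The only a priori control needed is the crude bound $B\leqslant\kappa n^a$ with probability $\geqslant\tfrac12$, obtained from the subcritical decay at the fixed parameter $p_c/2$, with no near-critical input. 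If you want to salvage your continuum-coupling version, you would have to reproduce this structure (seek the fixed point at the random crossing scale and condition on a stopping time), rather than steer the configuration to a predetermined $(m_0,q_0)$.
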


\begin{proof}
As explained in the introduction, we define a monotone coupling of the probability distributions~$\Proba_{\varphi_n(t)}$ for~\smash{$t\in\acc{0,\,\ldots,\,n^d}$}.
\\

\souspreuve{Construction of the coupling:}
Write~$\En = \acc{e_1,\,\ldots,\,e_r}$ with~$r=\abs{\En}$, and consider a collection of i.i.d.\ random variables
$$\left(X_{t,e}\right)_{t\in\acc{0,\,\ldots,\,n^d-1},\,e\in \En}$$
with Bernoulli law of parameter~$\exp(-1/n^a)$. For~$t_0\in\acc{0,\,\ldots,\,n^d}$, define a random configuration
$$\omega(t_0)\ :\ e\in \En\ \longmapsto\ \min\limits_{0\leqslant t<t_0}\,X_{t,e}\,.$$
Hence, for~$t_0\in\acc{0,\,\ldots,\,n^d}$ and~$e\in\En$, we see that
$$\Proba\Big(\omega(t_0)(e)=1\Big)
\ =\ \prod_{t=0}^{t_0-1}{\Proba\big(X_{t,e}=1\big)}
\ =\ \exp\left(-\frac{t_0}{n^a}\right)
\ =\ \varphi_n(t_0)\,,$$
therefore the configuration~$\omega(t_0)$ has distribution~$\Proba_{\varphi_n(t_0)}$. What's more, configurations are coupled in such a way that
$$\mathbb{1}_{\En}=\omega(0)\ \geqslant\ \omega(1)\ \geqslant\ \cdots\ \geqslant\ \omega(n^d)\,.$$
When going from the configuration~$\omega(t)$ to the configuration~$\omega(t+1)$, a certain number or edges are closed (these are the edges~$e$ such that~$\omega(t)(e)=1$ and~$X_{t,e}=0$). In order to control the edge closures one by one, we define intermediate configurations. For~$t\in\acc{0,\,\ldots,\,n^d-1}$ and~$s_0\in\acc{0,\,\ldots,\,r}$, we set
$$\omega(t,\,s_0)\ :\ e_s\in \En\ \longmapsto\ 
\left\{
\begin{aligned}
&\omega(t+1)(e_s)\ &\text{if }s\leqslant s_0\,,\\
&\omega(t)(e_s)\ &\text{otherwise.}
\end{aligned}
\right.$$
In this way, we have~$\omega(t,\,0)=\omega(t)$ and for~$s\geqslant 1$, the configuration~$\omega(t,\,s)$ is obtained from the configuration~$\omega(t,\,s-1)$ by closing the edge~$e_s$ if~$X_{t,e_s}=0$, and by keeping everything unchanged if~$X_{t,e_s}=1$. For~$s=r=\abs{\En}$, all edges have been updated, so~$\omega(t,\,r)=\omega(t+1)$. The configurations are therefore coupled in such a way that
$$(t,\,s)\leqslant(t',\,s')\ \Longrightarrow\ \omega(t,\,s)\geqslant\omega(t',\,s')\,,$$
where we use the lexicographic order on~$\{0,\,\ldots,\,n^{d}-1\}\times\acc{0,\,\ldots,\,r}$.
As we have shown in~(\ref{Cmax_expression_Z_n}), the partition function~$Z_n$ is equal to the probability that the non-increasing function~\smash{$t\mapsto\abs{C_{max}\big(\omega(t)\big)}$} admits a fixed point.
Thus, we now look for an instant~$t=T$ situated before this function goes under the first bisector, and we will study what is needed on the variables~$X_{t,e}$ for this function to actually cross the bisector at the instant~$t=T+2$.
\\

\souspreuve{Definition of the instant~$T$:}
Still considering the lexicographic order, we define a pair of random variables
$$(T,\,S)\ =\ \min\Big\{\,
(t,\,s)\in\acc{0,\,\ldots,\,n^{d}-2}\times\acc{0,\,\ldots,\,r}\quad :\quad
\exists e\in \En\quad \abs{C_{max}\big(\omega(t,\,s)_e\big)}\leqslant t+2\,\Big\}\,.$$
This minimum is well defined because one always has~\smash{$\abs{C_{max}\big(\omega(n^d-2,\,0)\big)}\leqslant n^d$}. In addition, for every~$(t_0,\,s_0)$, the event~$\acc{(T,\,S)=(t_0,\,s_0)}$ only depends on the variables~$X_{t,\,e_s}$ for~$(t,\,s)\leqslant(t_0,\,s_0)$, which means that~$(T,\,S)$ is a stopping time for the filtration generated by the variables~$X_{t,e_s}$.
Also, closing one single edge cannot divide the size of the largest cluster by more than two, whence
\begin{equation}
\label{Cmax_majorationBS}
\abs{C_{max}\big(\omega(T,\,S)\big)}\ \leqslant\ 2(T+2)\,.
\end{equation}
Let us prove that we also have
\begin{equation}
\label{Cmax_minorationBS}
\abs{C_{max}\big(\omega(T,\,S)\big)}\ \geqslant\ T+2\,.
\end{equation}
We distinguish several cases.

\point If~$S\geqslant 1$, then the minimality of~$(T,\,S)$ ensures that, for all~$e\in\En$,
$$\abs{C_{max}\big(\omega(T,\,S-1)_{e}\big)}\ >\ T+2\,.$$
Yet the configuration~$\omega(T,\,S)$ is obtained from~$\omega(T,\,S-1)$ by closing at most one edge, whence (\ref{Cmax_minorationBS}).

\point If~$S=0$ and~$T\geqslant 1$, then,~$(T,\,S)$ being minimal, we have that 
$$\abs{C_{max}\big(\omega(T-1,\,r)\big)}\ >\ T-1+2\ =\ T+1\,.$$
The configurations~$\omega(T-1,\,r)$ and~$\omega(T,\,0)$ being identical, inequality (\ref{Cmax_minorationBS}) is also satisfied.

\point The case~$(T,\,S)=(0,0)$ does not happen because all edges are open in the configuration~$\omega(0,0)$.

\smallskip
\noindent We build next a happy event, which implies the existence of the desired fixed point.
\\

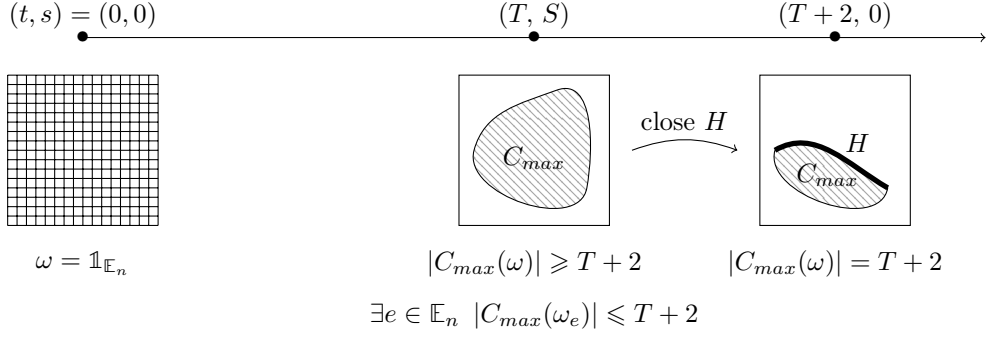
\begin{figure}
\begin{center}
\begin{tikzpicture}
\draw[->] (0,0) node{$\bullet$} node[above]{$(t,s)=(0,0)$} to (6,0) node{$\bullet$} node[above]{$(T,\,S)$} to (10,0) node{$\bullet$}node[above]{$(T+2,\,0)$} to (12,0);
\draw (-1,-2.5) grid[step=0.125] (1,-0.5);
\draw (0,-3) node{$\omega=\mathbb{1}_{\En}$};
\draw (5,-2.5) --++ (2,0) --++ (0,2) --++ (-2,0) -- cycle;
\draw (6,-3) node{$\abs{C_{max}(\omega)}\geqslant T+2$};
\draw (6,-3.7) node{$\exists e\in\En\ \abs{C_{max}(\omega_e)}\leqslant T+2$};
\filldraw[pattern = north west lines, pattern color=gray!70] (5.2,-1.5) to[out=75, in=200] (6.3,-0.7) to[out=20,in=80] (6.7,-2) to[out=-100, in=-105] (5.2,-1.5);
\draw (6,-1.6) node{$C_{max}$};
\draw (10,-3) node{$\abs{C_{max}(\omega)}=T+2$};
\draw[->] (7.3,-1.5) to[bend left=20] node[midway, above]{close~$H$} (8.7,-1.5);
\draw (9,-2.5) --++ (2,0) --++ (0,2) --++ (-2,0) -- cycle;
\filldraw[pattern = north west lines, pattern color=gray!70] (9.2,-1.5) to[out=30, in=150] (10.7,-2) to[out=-100, in=-105] (9.2,-1.5);
\draw[line width=2pt] (9.2,-1.5) to[out=30, in=150] (10.7,-2);
\draw (10.3,-1.4) node{$H$};
\draw (9.9,-1.83) node{$C_{max}$};
\end{tikzpicture}
\end{center}
\caption{Sketch of the proof: if~$\mathcal{E}$ occurs, i.e., between the instants~$(T,\,S)$ and~$(T+2,\,0)$, the edges~$H$ are closed but no other edges of~$C_{max}$ is closed, then the largest cluster in the configuration~$\omega(T+2,0)$ contains~$T+2$ vertices.}
\end{figure}

\souspreuve{Construction of the happy event:}
Let~$(V,\,E)$ be the graph associated to the largest cluster in~$\omega(T,\,S)$, that is to say~\smash{$V=C_{max}\big(\omega(T,\,S)\big)$} and~$E$ is the set of the edges between two vertices of~$V$ which are open in~$\omega(T,\,S)$.
Given~(\ref{Cmax_minorationBS}), it follows from lemma~\ref{lemme_boucher} that there exists a (random) set of edges
$$H\ =\ H\big(T,\,\omega(T,\,S)\big)\ \subset\ E\,,$$
satisfying
\begin{equation}
\label{Cmax_cardinal_H}
\abs{H}\ \leqslant\ K\abs{V}^{\frac{d-1}{d}}
\end{equation}
and such that the largest connected component of the graph~$(V,\,E\backslash H)$ contains exactly~$T+2$ vertices. Note that we have defined~$H=H(T,\,\omega(T,\,S))$ as a deterministic function of the variables~$T$ and~$\omega(T,\,S)$, this will be useful later. The existence of an edge~$e\in \En$ such that
$$\abs{C_{max}\big(\omega(T,\,S)_e\big)}\ \leqslant\ T+2$$
entails that, in~$\omega(T,\,S)$, there is at most one cluster containing strictly more than~$T+2$ vertices. Thus, closing the edges of~$H$ is enough to ensure that the remaining largest cluster contains exactly~$T+2$ vertices, i.e.,
$$\abs{C_{max}\big(\omega(T,\,S)_{H}\big)}\ =\ T+2\,.$$
Hence, closing the edges of~$H$ and no other edge of~\smash{$\Edge{C_{max}\big(\omega(T,\,S)\big)}$} between the instants~$(T,\,S)$ and~$(T+2,\,0)$ ensures that~\smash{$\abs{C_{max}\big(\omega(T+2)\big)}=T+2$}. However, the edges~$e_s\in H$ are not necessarily labeled with numbers~$s>S$. It is therefore not generally possible to close all the edges of~$H$ between the instants~$(T,\,S)$ and~$(T+1,\,0)$. For this reason, the event we consider is the one in which no edge of~\smash{$\Edge{C_{max}\big(\omega(T,\,S)\big)}$} is closed between~$(T,\,S)$ and~$(T+1,\,0)$, and the edges of~\smash{$\Edge{C_{max}\big(\omega(T,\,S)\big)}$} which are closed between~$(T+1,\,0)$ and~$(T+2,0)$ are precisely the edges of~$H$, that is to say
$$\mathcal{E}\ =\ \acc{
\begin{aligned}
&\forall s>S\qquad e_s\in \Edge{C_{max}\big(\omega(T,\,S)\big)}\ \Rightarrow\ X_{T,\,e_s}=1\\
&\forall e\in H\qquad X_{T+1,\,e}=0\\
&\forall e\in \Edge{C_{max}\big(\omega(T,\,S)\big)}\backslash H\qquad X_{T+1,\,e}=1
\end{aligned}
}\,.$$
If this event occurs, then in~$\omega(T+2)$, all the edges of~$H$ are closed, the other edges of~\smash{$\Edge{C_{max}\big(\omega(T,\,S)\big)}$} which were open in the configuration~$\omega(T,\,S)$ remain open, and all the other clusters contain at most~$T+2$ vertices, whence
$$\mathcal{E}\ \subset\ \Big\{\,\abs{C_{max}\big(\omega(T+2)\big)}\ =\ \abs{C_{max}\big(\omega(T,\,S)_{H}\big)}\ =\ T+2\,\Big\}\,.$$
\\

\souspreuve{Conditional probability of the happy event:}
Coming back to the expression~(\ref{Cmax_expression_Z_n}) of the partition function, we find that
\begin{equation}
\label{Cmax_Zn_E}
Z_n\ \geqslant\ \Proba\Big(\,\big|\,C_{max}\big(\omega(T+2)\big)\big|\ =\ T+2\,\Big)
\ \geqslant\ \Proba\left(\mathcal{E}\right)\,.
\end{equation}
Let~\smash{$(t_0,\,s_0)\in\big\{0,\,\ldots,\,n^d-2\big\}\times\acc{0,\,\ldots,\,r}$} and~$\omega_0:\En\rightarrow\acc{0,1}$ be such that
$$\Proba\big(\mathcal{C}_{t_0,\,s_0,\,\omega_0}\big)\ >\ 0
\qquadou
\mathcal{C}_{t_0,\,s_0,\,\omega_0}\ =\ \Big\{\,(T,\,S)=(t_0,\,s_0)\quadet\omega(T,\,S)=\omega_0\,\Big\}\,.$$
Having defined~$H$ as a deterministic function of~$T$ and~$\omega(T,\,S)$, we can consider the event
$$\widetilde{\mathcal{E}}_{t_0,\,s_0,\,\omega_0}\ =\ \acc{
\begin{aligned}
&\forall s>s_0\qquad e_s\in \Edge{C_{max}\left(\omega_0\right)}\Rightarrow X_{t_0,\,e_s}=1\\
&\forall e\in H(t_0,\,\omega_0)\qquad X_{t_0+1,\,e}=0\\
&\forall e\in \Edge{C_{max}\left(\omega_0\right)}\backslash H(t_0,\,\omega_0)\qquad X_{t_0+1,\,e}=1
\end{aligned}}\,,$$
which satisfies
\begin{equation}
\label{Cmax_proba_conditionnelle1}
\Proba\big(\mathcal{E}\,\big|\,\mathcal{C}_{t_0,\,s_0,\,\omega_0}\big)\ =\ \Proba\left(\widetilde{\mathcal{E}}_{t_0,\,s_0,\,\omega_0}\,\big|\,\mathcal{C}_{t_0,\,s_0,\,\omega_0}\right)\,.
\end{equation}
Now note that this event~$\widetilde{\mathcal{E}}_{t_0,\,s_0,\,\omega_0}$ depends only on the variables~$X_{t,\,e_s}$ with~$(t,\,s)>(t_0,\,s_0)$, whereas the event~$\mathcal{C}_{t_0,\,s_0,\,\omega_0}$ depends only on the variables~$X_{t,\,e_s}$ with~$(t,\,s)\leqslant(t_0,\,s_0)$. Thus, these two events are independent of each other, which allows us to write
\begin{align*}
\Proba\left(\widetilde{\mathcal{E}}_{t_0,\,s_0,\,\omega_0}\,\big|\,\mathcal{C}_{t_0,\,s_0,\,\omega_0}\right)
\ &=\ \Proba\left(\widetilde{\mathcal{E}}_{t_0,\,s_0,\,\omega_0}\right)\\
\ &=\ \prod_{\substack{s>s_0\\ e_s\in\Edge{C_{max}(\omega_0)}}}{\Proba\big(\,X_{t_0,\,e_s}=1\,\big)}\times\prod_{e\in H(t_0,\,\omega_0)}{\Proba\big(\,X_{t_0+1,\,e}=0\,\big)}\\
&\qquad\qquad\qquad\qquad\times\prod_{e\in\Edge{C_{max}(\omega_0)}\backslash H(t_0,\,\omega_0)}{\Proba\big(\,X_{t_0+1,\,e}=1\,\big)}\\
\ &\geqslant\ \left(e^{-1/n^a}\right)^{2\left|\Edge{C_{max}\left(\omega_0\right)}\right|}\left(1-e^{-1/n^a}\right)^{\abs{H(t_0,\,\omega_0)}}\,.
\end{align*}
Combining this with~(\ref{Cmax_proba_conditionnelle1}) yields
\begin{equation}
\label{Cmax_proba_conditionnelle2}
\Proba\big(\mathcal{E}\,\big|\,(T,\,S,\,\omega(T,\,S))\big)
\ \geqslant\ \left(e^{-1/n^a}\right)^{2\left|\Edge{C_{max}\left(\omega(T,\,S)\right)}\right|}\left(1-e^{-1/n^a}\right)^{\abs{H(T,\,\omega(T,\,S))}}\,.
\end{equation}
Yet, according to~(\ref{Cmax_majorationBS}), we have
$$\big|\,\Edge{C_{max}\left(\omega(T,\,S)\right)}\big|\ \leqslant\ d\abs{C_{max}(\omega(T,\,S))}\ \leqslant\ 2d(T+2)\,.$$
Furthermore, by convexity of~$x\mapsto e^{-x}$, we get
$$1-e^{-1/n^a}\ \geqslant\ \frac{1}{n^a}\left(1-e^{-1}\right)\ \geqslant\ \frac{1}{2n^a}\,.$$
In addition, combining~(\ref{Cmax_cardinal_H}) and~(\ref{Cmax_majorationBS}) leads to
$$\abs{H}\ \leqslant\ K\abs{C_{max}(\omega(T,\,S))}^{\frac{d-1}{d}}\ \leqslant\ K\big(2(T+2)\big)^{\frac{d-1}{d}}\ \leqslant\ 2K\big(T+2\big)^{\frac{d-1}{d}}\,.$$
Plugging the previous inequalities in equation~(\ref{Cmax_proba_conditionnelle2}), we obtain
$$\Proba\big(\mathcal{E}\,\big|\,(T,\,S,\,\omega(T,\,S))\big)
\ \geqslant\ \exp\left(-\frac{4d\left(T+2\right)}{n^a}\right)\left(\frac{1}{2n^a}\right)^{2K\left(T+2\right)^{\frac{d-1}{d}}}\,.$$
We take the conditional expectation with respect to~$T$, and we deduce that
\begin{equation}
\label{Cmax_E_sachant_T}
\Proba\big(\mathcal{E}\,\big|\, T\big)
\ \geqslant\ \exp\left(-\frac{4d\left(T+2\right)}{n^a}\right)\left(\frac{1}{2n^a}\right)^{2K\left(T+2\right)^{\frac{d-1}{d}}}\,.
\end{equation}
\\

\souspreuve{Upper bound on~$T$:}
Next, we need a control on~$T$ in order to obtain a lower bound on~$\Proba(\mathcal{E})$. Define
\begin{equation}
\label{defTau}
\tau_n^+\ =\ \Ceil{n^a\left(-\ln\left(\frac{p_c}{2}\right)\right)}\,.
\end{equation}
Lemma~\ref{Cmax_majo_souscritique} implies that
$$\Proba_{p_c/2}\big(\abs{C_{max}}\leqslant \tau_n^+\big)\ \cvninfty\ 1\,.$$
This entails that, for~$n$ large enough,
$$\Proba_{p_c/2}\big(\abs{C_{max}}\leqslant \tau_n^+\big)\ \geqslant\ \demi\,.$$
Given that 
$$\varphi_n\left(\tau_n^+\right)
\ \leqslant\ \varphi_n\left(n^a\left(-\ln\left(\frac{p_c}{2}\right)\right)\right)
\ =\ \frac{p_c}{2}\,,$$
we deduce that, for~$n$ large enough,
$$\Proba\big(T\leqslant \tau_n^+\big)
\ \geqslant\ \Proba\big(\abs{C_{max}\left(\omega(\tau_n^+)\right)}\leqslant \tau_n^++2\big)\\
\ =\ \Proba_{\varphi_n(\tau_n^+)}\big(\abs{C_{max}}\leqslant \tau_n^++2\big)\\
\ \geqslant\ \Proba_{p_c/2}\big(\abs{C_{max}}\leqslant \tau_n^++2\big)\\
\ \geqslant\ \demi\,.$$
Therefore, we can find~$\kappa\geqslant 2$ such that, for all~$n\geqslant 1$,
\begin{equation}
\label{Cmax_majo_T}
\Proba\big(T\leqslant \kappa n^a\big)\ \geqslant\ \demi\,.
\end{equation}
\\

\souspreuve{Conclusion:}
Combining~(\ref{Cmax_majo_T}) with~(\ref{Cmax_E_sachant_T}) gives
\begin{align*}
\Proba\left(\mathcal{E}\right)
\ &\geqslant\ \Proba\big(\mathcal{E}\cap\acc{T\leqslant\kappa n^a}\big)\\
\ &=\ \Proba\big(T\leqslant \kappa n^a\big)\Proba\big(\mathcal{E}\,\big|\, T\leqslant \kappa n^a\big)\\
\ &\geqslant\ \demi\exp\left(-\frac{4d(\kappa n^a+2)}{n^a}-2K(\kappa n^a+2)^{\frac{d-1}{d}}\ln(2n^a)\right)\\
\ &\geqslant\ \demi\exp\left(-8d\kappa-4K\kappa(\ln 2)n^{a(d-1)/d}-4 K\kappa a(\ln n) n^{a(d-1)/d}\right)\,,
\end{align*}
where we have used that~$2\leqslant \kappa n^a$.
Now recall inequality~(\ref{Cmax_Zn_E}) to deduce that
$$\liminf\limits_{n\rightarrow\infty}\,\frac{\ln Z_n}{(\ln n)n^{a(d-1)/d}}
\ \geqslant\ -4 K\kappa a\ >\ -\infty\,,$$
which is the required lower bound.
\end{proof}

\subsection{Proof of the convergence result}
\label{section_Cmax_preuve_resultat}

We are now in position to prove case~($i$) of theorem~\ref{thm_clusters_convergence}.

\begin{proof}[Proof of theorem~\ref{thm_clusters_convergence}, case~($i$)]
Let~$\varepsilon>0$ and~$a\in(0,d)$.
Given that~$a(d-1)/d<a$, the lower bound on~$Z_n$ we have obtained in lemma~\ref{Cmax_lemme_minoration_Z_n} implies that~\smash{$\liminfn Z_n/n^a\geqslant 0$}.
Combining this with the result of lemma~\ref{Cmax_majo_souscritique} and plugging it into the inequality~(\ref{munsous}) leads to
\begin{equation}
\label{prthm1}
\limsupn\,\frac{1}{n^a}\,\ln\mu_n\big(p_n<p_c-\varepsilon\big)
\ \leqslant\ 
\limsupn\,\frac{1}{n^a}\,\ln\Proba_{p_c-\varepsilon}\Big(\abs{C_{max}}>\big(-\ln(p_c-\varepsilon)\big)n^a\Big)
\ <\ 0\,.
\end{equation}
Similarly, using lemma~\ref{Cmax_majo_surcritique}, inequality~(\ref{munsur}) and the fact that~$a(d-1)/d<d-a/d$, we get
\begin{equation}
\label{prthm2}
\limsupn\,\frac{1}{n^{d-a/d}}\,\ln\mu_n\big(p_n>p_c+\varepsilon\big)
\ \leqslant\ 
\limsupn\,\frac{1}{n^{d-a/d}}\,\ln\Proba_{p_c+\varepsilon}\Big(\abs{C_{max}}<\big(-\ln(p_c+\varepsilon)\big)n^a\Big)
\ <\ 0\,.
\end{equation}
It remains to show that the exponent~$v=a\wedge(d-a/d)$ is optimal.
To this end, we go back to our computation~(\ref{munsurexact}) and we recall that~$Z_n$ was expressed as a probability in~(\ref{Cmax_expression_Z_n}), whence~$Z_n\leqslant 1$.
Therefore, with~$t_n^+$ as defined in~(\ref{defTn}) we have
$$\mu_n\big(p_n> p_c+\varepsilon\big)
\ =\ \frac{1}{Z_n}\sum_{t=0}^{t_n^+-1}\Proba_{\varphi_n(t)}\Big(\abs{C_{max}}=t \Big)
\ \geqslant\ \sum_{t=0}^{t_n^+-1}\Proba_{\varphi_n(t)}\Big(\abs{C_{max}}=t \Big)\,.$$
Using the notations of the last subsection, this becomes
\begin{equation}
\label{eq6397}
\mu_n\big(p_n> p_c+\varepsilon\big)
\ \geqslant\ \Proba\Big(\,\exists t\in\acc{0,\,\ldots,\,t_n^+-1}\ :\ \abs{C_{max}\big(\omega(t)\big)}=t\,\Big)
\ \geqslant\ \Proba\Big(\,\mathcal{E}\,\cap\,\acc{T\leqslant t_n^+-3}\,\Big)\,,
\end{equation}
since the occurrence of the event~$\mathcal{E}$ implies that~\smash{$\abs{C_{max}\big(\omega(T+2)\big)}=T+2$}.
As we did in the proof of lemma~\ref{Cmax_lemme_minoration_Z_n}, we can write
$$\Proba\big(T\leqslant t_n^+-3\big)
\ \geqslant\ \Proba\Big(\,\abs{C_{max}\big(\omega(t_n^+-3)\big)}\leqslant (t_n^+-3)+2\,\Big)
\ =\ \Proba_{\varphi_n(t_n^+-3)}\Big(\abs{C_{max}}<t_n^+\Big)\,.$$
Now notice that~$\varphi_n(t_n^+-3)\rightarrow p_c+\varepsilon$, whence~$\varphi_n(t_n^+-3)\leqslant p_c+2\varepsilon$ for~$n$ large enough.
Thus, for~$n$ large enough, we have
$$\Proba\big(T\leqslant t_n^+-3\big)\ \geqslant\ \Proba_{p_c+2\varepsilon}\Big(\abs{C_{max}}<t_n^+\Big)\,.$$
Plugging this into~(\ref{eq6397}) and using our lower bound~(\ref{Cmax_E_sachant_T}) on the conditional probability of~$\mathcal{E}$ with respect to~$T$ leads to
\begin{align*}
\frac{1}{n^{d-a/d}}\ln\mu_n\big(p_n> p_c+\varepsilon\big)
\ &\geqslant\ -\frac{4d\big(t_n^+-1\big)}{n^{a+d-a/d}}-\frac{2K\big(t_n^+-1\big)^{\frac{d-1}{d}}\ln\big(2n^a\big)}{n^{d-a/d}}+\frac{1}{n^{d-a/d}}\ln\Proba_{p_c+2\varepsilon}\Big(\abs{C_{max}}<t_n^+\Big)\\
\ &=\ O\left(\frac{1}{n^{d-a/d}}\right)+O\left(\frac{\ln n}{n^{d-a}}\right)+\frac{1}{n^{d-a/d}}\ln\Proba_{p_c+2\varepsilon}\Big(\abs{C_{max}}<t_n^+\Big)\,.
\end{align*}
Taking the infinimum limit and using the lower bound given by lemma~\ref{Cmax_majo_surcritique}, we obtain
\begin{equation}
\label{prthm3}
\liminfn\,\frac{1}{n^{d-a/d}}\ln\mu_n\big(p_n> p_c+\varepsilon\big)
\ >\ -\infty\,.
\end{equation}
To handle the other tail, we choose~$a'$ such that
$$a\ <\ a'\ <\ d\wedge\frac{da}{d-1}\,,$$
we define~$t_n^-=\Ent{n^a\big(-\ln(p_c-\varepsilon)\big)}$ and we write
\begin{align*}
\mu_n\big(p_n< p_c-\varepsilon\big)
\ &\geqslant\ \Proba\Big(\,\exists t\in\acc{t_n^-+1,\,\ldots,\,n^d}\ :\ \abs{C_{max}\big(\omega(t)\big)}=t\,\Big)\\
\ &\geqslant\ \Proba\Big(\,\mathcal{E}\,\cap\,\acc{T\geqslant t_n^--1}\,\Big)
\ \geqslant\ \Proba\Big(\,\mathcal{E}\,\cap\,\acc{t_n^--1\leqslant T\leqslant n^{a'}}\,\Big)\,.\numberthis\label{eq451}
\end{align*}
It follows from~(\ref{Cmax_majorationBS}) that
$$\Proba\big(T\geqslant t_n^--1\big)
\ \geqslant\ \Proba\Big(\,\abs{C_{max}\big(\omega(t_n^--1)\big)}>2\big(t_n^-+1\big)\,\Big)
\ \geqslant\ \Proba_{p_c-2\varepsilon}\Big(\,\abs{C_{max}}>2\big(t_n^-+1\big)\,\Big)\,.$$
Similarly, it follows from~(\ref{Cmax_minorationBS}) that
$$\Proba\big(T>n^{a'}\big)
\ \leqslant\ \Proba\Big(\,\abs{C_{max}\big(\omega\big(\big\lfloor n^{a'}\big\rfloor\big)\big)}\geqslant \big\lfloor n^{a'}\big\rfloor+2\,\Big)
\ \leqslant\ \Proba\Big(\,\abs{C_{max}\big(\omega\big(\big\lfloor n^{a'}\big\rfloor\big)\big)}>n^{a'}\,\Big)
\ \leqslant\ \Proba_{p_c/2}\Big(\,\abs{C_{max}}>n^{a'}\,\Big)\,.$$
Therefore, we have
\begin{align*}
\Proba\big(t_n^--1\leqslant T\leqslant n^{a'}\big)
\ &=\ \Proba\big(T\geqslant t_n^--1\big)-\Proba\big(T>n^{a'}\big)\\
\ &\geqslant\ \Proba_{p_c-2\varepsilon}\Big(\,\abs{C_{max}}>2\big(t_n^-+1\big)\,\Big)
-\Proba_{p_c/2}\Big(\,\abs{C_{max}}>n^{a'}\,\Big)\\
\ &\geqslant\ e^{-Cn^a}-e^{-C'n^{a'}}\ \geqslant\ \frac{e^{-Cn^a}}{2}\,,
\end{align*}
with~$C,\,C'>0$, using the exponential estimate of lemma~\ref{Cmax_majo_souscritique}.
Plugging this into~(\ref{eq451}) and using again~(\ref{Cmax_E_sachant_T}), we now obtain
\begin{align*}
\frac{1}{n^a}\ln\mu_n\big(p_n< p_c-\varepsilon\big)
\ &\geqslant\ -C-\frac{\ln 2}{n^a}-\frac{4d\big(n^{a'}+2\big)}{n^{2a}}-\frac{2K\big(n^{a'}+2\big)^{\frac{d-1}{d}}}{n^a}\ln\big(2n^a\big)\\
\ &=\ -C-\frac{\ln 2}{n^a}+O\left(\frac{1}{n^{2a-a'}}\right)+O\left(\frac{\ln n}{n^{a-a'+a'/d}}\right)
\ \cvninfty\ -C\ >\ -\infty\,.\numberthis\label{prthm4}
\end{align*}
The first case of theorem~\ref{thm_clusters_convergence} then follows from~(\ref{prthm1}),~(\ref{prthm2}),~(\ref{prthm3}) and~(\ref{prthm4}).
\end{proof}

\subsection{A variant on the torus}
One can define a similar model on the torus of side~$n$, which boils down to considering periodic boundary conditions on the box~$\Lambda(n)$. Clusters on the torus are at least as big as in the box, so the exponential decay in the supercritical phase for the model defined on the torus immediately follows from lemma~\ref{Cmax_majo_surcritique}. The analog of lemma~\ref{Cmax_majo_souscritique} can be proved by noting that the size of the cluster of the origin in the torus is stochastically dominated by the size of the cluster of the origin in a configuration on all~$\Z^d$. The same proof for the lower bound on the partition function applies in the case of the torus, by adapting our geometrical lemma to extend it to subgraphs of the torus. We therefore have the same convergence of~$p_n$ to~$p_c$ when~$n\rightarrow\infty$ for this alternative model.

\section{Proof of case (\textit{ii}) of theorem~\ref{thm_clusters_convergence}}
\label{sectionMn}

We prove here the point~($ii$) of theorem~\ref{thm_clusters_convergence}, namely the case of the model defined with~$F_n=\abs{\mathcal{M}_n}$, where~$\Mn$ is the set of the vertices connected by an open path to the boundary~$\partial\Lambda(n)$ of the box~$\Lambda(n)$.

\subsection{Exponential decay in the subcritical phase}
\label{subMnsous}

Following the same method as for the first model, we start with an upper bound on the law of~$\abs{\mathcal{M}_n}$ in the subcritical regime (the lower bound is straightforward, but we will not need it).
\begin{lemma}
\label{Mn_majo_souscritique}
For any~$a>d-1$, for~$p<p_c$ and~$A>0$, we have the upper bound
$$\limsup\limits_{n\rightarrow\infty}\,\frac{1}{n^a}\ln\Proba_p\Big(\abs{\mathcal{M}_n}>An^a\Big)\ <\ 0\,.$$
\end{lemma}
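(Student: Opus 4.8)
The plan is to bound an exponential moment of $\abs{\mathcal{M}_n}$ by $\exp\!\big(O(n^{d-1})\big)$ and then to conclude by the exponential Chebyshev inequality, using crucially that $a>d-1$. First I would decompose $\mathcal{M}_n$ into the clusters of the boundary vertices. Enumerate $\partial\Lambda(n)=\acc{x_1,\ldots,x_N}$, where $N=\abs{\partial\Lambda(n)}\leqslant 2dn^{d-1}$. Since every vertex of $\mathcal{M}_n$ is joined by an open path to at least one $x_i$, we have $\mathcal{M}_n=\bigcup_{i=1}^{N}C(x_i)$; setting $D_i=C(x_i)\setminus\bigcup_{j<i}C(x_j)$ we obtain a partition, so that $\abs{\mathcal{M}_n}=\sum_{i=1}^N\abs{D_i}$.

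Now fix $p<p_c$. By theorem~6.75 of~\cite{Grimmett} there is $\lambda(p)>0$ such that $\Proba_p\big(\abs{C(0)}\geqslant m\big)\leqslant e^{-\lambda(p)m}$ for every $m\geqslant 1$, hence $M:=\Esp{p}{e^{t\abs{C(0)}}}$ is finite (and $>1$) once we fix some $t\in(0,\lambda(p))$. The key step is to explore the boundary clusters one after another, in the order $x_1,\ldots,x_N$, and to show that, writing~$\mathcal{F}_{i-1}$ for the $\sigma$-field generated by the first $i-1$ explorations,
$$\Esp{p}{e^{t\abs{D_i}}\sachant\mathcal{F}_{i-1}}\ \leqslant\ M\,.$$
Indeed, if $x_i$ already belongs to a previously explored cluster then $D_i=\emptyset$ and the left-hand side equals $1\leqslant M$; otherwise $C(x_i)$ is disjoint from all the earlier clusters, and its exploration only ever queries edges that were still unrevealed, plus possibly some edges already revealed to be \emph{closed} (the outer boundary edges of the earlier clusters). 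Hence $C(x_i)$ can be coupled so as to be contained in an independent fresh exploration cluster of $x_i$, whose size has the law of $\abs{C(0)}$ by translation invariance. Iterating the conditional estimate over $i=N,N-1,\ldots,1$ yields
$$\Esp{p}{e^{t\abs{\mathcal{M}_n}}}\ =\ \Esp{p}{\prod_{i=1}^N e^{t\abs{D_i}}}\ \leqslant\ M^N\ \leqslant\ \exp\!\big(2d(\ln M)\,n^{d-1}\big)\,.$$

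Finally, the exponential Chebyshev inequality gives
$$\Proba_p\big(\abs{\mathcal{M}_n}>An^a\big)\ \leqslant\ e^{-tAn^a}\,\Esp{p}{e^{t\abs{\mathcal{M}_n}}}\ \leqslant\ \exp\!\big(2d(\ln M)\,n^{d-1}-tAn^a\big)\,,$$
so that, dividing by $n^a$ and letting $n\rightarrow\infty$, and since $d-1<a$,
$$\limsup\limits_{n\rightarrow\infty}\ \frac{1}{n^a}\ln\Proba_p\big(\abs{\mathcal{M}_n}>An^a\big)\ \leqslant\ -tA\ <\ 0\,.$$

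The delicate point is the conditional bound in the second paragraph: one has to argue carefully that the positive correlations between the boundary clusters cannot inflate the exponential moment, and this is exactly what the sequential exploration achieves — the reason it works is that every previously revealed edge encountered while exploring a new boundary cluster is closed, and thus can only make that cluster smaller.
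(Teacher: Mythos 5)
Your proof is correct, but it follows a genuinely different route from the paper's. The paper also starts from the decomposition of $\mathcal{M}_n$ into the boundary clusters, but it then writes $\abs{\mathcal{M}_n}=n_1+\cdots+n_t$ and observes that the configuration realizes the disjoint occurrence $\acc{\abs{C_{\Lambda(n)}(x_1)}\geqslant n_1}\circ\cdots\circ\acc{\abs{C_{\Lambda(n)}(x_t)}\geqslant n_t}$; the BK inequality turns this into a product of one-cluster probabilities, and a union bound over all choices of $(n_1,\ldots,n_t)$ costs a factor $(n^d+1)^{\abs{\partial\Lambda(n)}}=\exp\big(O(n^{d-1}\ln n)\big)$, which is harmless since $a>d-1$. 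You instead bound the exponential moment, $\Esp{p}{e^{t\abs{\mathcal{M}_n}}}\leqslant M^{\abs{\partial\Lambda(n)}}$, by a sequential exploration of the boundary clusters together with the stochastic domination of each newly explored cluster by a fresh, independent copy of $C(0)$ (your flagged ``delicate point'' is argued correctly: any open path from an unexplored $x_i$ uses only unrevealed edges, so resampling the revealed, closed boundary edges can only enlarge the cluster, and the unrevealed edges are still i.i.d.\ Bernoulli given the past exploration), and you conclude by the exponential Chebyshev inequality. What each approach buys: the paper's argument is shorter once one invokes BK, a standard inequality for increasing events, at the price of the combinatorial enumeration and a spurious $\ln n$ in the exponent; your argument avoids BK and the enumeration altogether, yields the cleaner bound $\exp\big(O(n^{d-1})-tAn^a\big)$, and is the more robust Chernoff-type scheme, but it requires writing out the exploration filtration and the coupling carefully. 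Both proofs rest on the same three ingredients — $\abs{\partial\Lambda(n)}=O(n^{d-1})$, the exponential decay of the subcritical cluster-size distribution (theorem~6.75 of~\cite{Grimmett}), and the hypothesis $a>d-1$ — and both give a bound of the stronger form $\exp(-cn^a)$ rather than merely the stated limsup.
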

\begin{proof}
Take~$a>d-1$,~$p<p_c$ and~$A>0$. Write~$\partial\Lambda(n)=\acc{x_1,\,\ldots,\,x_t}$ with~$t=\abs{\partial\Lambda(n)}$. If~$A$ and~$T$ are two events, then~$A\circ T$ denotes the disjoint occurrence of these two events, which is defined in section~2.3 of~\cite{Grimmett}. Let~$\omega:\En\rightarrow\acc{0,1}$ be a configuration such that~$\abs{\mathcal{M}_n(\omega)}>An^a$. Define, for~$i\in\acc{1,\,\ldots,\,t}$,
$$n_i\ =\ \Big|\,C_{\Lambda(n)}(x_i)\,\backslash\,\bigcup_{j<i}{C_{\Lambda(n)}(x_j)}\,\Big|
\ =\ \left\{\begin{aligned}
&\,0\ \text{if there exists}\ j<i\ \text{such that}\ x_i\connecte x_j\,,\\
&\abs{C_{\Lambda(n)}(x_i)}\text{ otherwise.}
\end{aligned}\right.$$
We have that
$$\sum_{i=1}^t{n_i}\ =\ \abs{\bigcup_{i=1}^t{C_{\Lambda(n)}(x_i)}}\ =\ \abs{\mathcal{M}_n(\omega)}\ >\ An^a\,,$$
and
$$\omega
\ \in\ \acc{\abs{C_{\Lambda(n)}(x_1)}\geqslant n_1}\circ\ldots\circ\acc{\abs{C_{\Lambda(n)}(x_t)}\geqslant n_t}\,.$$
Indeed, if~$n_i=0$, then the event~$\acc{\abs{C_{\Lambda(n)}(x_i)}\geqslant n_i}$ is trivial, whereas if we have~$n_i>0$ and~$n_j>0$ for some~$i\neq j$, then the vertices~$x_i$ and~$x_j$ must belong to disjoint clusters. Whence the inclusion
$$\Big\{\,\abs{\mathcal{M}_n}>An^a\,\Big\}\ \subset\ \bigcup_{\substack{0\leqslant n_1,\,\ldots,\,n_t\leqslant n^d\\ n_1+\cdots+n_t>An^a}}{\acc{\abs{C_{\Lambda(n)}(x_1)}\geqslant n_1}\circ\cdots\circ\acc{\abs{C_{\Lambda(n)}(x_t)}\geqslant n_t}}\,.$$
Note that, for all~$i\in\acc{1,\,\ldots,\,t}$, the event~$\acc{\abs{C_{\Lambda(n)}(x_i)}\geqslant n_i}$ is an increasing event, thus by the BK inequality,
\begin{align*}
\Proba_p\Big(\abs{\mathcal{M}_n}>An^a\Big)
\ &\leqslant\ \sum_{\substack{0\leqslant n_1,\ldots,n_t\leqslant n^d\\ n_1+\cdots+n_t>An^a}}{\prod_{i=1}^t{\Proba_p\Big(\abs{C_{\Lambda(n)}(x_i)}\geqslant n_i\Big)}}\\
\ &\leqslant\ \sum_{\substack{0\leqslant n_1,\ldots,n_t\leqslant n^d\\ n_1+\cdots+n_t>An^a}}{\prod_{i=1}^t{\Proba_p\Big(\abs{C(0)}\geqslant n_i\Big)}}\,.
\end{align*}
Furthermore, according to theorem~6.75 in~\cite{Grimmett}, for~$p<p_c$, there exists a constant~$\lambda(p)>0$ such that, for all~$n\geqslant 1$,
$$\Proba_p\big(\abs{C(0)}\geqslant n\big)\ \leqslant\ e^{-n\lambda(p)}\,,$$
which is also true if~$n=0$.
It follows that
\begin{align*}
\Proba_p\Big(\abs{\mathcal{M}_n}>An^a\Big)
\ &\leqslant\ \sum_{\substack{0\leqslant n_1,\ldots,n_t\leqslant n^d\\ n_1+\cdots+n_t>An^a}}{\prod_{i=1}^t{\exp\big(-\lambda(p)n_i\big)}}\\
\ &\leqslant\ \sum_{0\leqslant n_1,\ldots,n_t\leqslant n^d}{\exp\big(-\lambda(p)An^a\big)}\\
\ &\leqslant\ \left(n^d+1\right)^t\exp\big(-\lambda(p)An^a\big)\\
\ &=\ \exp\left(\abs{\partial\Lambda(n)}\ln(n^d+1)-\lambda(p)An^a\right)\,.
\end{align*}
To conclude, note that
$$\abs{\partial\Lambda(n)}\ln(n^d+1)
\ =\ O\left((\ln n)n^{d-1}\right)
\ =\ o(n^a)\,.$$
This completes the proof of the lemma.
\end{proof}

\subsection{Exponential decay in the supercritical phase}
\label{subMnsur}

We now state a similar exponential decay property in the supercritical regime.
\begin{lemma}
\label{Mn_majo_surcritique}
For all~$a<d$, for~$p>p_c$ and~$A>0$, we have
$$\limsupn\,\frac{1}{n^{d-1}}\ln\Proba_p\Big(\abs{\mathcal{M}_n}<An^a\Big)\ <\ 0\,.$$
\end{lemma}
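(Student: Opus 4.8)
The plan is to follow the method of Lemma~\ref{Cmax_majo_surcritique}, the argument being in fact \emph{more direct} in the present case. As there, I would establish the stronger estimate
$$\forall p>p_c\qquad \limsup\limits_{n\rightarrow\infty}\,\frac{1}{n^{d-1}}\ln\Proba_p\left(\abs{\mathcal{M}_n}\leqslant\frac{\theta(p)n^d}{2}\right)\ <\ 0\,,$$
which implies the lemma since the hypothesis $a<d$ guarantees that $An^a\leqslant\theta(p)n^d/2$ for all $n$ large enough.

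The crucial point is a deterministic comparison between $\mathcal{M}_n$ and the trace on $\Lambda(n)$ of an infinite cluster of Bernoulli percolation on the whole lattice. I would couple the two models in the obvious way: let $\omega:\Ed\rightarrow\acc{0,1}$ have law $\Proba_p$ on the whole lattice, so that its restriction to the edges of $\Lambda(n)$ has law $\Proba_p$ on $\acc{0,1}^{\En}$, and let $C_\infty(\omega)$ be an infinite open cluster of $\omega$ (which exists since $p>p_c$). I claim that $C_\infty(\omega)\cap\Lambda(n)\subset\mathcal{M}_n$. Indeed, if $v\in C_\infty(\omega)\cap\Lambda(n)$, then $v$ is joined by an open path of $\omega$ to some vertex outside $\Lambda(n)$; the portion of this path up to the first time it meets $\partial\Lambda(n)$ stays inside $\Lambda(n)$ — for a nearest-neighbour path must pass through $\partial\Lambda(n)$ before it can leave $\Lambda(n)$ — and it therefore provides an open path inside $\Lambda(n)$, all of whose edges lie in $\En$, joining $v$ to $\partial\Lambda(n)$. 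Hence $v\in\mathcal{M}_n$. Consequently $\abs{\mathcal{M}_n}\geqslant\abs{C_\infty(\omega)\cap\Lambda(n)}$, so that
$$\Proba_p\left(\abs{\mathcal{M}_n}\leqslant\frac{\theta(p)n^d}{2}\right)\ \leqslant\ \Proba_p\left(\abs{C_\infty\cap\Lambda(n)}\leqslant\frac{\theta(p)n^d}{2}\right)\,.$$

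It then remains to control the right-hand side, that is, a surface-order large deviation lower bound for the number of vertices of the infinite cluster inside a box. For $d=2$, this is precisely inequality~(\ref{Cmax_IIC}), already established in the proof of Lemma~\ref{Cmax_majo_surcritique} from Theorem~6.1 of~\cite{Wulff2D}, which gives a bound of the form $e^{-Ln}=e^{-Ln^{d-1}}$. For $d\geqslant 3$, the analogous estimate $\Proba_p(\abs{C_\infty\cap\Lambda(n)}\leqslant\theta(p)n^d/2)\leqslant e^{-cn^{d-1}}$ follows from Pisztora's surface-order large deviation bounds~\cite{PisztoraPGD}, valid for every $p>\widehat{p_c}$, together with the identity $p_c=\widehat{p_c}$ of~\cite{GrimmettMarstrand}. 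Combining the two cases yields the displayed strong estimate, hence the lemma. The only step requiring genuine care is the verification of the inclusion $C_\infty\cap\Lambda(n)\subset\mathcal{M}_n$ — specifically, the fact that the truncated open path does not exit $\Lambda(n)$; it is exactly this elementary observation that dispenses us here from the surrounding-circuit construction needed in the two-dimensional part of the proof of Lemma~\ref{Cmax_majo_surcritique}, so that the only substantial ingredient left to import is the large-deviation estimate in dimension $d\geqslant 3$.
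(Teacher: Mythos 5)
Your proof is correct, and in dimension~$2$ it is exactly the argument the paper leaves implicit: the deterministic inclusion $C_\infty\cap\Lambda(n)\subset\mathcal{M}_n$ (the truncated open path stays in the box, so every vertex of the infinite cluster lying in $\Lambda(n)$ is joined to $\partial\Lambda(n)$ by open edges of $\En$) is precisely what allows theorem~6.1 of~\cite{Wulff2D}, i.e.\ inequality~(\ref{Cmax_IIC}), to be applied directly, without the surrounding-circuit construction that was needed for $C_{max}$ in lemma~\ref{Cmax_majo_surcritique}; your reduction from $An^a$ to $\theta(p)n^d/2$ using $a<d$ is also fine. The only point where you diverge from the paper is the case $d\geqslant 3$: the paper applies theorem~1.2 of~\cite{PisztoraPGD} directly to $\abs{\mathcal{M}_n}$ — that theorem produces, up to a probability of order $e^{-cn^{d-1}}$, a crossing cluster of $\Lambda(n)$ of density close to $\theta(p)$, and a crossing cluster meets $\partial\Lambda(n)$, hence is contained in $\mathcal{M}_n$ — whereas you route through $\abs{C_\infty\cap\Lambda(n)}$ and assert that its lower deviations at surface order \emph{follow from} Pisztora's bounds. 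That assertion is true in the literature, but it is not literally the statement of theorem~1.2, which concerns clusters inside the box rather than the trace of the infinite cluster; to make your version self-contained in $d\geqslant 3$ you would need either a reference of Deuschel--Pisztora type for the density of $C_\infty$ in a box, or a short additional argument identifying the crossing cluster with a piece of the infinite cluster. The simplest repair is to bypass $C_\infty$ when $d\geqslant 3$ and use the crossing cluster itself (which lies in $\mathcal{M}_n$ for free), keeping your inclusion argument for $d=2$, where the cited estimate genuinely is about $C_\infty\cap\Lambda(n)$; with that adjustment the two proofs coincide in substance, yours having the merit of making the two-dimensional reduction explicit.
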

\begin{proof}
Let~$p>p_c$ and~$A>0$.
As in the proof of lemma~\ref{Cmax_majo_surcritique}, we show that
$$\limsup\limits_{n\rightarrow\infty}\,\frac{1}{n^{d-1}}\ln\Proba_p\left(\abs{\mathcal{M}_n}\leqslant\frac{\theta(p)n^d}{2}\right)\ <\ 0\,.$$
For~$d\geqslant 3$, the result follows from theorem~1.2 of~\cite{PisztoraPGD}, which proves it for~$p$ larger than~$\widehat{p_c}$, which was proved to be equal to~$p_c$ in~\cite{GrimmettMarstrand}. In dimension~$d=2$, the claim follows from theorem~6.1 in~\cite{Wulff2D}.
\end{proof}

\subsection{Lower bound on the partition function}

We now establish a lower bound on the normalization constant~$Z_n$.

\begin{lemma}
\label{Mn_lemme_minoration_Z_n}
For any real~$a$ such that~$d-1<a<d$, we have
$$\liminf_{n\rightarrow\infty}\,\frac{\ln Z_n}{(\ln n)n^{a(d-1)/d}}\ >\ -\infty\,.$$
\end{lemma}

\souspreuve{Heuristics of the proof:} We wish to apply the same technique as in the proof for the case of the largest cluster (section~\ref{section_Cmax_mino_Zn}), by constructing a decreasing coupling between the distributions~$\Proba_{\varphi_n(t)}$ for~$t$ varying from~$0$ (all edges open) to~$n^d$ (almost all edges closed). We monitor the evolution of the variable~$\abs{\mathcal{M}_n}$ until an instant~$t=T'$ when~$\abs{\mathcal{M}_n}$ is of order~$T'$. Then we find a set of edges~$H\subset\En$ whose closure would lead to~$\abs{\mathcal{M}_n}=T'+2$ at the instant~$T'+2$.

The hurdle is that, in order to find such a set~$H$ which is not too big (and thus whose closure is likely enough), we need a control on the size of the clusters which are connected to the boundary of the box at the instant~$T'$.
To obtain such a control, a natural idea is to monitor first the evolution of the size of the clusters connected to the boundary, to wait for an instant~$T$ when these clusters have become small enough, and then to define the instant~$T'$ in a way which ensures that it occurs later than~$T$.
However, unlike the size of the largest cluster~$C_{max}$, which can be at most divided by a factor~$2$ when closing an edge, the size of the largest cluster connected to the boundary can fall drastically with the closure of one edge.
To avoid this, we choose to monitor the size of the largest cluster on the torus, that is to say in the box~$\Lambda(n)$ but with periodic boundary conditions.
This variable has the advantage of being at most halved at each edge closure.

\begin{proof}

\souspreuve{Sketch of the proof:} We first define a decreasing coupling of configurations~$(\omega(t,\,s))_{t,\,s}$ but on the edges of the torus.
We then consider the first instant~$(T,\,S)$ when the largest cluster on the torus contains at most~$2T+3$ vertices.
In what follows, we will reason conditionally on the fact that, at this instant, the largest cluster on the torus touches the boundary of the box.
We will show that, at this instant, we have~$\abs{\mathcal{M}_n(\omega)}\geqslant T+2$.
Next we will construct a second instant~$(T',\,S')\geqslant(T,\,S)$ and a set of edges~$H$ such that, if the only edges of~$\mathcal{M}_n(\omega)$ which are closed between~$(T',\,S')$ and~$(T'+2,\,0)$ are the edges of~$H$, then we have~$\abs{\mathcal{M}_n(\omega(T'+2))}=T'+2$.
We will call this scenario the \guillemets{happy event}, and our aim is to obtain a lower bound on its probability.
To this end, we will show that, with sufficiently high probability, we have~$T'=O(n^a)$, which implies that, from the instant~$(T,\,S)$ onward, any of the clusters on the torus contains at most~$O(n^a)$ vertices.
This control will allow us to show that it is possible to find~$H$ small enough to ensure that the happy event is likely enough.
\\

\begin{figure}[h]
\begin{center}
\begin{tikzpicture}
\draw[->] (-3.5,0) node{$\bullet$} node[above]{$(0,\,0)$} to (0,0) node{$\bullet$} node[above]{$(T,\,S)$} to (5,0) node{$\bullet$} node[above]{$(T',\,S')$} to (9,0) node{$\bullet$} node[above]{$(T'+2,\,0)$} to (11,0);
\draw (-4.5,-2.5) grid[step=0.125] (-2.5,-0.5);
\draw (-3.5,-3) node{$\omega=\mathbb{1}_{\En}$};
\filldraw[pattern=north east lines, pattern color=gray!70] (-1,-0.5) to (-1,-2.5) to (1,-2.5) to (1,-0.5) -- cycle;
\filldraw[fill=white] (-0.1,-0.5) to[out=-20, in=135] (0.6,-0.9) to[out=-45, in=95] (0.8,-1.5) to[out=-85, in=25] (0.2,-2) to[out=205, in=-10] (-0.3,-2.1) to[out=170, in=-85] (-0.7,-1.4) to[out=95, in=200] (-0.3,-0.5) -- cycle;
\filldraw[fill=gray!20!white] (-0.6,-2.5) to[out=60, in=-90] (-0.1,-1.8) to[out=90, in=220] (-0.2,-1.3) to[out=40, in=120] (0.3,-1.2) to[out=-30, in=100] (0.1,-1.7) to[out=-80, in=140] (0.7,-2.5) -- cycle;
\fill[pattern=north east lines, pattern color=gray!70] (-0.6,-2.5) to[out=60, in=-90] (-0.1,-1.8) to[out=90, in=220] (-0.2,-1.3) to[out=40, in=120] (0.3,-1.2) to[out=-30, in=100] (0.1,-1.7) to[out=-80, in=140] (0.7,-2.5) -- cycle;
\draw (0.1,-2.25) node[scale=0.92]{$C_{max}^T$};
\filldraw[fill=gray!20!white] (-0.6,-0.5) to[out=-120,in=160] (-0.1,-0.8) to[out=-20,in=-40] (0.7,-0.5) -- cycle;
\fill[pattern=north east lines, pattern color=gray!70] (-0.6,-0.5) to[out=-120,in=160] (-0.1,-0.8) to[out=-20,in=-40] (0.7,-0.5) -- cycle;
\draw (-1,-2.5) --++ (2,0) --++ (0,2) --++ (-2,0) -- cycle;
\draw (0,-3) node{$\abs{C_{max}^T(\omega)}\leqslant 2T+3$};
\draw (0,-3.7) node{$\abs{\mathcal{M}_n(\omega)}\geqslant \abs{C_{max}^T(\omega)}\geqslant T+2$};
\filldraw[pattern=north east lines, pattern color=gray!70] (4.9,-0.5) to[out=-20, in=135] (5.8,-0.8) to[out=-45, in=95] (5.95,-1.5) to[out=-85, in=25] (5.3,-2.2) to[out=205, in=-80] (5.07,-1.7) to[out=100, in=-30] (5.25,-1.2) to[out=120, in=40] (4.8,-1.3) to[out=220, in=90] (4.96,-1.8) to[out=-90, in=-10] (4.6,-2.3) to[out=170, in=-85] (4.1,-1.4) to[out=95, in=200] (4.7,-0.5) to (4,-0.5) to (4,-2.5) to (6,-2.5) to (6,-0.5) -- cycle;
\draw (5.65,-2.3) node{$\mathcal{M}_n$};
\draw (4,-2.5) --++ (2,0) --++ (0,2) --++ (-2,0) -- cycle;
\draw (5,-3) node{$\abs{\mathcal{M}_n(\omega)}\geqslant T'+2$};
\draw (5,-3.7) node{$\exists e\in\En\ \abs{\mathcal{M}_n(\omega_e)}\leqslant T'+2$};
\draw[->] (6.3,-1.5) to[bend left=20] node[midway, above]{close~$H$} (7.7,-1.5);
\filldraw[pattern=north east lines, pattern color=gray!60] (8.9,-0.5) to[out=-20, in=135] (9.8,-0.8) to[out=-45, in=95] (9.95,-1.5) to[out=-85, in=25] (9.3,-2.2) to[out=205, in=-80] (9.07,-1.7) to[out=100, in=-30] (9.25,-1.2) to[out=230, in=-20] (8.8,-1.3) to[out=220, in=90] (8.96,-1.8) to[out=-90, in=-10] (8.6,-2.3) to[out=170, in=-85] (8.1,-1.4) to[out=95, in=200] (8.7,-0.5) to (8,-0.5) to (8,-2.5) to (10,-2.5) to (10,-0.5) -- cycle;
\draw (9.65,-2.3) node{$\mathcal{M}_n$};
\draw[line width=2pt] (9.25,-1.2) to[out=230, in=-20] (8.8,-1.3);
\draw (9,-1.05) node{$H$};
\draw (8,-2.5) --++ (2,0) --++ (0,2) --++ (-2,0) -- cycle;
\draw (9,-3) node{$\abs{\mathcal{M}_n(\omega)}= T'+2$};
\end{tikzpicture}
\end{center}
\caption{Illustration for the sketch of the proof of lemma~\ref{Mn_lemme_minoration_Z_n}.}
\end{figure}
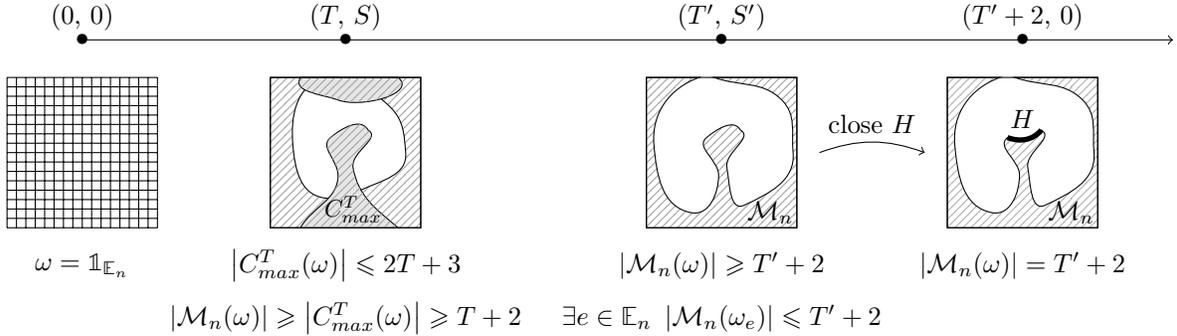

\souspreuve{Construction of the coupling and definition of~$T$:}
Take~$n\geqslant 2$.
We use the same notations and definitions as in the proof of lemma~\ref{Cmax_lemme_minoration_Z_n}, but we now consider configurations on the torus.
To define the torus, write~$p:\Z^d\to\Lambda(n)$ for the projection application, with is such that~$p(x)-x\in n\Z^d$ for every~$x\in\Z^d$.
The torus is the graph whose vertex set is~$\Lambda(n)$, and whose edge set is
$$\En^T\ =\ p\big(\Ed\big)
\ =\ \Big\{\,\big\{p(x),\,p(y)\big\}\ :\ \acc{x,y}\in\Ed\,\Big\}\,,$$
which amounts to adding edges between corresponding vertices on opposite faces of the box.
We then write~$\En^T=\acc{e_1,\,\ldots,\,e_r}$ with~$r=\abs{\En^T}$, and we consider a collection of i.i.d.\ random variables
$$\left(X_{t,e}\right)_{t\in\acc{0,\,\ldots,\,n^d-1},\,e\in \En^T}$$
all distributed with a Bernoulli law of parameter~$\exp(-1/n^a)$.
We set, for~$t_0\in\{0,\,\ldots,\,n^d\}$,
$$\omega(t_0)\ :\ e\in \En^T\ \longmapsto\ \min\limits_{0\leqslant t<t_0}\,X_{t,e}\,,$$
and for~$t\in\{0,\,\ldots,\,n^d-1\}$ and~$s_0\in\acc{0,\,\ldots,\,r}$, we define
$$\omega(t,\,s_0)\ :\ e_s\in \En^T\ \longmapsto\ 
\left\{
\begin{aligned}
&\omega(t+1)(e_s)\ &\text{if }s\leqslant s_0\,,\\
&\omega(t)(e_s)\ &\text{otherwise.}
\end{aligned}
\right.$$
For a configuration~$\omega:\En^T\rightarrow\acc{0,1}$ and~$v\in\Lambda(n)$, we denote by~$C_{\Lambda(n)}^T(v,\,\omega)\subset\Lambda(n)$ the cluster of~$v$ in the configuration~$\omega$ on the torus, that is to say the connected component of the vertex~$v$ in the graph
$$\Big(\Lambda(n),\,\acc{e\in\En^T\,:\,\omega(e)=1}\Big)\,.$$
For any~$\omega:\En^T\rightarrow\acc{0,1}$, we denote by~$C_{max}^T(\omega)$ the largest cluster on the torus in the configuration~$\omega$.
In case of equality between several clusters, we choose one with an arbitrary order on the subsets of~$\Lambda(n)$.
We consider the pair of random variables
$$(T,\,S)\ =\ \min\Big\{\,
(t,\,s)\in\acc{0,\,\ldots,\,n^{d}-2}\times\acc{0,\,\ldots,\,r}\ :\ 
\abs{C_{max}^T\big(\omega(t,\,s)\big)}\,\leqslant\,2t+3\,\Big\}\,,$$
which is well-defined because~\smash{$\abs{C_{max}^T\big(\omega(n^d-2,\,0)\big)}\leqslant n^d$}.
Let us show that, at this instant~$(T,\,S)$, we have
\begin{equation}
\label{Mn_mino_Cn}
\abs{C_{max}^T\big(\omega(T,\,S)\big)}\ \geqslant\ T+2\,.
\end{equation}
We distinguish several cases :

\point If~$S\geqslant 1$ then,~$(T,\,S)$ being minimal, we have~\smash{$\abs{C_{max}^T\big(\omega(T,\,S-1)\big)}\geqslant 2T+4$}. 
To obtain~(\ref{Mn_mino_Cn}), note that closing a single edge can at most divide~\smash{$\abs{C_{max}^T}$} by a factor two.

\point If~$T\neq 0$ and~$S=0$ then, by minimality of~$(T,\,S)$, we have that
$$\abs{C_{max}^T\big(\omega(T-1,\,r)\big)}\ \geqslant\ 2(T-1)+4\ =\ 2T+2\ \geqslant\ T+2\,,$$
which implies inequality~(\ref{Mn_mino_Cn}), because the configurations~$\omega(T-1,\,r)$ and~$\omega(T,0)$ are identical.

\point The case~$(T,\,S)=(0,\,0)$ never occurs because we have~\smash{$\abs{C_{max}^T\big(\omega(0,\,0)\big)}=n^d>3$}.

\noindent We have thus shown that~(\ref{Mn_mino_Cn}) holds.
\\

\souspreuve{Definition of the reference vertex :}
We now order the vertices of~$\Lambda(n)$ in a deterministic way (for instance the lexicographic order) and we denote by~$V$ the vertex of~\smash{$C_{max}^T\big(\omega(T,\,S)\big)$} which is minimal for this order.
Given that
$$\sum_{v\in\Lambda(n)}\Proba(V=v)\ =\ 1\,,$$
we can find a vertex~$v_0\in\Lambda(n)$ such that
$$\Proba(V=v_0)
\ \geqslant\ \frac{1}{\abs{\Lambda(n)}}
\ =\ \frac{1}{n^d}\,.$$
In what follows, we will reason conditionally on the event~$\acc{V=v_0}$.
Until now, everything took place on the torus, which is translation-invariant.
If~$v_0\notin \partial\Lambda(n)$, we can apply a translation on the torus so as to have~$v_0\in\partial\Lambda(n)$, modifying at the same time the orders considered on~$\En^T$, on~$\Lambda(n)$ and on the subsets of~$\Lambda(n)$.
Therefore, without loss of generality, we can assume that~$v_0\in\partial\Lambda(n)$.
Hence, if~$V=v_0$, then we have~\smash{$C_{max}^T\big(\omega(T,\,S)\big)\subset\Mn\big(\omega(T,\,S)\big)$}, whence
\begin{equation}
\label{Mn_mino_Mn_Cn}
V=v_0
\qquadimplique
\abs{\mathcal{M}_n\big(\omega(T,\,S)\big)}\ \geqslant\ 
\abs{C_{max}^T\big(\omega(T,\,S)\big)}\ \geqslant\ T+2\,,
\end{equation}
following~(\ref{Mn_mino_Cn}).
\\

\souspreuve{Construction of the second instant~$T'$:}
We now consider
$$(T',\,S')\ =\ \min\Big\{\,
(t,\,s)\geqslant(T,\,S)\quad :
\quad \exists e\in \En\quad \abs{\mathcal{M}_n\big(\omega(t,\,s)_{e}\big)}\ \leqslant\ t+2\,\Big\}\,.$$
The fact that~$T\leqslant n^d-2$ and~\smash{$\abs{\mathcal{M}_n\big(\omega(n^d-2,\,r)\big)}\leqslant n^d$} ensures that~$(T',\,S')$ is well-defined and that~$T'\leqslant n^d-2$. Let us show, by distinguishing several cases, that
\begin{equation}
\label{Mn_sup_biss}
V=v_0
\qquadimplique
\abs{\mathcal{M}_n\big(\omega(T',\,S')\big)}\ \geqslant\ T'+2\,.
\end{equation}

\point If~$(T',\,S')=(T,\,S)$, then the claim follows from~(\ref{Mn_mino_Mn_Cn}).

\point If~$(T',\,S')>(T,\,S)$ and~$S'=0$, then the minimality of~$(T',\,S')$ implies that
$$T'-1+2\ <\ \abs{\mathcal{M}_n\big(\omega(T'-1,\,r)\big)}\ =\ \abs{\mathcal{M}_n\big(\omega(T',\,S')\big)}\,.$$

\point Else if~$(T',\,S')>(T,\,S)$ and~$S'\neq 0$, then by minimality of~$(T',\,S')$, we know that for all~$e\in \En$, 
$$\abs{\mathcal{M}_n\big(\omega(T',\,S'-1)_{e}\big)}\ >\ T'+2\,,$$
which entails in particular that~\smash{$\abs{\mathcal{M}_n\big(\omega(T',\,S')\big)}>T'+2$}, because the configuration~$\omega(T',\,S')$ is obtained from the configuration~$\omega(T',\,S'-1)$ by closing at most one edge.

\noindent We conclude that~(\ref{Mn_sup_biss}) holds in all cases.
\\

\souspreuve{Construction of the happy event:}
We now wish to define a set of edges~$H$ that we want to be closed between the configuration~$\omega(T',\,S')$ and the configuration~$\omega(T'+2,\,0)$ in order to have
$$\abs{\mathcal{M}_n\big(\omega(T'+2)\big)}\ =\ T'+2\,.$$
We are only interested in situations where~$V=v_0$, thus we set arbitrarily~$H=\varnothing$ if the event~$\acc{V=v_0}$ does not occur.
We now assume that~$V=v_0$.
By definition of~$(T',\,S')$, there exists an edge~$e\in \En$ such that
\begin{equation}
\label{Mn_fermeture_e}
\abs{\mathcal{M}_n\big(\omega(T',\,S')_{e}\big)}\ \leqslant\ T'+2\,.
\end{equation}
We choose this edge~$e$ minimal (for the order~$e_1,\,\ldots,\,e_r$ we have considered on~$\En^T$) among the edges satisfying~(\ref{Mn_fermeture_e}), which ensures that~$e$ only depends on~$T'$,~$\omega(T',\,S')$ and~$V$.
We then construct the set~$H$ by distinguishing two cases depending on whether the inequality~(\ref{Mn_fermeture_e}) is strict or not.

\point In case there is equality in~(\ref{Mn_fermeture_e}), we take~$H=\acc{e}$.

\point Assume that~(\ref{Mn_fermeture_e}) is a strict inequality. It follows from~(\ref{Mn_sup_biss}) that
$$\abs{\mathcal{M}_n\big(\omega(T',\,S')_{e}\big)}\ <\ \abs{\mathcal{M}_n\big(\omega(T',\,S')\big)}\,,$$
which means that closing the edge~$e$ changes the number of vertices connected to the boundary of the box. Consequently, one end of the edge~$e$, say~$v$, must be disconnected from~$\partial\Lambda(n)$ when closing~$e$ in the configuration~$\omega(T',\,S')$. Write~$(C_v,\,E_v)$ for the graph of the open cluster of~$v$ in the configuration~$\omega(T',\,S')_{e}$.
We have,~using~(\ref{Mn_sup_biss}),
$$\abs{\mathcal{M}_n\big(\omega(T',\,S')_{e}\big)}
\ =\ \abs{\mathcal{M}_n\big(\omega(T',\,S')\big)}-\abs{C_v}
\ \geqslant\ T'+2-\abs{C_v}\,.$$
Combining this with the (strict) inequality~(\ref{Mn_fermeture_e}) yields
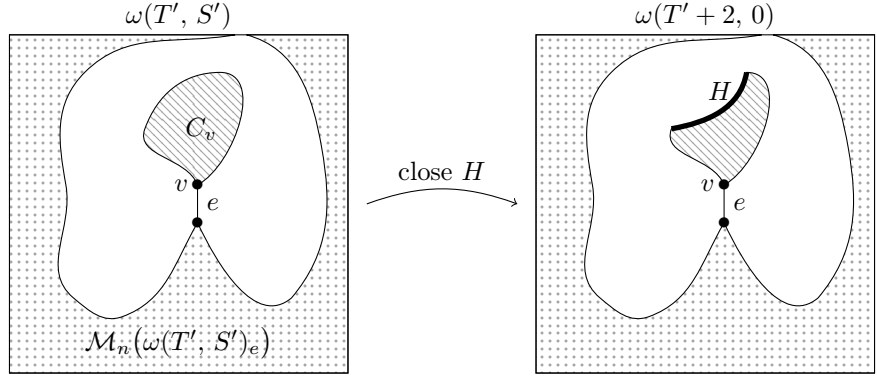
\begin{figure}[h!]
\begin{center}
\begin{tikzpicture}[scale=0.5]
\filldraw[pattern = dots, pattern color=gray!70] (6.3,9) to (9,9) to (9,0) to (0,0) to (0,9) to (6,9) to[out=-170, in=45] (2,8) to[out=-135, in=100] (1.5,5) to[out=-80, in=140] (1.7,2) to[out=-40, in=200] (3,1.5) to[out=20, in=-120] (5,4) to[out=-65, in=-140] (7.5,2) to[out=50, in=-80] (8.3,6) to[out=100, in=-15] (6.3,9);
\draw (0,0) --++ (0,9) --++ (9,0) --++ (0,-9) -- cycle;
\draw (4.5,0.8) node{$\mathcal{M}_n\big(\omega(T',\,S')_e\big)$};
\filldraw[pattern = north west lines, pattern color=gray!70] (5,5) to[out=30, in=0] (5.6,8) to[out=180, in=70] (3.6,6.5) to[out=-110, in=110] (5,5);
\draw (5,4) node{$\bullet$} to node[midway, right]{$e$} (5,5) node{$\bullet$} node[left]{$v$};
\draw (5.1,6.5) node{$C_v$};
\draw[->] (9.5,4.5) to[bend left=20] node[midway, above]{close~$H$} (13.5,4.5);
\filldraw[pattern = dots, pattern color=gray!70] (20.3,9) to (23,9) to (23,0) to (14,0) to (14,9) to (20,9) to[out=-170, in=45] (16,8) to[out=-135, in=100] (15.5,5) to[out=-80, in=140] (15.7,2) to[out=-40, in=200] (17,1.5) to[out=20, in=-120] (19,4) to[out=-65, in=-140] (21.5,2) to[out=50, in=-80] (22.3,6) to[out=100, in=-15] (20.3,9);
\filldraw[pattern = north west lines, pattern color=gray!70] (19,5) to[out=30, in=0] (19.6,8) to[out=260, in=10] (17.6,6.5) to[out=-110, in=110] (19,5);
\draw[line width=2pt] (19.6,8) to[out=260, in=10] node[midway, above]{$H$} (17.6,6.5);
\draw (19,4) node{$\bullet$} to node[midway, right]{$e$} (19,5) node{$\bullet$} node[left]{$v$};
\draw (14,0) --++ (0,9) --++ (9,0) --++ (0,-9) -- cycle;
\draw (4.5,9.5) node{$\omega(T',\,S')$};
\draw (18.5,9.5) node{$\omega(T'+2,\,0)$};
\end{tikzpicture}
\end{center}
\caption{If~(\ref{Mn_fermeture_e}) is a strict inequality, then closing the edge~$e$ in the configuration~$\omega(T',\,S')$ changes the number~$\abs{\mathcal{M}_n}$ of vertices connected to the boundary of the box. This means that one end of the edge~$e$, say~$v$, happens to be disconnected from the boundary when~$e$ is closed. We then choose a subset~$H$ of the edges of the cluster~$C_v$ which is disconnected by the closure of~$e$, such that closing all the edges of~$H$ and no other edges of~$\Edge{\mathcal{M}_n}$ between~$(T',\,S')$ and~$(T'+2,\,0)$ implies~\smash{$\abs{\mathcal{M}_n\big(\omega(T'+2)\big)}=T'+2$}.}
\end{figure}
$$1\ \leqslant\ T'+2-\abs{\mathcal{M}_n\big(\omega(T',\,S')_{e}\big)}
\ \leqslant\ \abs{C_v}\,.$$
Applying lemma~\ref{lemme_boucher} to the graph~$\left(C_v,\,E_v\right)$ and the vertex~$v$, we can choose a set~$H\subset E_v$ satisfying
\begin{equation}
\label{Mn_cardH}
\abs{H}\ \leqslant\ K\abs{C_v}^{\frac{d-1}{d}}
\end{equation}
and such that the cluster of~$v$ in the graph~$\left(C_v,\,E_v\backslash H\right)$ contains exactly~\smash{$T'+2-\abs{\mathcal{M}_n\big(\omega(T',\,S')_{e}\big)}$} vertices. We then have
$$\abs{\mathcal{M}_n\big(\omega(T',\,S')_{H}\big)}\ =\ T'+2\,.$$
The edge~$e$ (and thus the vertex~$v$) depends only on~$T'$,~$\omega(T',\,S')$ and~$V$, thus we can choose such a set~$H$ which also depends only on~$T'$,~$\omega(T',\,S')$ and~$V$. Besides, we have the following control over~$\abs{C_v}$:
\begin{equation}
\label{Mn_CV1}
\abs{C_v}
\ =\ \abs{C_{\Lambda(n)}\big(v,\,\omega(T',\,S')_e\big)}\\
\ \leqslant\ \abs{C_{\Lambda(n)}\big(v,\,\omega(T',\,S')\big)}\\
\ \leqslant\ \abs{C_{max}^T\big(\omega(T',\,S')\big)}\,.
\end{equation}
Note now that~$\abs{C_{max}^T(\omega)}$ is a decreasing function of~$\omega$ and that, by definition,~$(T',\,S')\geqslant(T,\,S)$, whence
\begin{equation}
\label{Mn_CV2}
\abs{C_{max}^T\big(\omega(T',\,S')\big)}\ \leqslant\ \abs{C_{max}^T\big(\omega(T,\,S)\big)}\ \leqslant\ 2T+3\,.
\end{equation}
Combining~(\ref{Mn_CV1}) and~(\ref{Mn_CV2}), we get~$\abs{C_v}\leqslant 2T+3$, and therefore the upper bound~(\ref{Mn_cardH}) becomes
\begin{equation}
\label{Mn_cardH2}
\abs{H}\ \leqslant\ K\big(2T+3\big)^{\frac{d-1}{d}}\,.
\end{equation}

\noindent To sum up these two cases, we have defined a (random) set of edges~$H\subset\En$ whose size is controlled by~(\ref{Mn_cardH2}) and which satisfies
$$\abs{\mathcal{M}_n\big(\omega(T',\,S')_{H}\big)}\ =\ T'+2\,.$$
Therefore, conditionally on~$\acc{V=v_0}$, if the edges belonging to~$H$ and no other edges of~\smash{$\Edge{\mathcal{M}_n\big(\omega(T',\,S')\big)}$} are closed between the configurations~$\omega(T',\,S')$ and~$\omega(T'+2,\,0)$, then we have
\begin{equation}
\label{Mn_egalite}
\abs{\mathcal{M}_n\big(\omega(T'+2)\big)}\ =\ T'+2\,.
\end{equation}
This leads us to consider the event
$$\mathcal{E}\ =\ \acc{\,
\begin{aligned}
&\forall s>S'\qquad e_s\in \Edge{\mathcal{M}_n\big(\omega(T',\,S')\big)}\Rightarrow X_{T',\,e_s}=1\\
&\forall e\in H\qquad X_{T'+1,\,e}=0\\
&\forall e\in \Edge{\mathcal{M}_n\big(\omega(T',\,S')\big)}\backslash H\qquad X_{T'+1,\,e}=1
\end{aligned}\,
}$$
which, if it occurs and if~$V=v_0$, implies~(\ref{Mn_egalite}). Also, our expression~(\ref{Cmax_expression_Z_n}) becomes
\begin{equation}
\label{lienEnZn}
Z_n\ =\ \Proba\Big(\,\exists t\in\acc{0,\,\ldots,\,n^d}\quad \abs{\mathcal{M}_n\big(\omega(t)\big)}=t\,\Big)
\ \geqslant\ \Proba\Big(\,\mathcal{E}\cap\acc{V=v_0}\,\Big)\,.
\end{equation}
\\

\souspreuve{Conditional probability of the happy event:}
As in the proof of lemma~\ref{Cmax_lemme_minoration_Z_n}, we consider~$(t_0,\,t_0',\,s_0')$ and a configuration~$\omega_0:\En^T\rightarrow\acc{0,1}$ such that
$$\Proba\big(\mathcal{C}_{t_0,\,t_0',\,s_0',\,\omega_0}\big)\ >\ 0
\quadou
\mathcal{C}_{t_0,\,t_0',\,s_0',\,\omega_0}
\ =\ \Big\{\,(T,\,T',\,S')=(t_0,\,t_0',\,s_0')\,\Big\}
\cap\Big\{\,\omega(T',\,S')=\omega_0\,\Big\}
\cap\Big\{\,V=v_0\,\Big\}\,.$$
By definition of~$T$ and~$T'$, we have
\begin{equation}
\label{n6}
\abs{\mathcal{M}_n\left(\omega_0\right)}
\ \leqslant\ (T'+2)+(2T+3)
\ \leqslant\ 3T'+5\,.
\end{equation}
The event~$\mathcal{C}_{t_0,\,t_0',\,s_0',\,\omega_0}$ depends only on the variables~$X_{t,\,e_s}$ with~$(t,\,s)\leqslant(t_0',\,s_0')$ and, conditionally on this event, the event~$\mathcal{E}$ only depends on the variables~$X_{t,\,e_s}$ for~$(t_0',\,s_0')<(t,\,s)<(t_0'+2,\,0)$. What's more, the set~$H$ only depends on~$T'$,~$\omega(T',\,S')$ and~$V$, which allows us to write~$H=H\left(T',\,\omega(T',\,S'),\,V\right)$. Therefore, we have
\begin{align*}
\Proba\big(\,\mathcal{E}\ \big|\ \mathcal{C}_{t_0,\,t_0',\,s_0',\,\omega_0}\,\big)
\ &=\ \prod_{\substack{s>s_0'\\ e_s\in\Edge{\mathcal{M}_n(\omega_0)}}}{\Proba\big(\,X_{t_0',\,e_s}=1\,\big)}\times\prod_{e\in H(t_0',\,\omega_0,\,v_0)}{\Proba\big(\,X_{t_0'+1,\,e}=0\,\big)}\\
&\qquad\qquad\qquad\qquad\times\prod_{e\in\Edge{\mathcal{M}_n(\omega_0)}
\backslash H(t_0',\,\omega_0,\,v_0)}{\Proba\big(\,X_{t_0'+1,\,e}=1\,\big)}\\
\ &\geqslant\ \left(e^{-1/n^a}\right)^{2\left|\Edge{\mathcal{M}_n\left(\omega_0\right)}\right|}\left(1-e^{-1/n^a}\right)^{\abs{H(t_0',\,\omega_0,\,v_0)}}\,.
\end{align*}
Using the upper bound~(\ref{n6}) on~$\abs{\Mn(\omega_0)}$ and the upper bound~(\ref{Mn_cardH2}) on~$\abs{H}$ leads to
$$\Proba\Big(\,\mathcal{E}\ \Big|\ T,\,T',\,S',\,\omega(T',\,S'),\,V\,\Big)
\ \geqslant\ \mathbb{1}_{V=v_0}\exp\left(-\frac{2d(3T'+5)}{n^a}-2K a(\ln n)\left(2T+3\right)^{\frac{d-1}{d}}\right)\,.$$
Taking the conditional expectation with respect to~$(T',\,V)$ and using the fact that~$T\leqslant T'$, we obtain
\begin{equation}
\label{Mn_proba_cond}
\Proba\big(\,\mathcal{E}\,\big|\,T',\,V\,\big)
\ \geqslant\ \mathbb{1}_{V=v_0}\exp\left(-\frac{2d(3T'+5)}{n^a}-2K a(\ln n)\left(2T'+3\right)^{\frac{d-1}{d}}\right)\,.
\end{equation}
\\

\souspreuve{Upper bound on~$T'$:}
It follows from lemma~\ref{Mn_majo_souscritique} that
$$\limsupn\,\frac{1}{n^a}\,\ln \Proba_{p_c/2}\Bigg[\,\abs{\mathcal{M}_n}\,\geqslant\,n^a\bigg(-\ln\left(\frac{p_c}{2}\right)\bigg)\,\Bigg]\ <\ 0\,.$$
Therefore, we have
$$\Proba_{p_c/2}\Bigg[\,\abs{\mathcal{M}_n}\,\geqslant\, n^a\bigg(-\ln\left(\frac{p_c}{2}\right)\bigg)\,\Bigg]
\ =\ o\left(\frac{1}{n^d}\right)\,,$$
and thus, if we take~$\tau_n^+$ defined as in~(\ref{defTau}) then, for~$n$ large enough,
$$\Proba\Big(\,\abs{\Mn\left(\omega(\tau_n^+)\right)}\,\geqslant\,\tau_n^+\Big)
\ \leqslant\ 
\Proba_{p_c/2}\Bigg[\,\abs{\mathcal{M}_n}\,\geqslant\,n^a\bigg(-\ln\left(\frac{p_c}{2}\right)\bigg)\,\Bigg]
\ \leqslant\ \frac{1}{2n^d}\,.$$
We then have, using the fact that~$\Proba\big(V=v_0\big)\geqslant 1/n^d$,
$$\Proba\Big(\,V=v_0\ \text{and}\ \abs{\Mn\left(\omega(\tau_n^+)\right)}\,<\,\tau_n^+\,\Big)
\ \geqslant\ \frac{1}{n^d}-\frac{1}{2n^d}
\ =\ \frac{1}{2n^d}\,.$$
Yet, if~$V=v_0$ and~$\abs{\Mn\left(\omega(\tau_n^+)\right)}< \tau_n^+$, then inequality~(\ref{Mn_sup_biss}) entails that~$T'< \tau_n^+$. 
From this we can deduce that, for~$n$ large enough,
$$\Proba\Big(\,V=v_0\ \text{and}\ T'\,<\,\tau_n^+\,\Big)
\ \geqslant\ \frac{1}{2n^d}\,.$$
Therefore, we can find~$\kappa\geqslant 2$ such that, for~$n$ large enough,
\begin{equation}
\label{Mn_kappa}
\Proba\Big(\,V=v_0\ \text{and}\ T'\,\leqslant\,\kappa n^a\,\Big)\ \geqslant\ \frac{1}{2n^d}\,.
\end{equation}
\\

\souspreuve{Conclusion:}
Combining~(\ref{Mn_proba_cond}) and~(\ref{Mn_kappa}) yields
\begin{align*}
\Proba\Big(\,\mathcal{E}\cap\acc{V=v_0}\,\Big)
\ &\geqslant\ \Proba\Big(\,V=v_0\ \text{and}\ T'\,\leqslant\,\kappa n^a\,\Big)\,\Proba\Big(\,\mathcal{E}\ \Big|\ V=v_0\ \text{and}\ T'\,\leqslant\,\kappa n^a\,\Big)\\
\ &\geqslant\ \frac{1}{2n^d}\exp\left(-\frac{2d(3\kappa n^a+5)}{n^a}-2K a(\ln n)\left(2\kappa n^a+3\right)^{\frac{d-1}{d}}\right)\\
\ &\geqslant\ \frac{1}{2n^d}\exp\left(-6d\kappa-\frac{10d}{n^a}-8K \kappa a(\ln n) n^{a(d-1)/d}\right)\,.
\end{align*}
Given~(\ref{lienEnZn}), we obtain
$$\liminf_{n\rightarrow\infty}\,\frac{\ln Z_n}{(\ln n)n^{a(d-1)/d}}
\ \geqslant\ \liminf_{n\rightarrow\infty}\,\frac{\ln \Proba\big(\acc{V=v_0}\cap\mathcal{E}\big)}{(\ln n)n^{a(d-1)/d}}
\ \geqslant\ -8K\kappa a
\ >\ -\infty\,,$$
which is the required lower bound.
\end{proof}

\subsection{Proof of the convergence result}

We now explain how the case~($ii$) of theorem~\ref{thm_clusters_convergence} follows from the above lemmas.

\begin{proof}[Proof of theorem~\ref{thm_clusters_convergence}, case~($ii$)]
Let~$\varepsilon>0$ and~$a\in(d-1,\,d)$.
Exactly as in section~\ref{section_Cmax_preuve_resultat}, the upper bounds
$$\limsupn\,\frac{1}{n^a}\,\ln\mu_n\big(p_n<p_c-\varepsilon\big)
\ <\ 0\,,
\qquad
\limsupn\,\frac{1}{n^{d-1}}\,\ln\mu_n\big(p_n>p_c+\varepsilon\big)
\ <\ 0\,,$$
follow from our lower bound on~$Z_n$ (lemma~\ref{Mn_lemme_minoration_Z_n}) and from the results of exponential decay in the subcritical (lemma~\ref{Mn_majo_souscritique}) and supercritical phases (lemma~\ref{Mn_majo_surcritique}).
To obtain the lower bound, we go back to our computation~(\ref{munsurexact}) to write
\begin{equation}
\label{eq491}
\mu_n\big(p_n>p_c+\varepsilon\big)
\ =\ \frac{1}{Z_n}\sum_{t=0}^{t_n^+-1}\Proba_{\varphi_n(t)}\Big(\abs{\Mn}=t \Big)
\ \geqslant\ \Proba_{\varphi_n(t_n^+-1)}\Big(\abs{\Mn}=t_n^+-1 \Big)\,,
\end{equation}
where~$t_n^+$ is still given by~(\ref{defTn}).
We implement now a simplified surgery procedure to force~$\abs{\Mn}=t_n^+-1$ starting from a configuration~$\omega$ such that~$\abs{\Mn(\omega)}>t_n^+-1$.
According to lemma~\ref{Mn_majo_surcritique}, we have
\begin{equation}
\label{probapriori}
\Proba_{\varphi_n(t_n^+-1)}\Big(\abs{\Mn}>t_n^+-1 \Big)
\ \geqslant\ \Proba_{p_c+\varepsilon}\Big(\abs{\Mn}>t_n^+-1 \Big)
\ \cvninfty\ 1\,.
\end{equation}
Let~\smash{$\omega\in\acc{0,1}^{\En}$} be a configuration such that~$\abs{\Mn(\omega)}>t_n^+-1$.
Consider the set~$E$ of the edges of~$\En$ which have exactly one endpoint in~$\partial\Lambda(n)$, which is such that
\begin{equation}
\label{numbord}
\abs{\Mn(\omega_E)}
\ =\ \abs{\partial\Lambda(n)}
\ \leqslant\ 2dn^{d-1}
\ <\ t_n^+-1
\end{equation}
for~$n$ large enough, because~$a>d-1$.
We write~$E=\acc{e_1,\,\ldots,\,e_{\abs{E}}}$ with~$\abs{E}\leqslant 2dn^{d-1}$, and we let
$$B\ =\ \max\Big\{\,b\in\acc{1,\,\ldots,\,\abs{E}}\ :\ \abs{\Mn\big(\omega_{\acc{e_1,\,\ldots,\,e_b}}\big)}\,\geqslant\,t_n^+-1\,\Big\}\,.$$
It follows from~(\ref{numbord}) that~$B<\abs{E}$, whence by maximality of~$B$,
$$\abs{\Mn\big(\omega_{\acc{e_1,\,\ldots,\,e_{B+1}}}\big)}
\ <\ t_n^+-1\,.$$
Therefore, if we write~$e_{B+1}=\acc{x,y}$ with~$x\in\partial\Lambda(n)$ and~$y\notin\partial\Lambda(n)$, and if we consider the cluster which is disconnected from the boundary when closing this edge~$e_{B+1}$, namely :
$$C_y\ =\ C_{\Lambda(n)}\big(y,\,\omega_{\acc{e_1,\,\ldots,\,e_{B+1}}}\big)\,,$$
we have
$$\abs{\Mn\big(\omega_{\acc{e_1,\,\ldots,\,e_{B+1}}}\big)}
\ =\ \abs{\Mn\big(\omega_{\acc{e_1,\,\ldots,\,e_B}}\big)}-\abs{C_y}\,,$$
so that~$m=(t_n^+-1)-\abs{\Mn\big(\omega_{\acc{e_1,\,\ldots,\,e_{B+1}}}\big)}$ satisfies~$1\leqslant m\leqslant \abs{C_y}$.
Hence, lemma~\ref{lemme_boucher} provides us with~$H_1\subset\En$, of size~\smash{$\abs{H_1}\leqslant \abs{C_y}^{(d-1)/d}\leqslant n^{d-1}$}, such that
$$\abs{C_{\Lambda(n)}\big(y,\,\omega_{\acc{e_1,\,\ldots,\,e_{B+1}}\cup H_1}\big)}\ =\ m\,.$$
Writing~$H=H(\omega)=\acc{e_1,\,\ldots,\,e_{B}}\cup H_1$, we then have~$\abs{H}\leqslant (2d+1)n^{d-1}$ and
$$\abs{\Mn\big(\omega_H\big)}\ =\ \abs{\Mn\big(\omega_{\acc{e_1,\,\ldots,\,e_{B+1}}}\big)}+m
\ =\ t_n^+-1\,.$$
Using lemma~6.3 in~\cite{CerfPisztora2000}, we can deduce that
$$\Proba_{\varphi_n(t_n^+-1)}\Big(\abs{\Mn}=t_n^+-1 \Big)
\ \geqslant\ \left(\frac{1}{C_n}\right)^{(2d+1)n^{d-1}}\times\Proba_{\varphi_n(t_n^+-1)}\Big(\abs{\Mn}>t_n^+-1 \Big)\,,$$
where
$$C_n\ =\ \left(1\vee\frac{\varphi_n(t_n^+-1)}{1-\varphi_n(t_n^+-1)}\right)\abs{E_n}
\ =\ O\big(n^d\big)\,.$$
Plugging this into~(\ref{eq491}) and using~(\ref{probapriori}) then yields
$$\liminfn\,\frac{1}{(\ln n)n^{d-1}}\,\ln\,\mu_n\big(\abs{p_n-p_c}>\varepsilon\big)
\ \geqslant\ \liminfn\,\frac{1}{(\ln n)n^{d-1}}\,\ln\,\mu_n\big(p_n>p_c+\varepsilon\big)
\ \geqslant\ -(2d+1)d
\ >\ -\infty\,,$$
as announced in theorem~\ref{thm_clusters_convergence}.
\end{proof}

\section{Proof of case (\textit{iii}) of theorem~\ref{thm_clusters_convergence}}
\label{sectionBn}

The goal of this section is to prove the remaining part of theorem~\ref{thm_clusters_convergence}, namely the case~($iii$), where the function~$p_n$ is defined by
$$p_n(\omega)\ =\ \exp\left(-\frac{\abs{B_n^b(\omega)}}{n^a}\right)\,,$$
where~$a$ and~$b$ are two fixed parameters such that~$0<b<a<d$, and
$$B_n^b(\omega)\,=\,\Big\{\,x\in\Lambda(n)\ :\  \abs{C(x,\,\omega)}\geqslant n^b\,\Big\}\,.$$
In subsection~\ref{prthmvitesse}, we will also obtain the estimate on the convergence speed announced in theorem~\ref{thm_clusters_vitesse}, which depends on the existence of the critical exponents~$\beta$ and~$\gamma$.
The rest of the section is organized as the two previous sections, with first the large deviation estimates far from~$p_c$ and then the lower bound on~$Z_n$ (with, this time, two different lower bounds).

\subsection{Exponential decay in the subcritical phase}
\label{subBnsous}

We now prove the following exponential decay in the subcritical regime :

\begin{lemma}
\label{lemme_majo_souscritique}
For every~$p<p_c$ and any~$A>0$, we have
$$-\infty\ <\ 
\liminf\limits_{n\rightarrow\infty}\,\frac{1}{n^{a}}\ln\Proba_p\Big(\abs{B_n^b}>An^a\Big)
\ \leqslant\ 
\limsup\limits_{n\rightarrow\infty}\,\frac{1}{n^{a}}\ln\Proba_p\Big(\abs{B_n^b}>An^a\Big)\ <\ 0\,.$$
\end{lemma}

\begin{proof}
Let~$p<p_c$ and~$A>0$.
Writing~$N_n=1+\Ent{An^{a-b}}$ and using the BK inequality as in the proof of lemma~\ref{Mn_majo_souscritique}, we get
\begin{align*}
\Proba_p\Big(\abs{B_n^b}>An^a\Big)
\,&\leqslant\,
\sum_{k=1}^{N_n}\,
\sum_{x_1,\,\ldots,\,x_k\in\Lambda(n)}
\sum_{\substack{n^b\leqslant n_1,\,\ldots,\,n_k\leqslant n^d\\n_1+\cdots+n_k>An^a}}
\Proba_p\Big(\acc{\abs{C_{\Lambda(n)}(x_1)}\geqslant n_1}\circ\cdots\circ\acc{\abs{C_{\Lambda(n)}(x_k)}\geqslant n_k}\Big)\\
\ &\leqslant\ 
\sum_{k=1}^{N_n}\,
\sum_{x_1,\,\ldots,\,x_k\in\Lambda(n)}
\,\sum_{\substack{n^b\leqslant n_1,\,\ldots,\,n_k\leqslant n^d\\n_1+\cdots+n_k>An^a}}
\prod_{i=1}^ke^{-\lambda(p)n_i}\\
\ &\leqslant\ N_n\big(n^d\big)^{2N_n} e^{-\lambda(p)An^a}\,.
\end{align*}
Therefore, we obtain
$$\frac{1}{n^{a}}\ln\Proba_p\Big(\abs{B_n^b}>An^a\Big)
\ \leqslant\ \frac{\ln N_n}{n^a}+\frac{2N_nd\ln n}{n^a}-\lambda(p)A
\ \cvninfty\ -\lambda(p)A\ <\ 0\,,$$
which proves the upper bound. The lower bound follows from the lower bound given by lemma~\ref{Cmax_majo_souscritique}, since~$An^a\geqslant n^b$ for~$n$ large enough.
\end{proof}

\subsection{Exponential decay in the supercritical phase}
\label{subBnsur}

We now deal with the deviations in the regime~$p>p_c$.
We wish to thank an anonymous referee for having improved our proof, leading to a better (and in fact optimal) exponent.
\begin{lemma}
\label{lemme_majo_surcritique}
We have the upper bound
$$\forall p>p_c\quad
\forall A>0\qquad
\limsup\limits_{n\rightarrow\infty}\,\frac{1}{n^{d-b/d}}\ln\Proba_p\Big(\abs{B_n^b}< An^a\Big)\ <\ 0\,.$$
\end{lemma}

\begin{proof}
Let~$p>p_c$ and~$A>0$.
We shall partition the box~$\Lambda(n)$ into hypercubic boxes of side
$$N_n\ =\ \Ceil{\left(\frac{8n^b}{\theta(p)}\right)^{1/d}}\,.$$
We let
$$M_n\ =\ \min\Bigg\{\,m\in\N\ :\ \Lambda(n)\subset\bigcup_{j\in\Lambda(m)}\big(N_nj+\Lambda(N_n)\big)\,\Bigg\}\,,$$
so that we have a partition
$$\Lambda(n)\ =\ \bigsqcup_{j\in\Lambda(M_n)}\Big[\big(N_nj+\Lambda(N_n)\big)\cap\Lambda(n)\Big]\,.$$
By definition of~$B_n^b$, we have
\begin{align*}
\abs{B_n^b}
\ &=\ \abs{\Big\{x\in \Lambda(n)\ :\ \abs{C_{\Lambda(n)}(x)}\geqslant n^b\Big\}}\\
\ &\geqslant\ n^b\abs{\bigg\{\,j\in\Lambda(M_n)\ :\ \abs{C_{max}\Big[\big(N_nj+\Lambda(N_n)\big)\cap\Lambda(n)\Big]}\,\geqslant\, n^b\,\bigg\}}\,.
\end{align*}
Now note that
$$\frac{n^b\abs{\Lambda(M_n)}}{2}
\ =\ \frac{n^bM_n^d}{2}
\ \eqninfty\ \frac{n^b}{2}\left(\frac{n}{N_n}\right)^d
\ \eqninfty\ \frac{n^{b+d}\theta(p)}{16n^b}
\ =\ \frac{\theta(p)n^d}{16}\,.$$
Given that~$An^a=o(n^d)$, this implies that, for~$n$ large enough,
$$An^a\ <\ \frac{n^b\abs{\Lambda(M_n)}}{2}\,.$$
Therefore, we have the following implication :
\begin{align*}
\abs{B_n^b}\,<\,An^a
&\quadimplique \abs{\bigg\{\,j\in\Lambda(M_n)\ :\ \abs{C_{max}\Big[\big(N_nj+\Lambda(N_n)\big)\cap\Lambda(n)\Big]}\,\geqslant\, n^b\,\bigg\}}\,<\,\frac{\abs{\Lambda(M_n)}}{2}\\
&\quadimplique \abs{\bigg\{\,j\in\Lambda(M_n)\ :\ \abs{C_{max}\Big[\big(N_nj+\Lambda(N_n)\big)\cap\Lambda(n)\Big]}\,<\, n^b\,\bigg\}}\,\geqslant\,\frac{\abs{\Lambda(M_n)}}{2}\,.
\end{align*}
The problem now is that the boxes on the boundaries might be truncated. However, the inside boxes are full, that is to say
$$\forall j\in\Lambda(M_n-2)\qquad
N_nj+\Lambda(N_n)\ \subset\ \Lambda(n)\,.$$
Yet the number of boxes on the boundaries is
$$\big|\Lambda(M_n)\backslash\Lambda(M_n-2)\big|\ =\ o\big(\abs{\Lambda(M_n)}\big)\,,$$
so that we have, for~$n$ large enough,
$$\abs{B_n^b}\,<\,An^a
\quadimplique \abs{\Big\{\,j\in\Lambda(M_n-2)\ :\ \abs{C_{max}\big(N_nj+\Lambda(N_n)\big)}\,<\, n^b\,\Big\}}\,\geqslant\,\frac{\abs{\Lambda(M_n)}}{4}\,.$$
Using the independence of the sizes of the largest cluster inside disjoint boxes and the fact that the number of choices of at least~$\abs{\Lambda(M_n)}/4$ boxes is at most~\smash{$2^{\abs{\Lambda(M_n)}}$}, we get
$$\Proba_p\Big(\abs{B_n^b}< An^a\Big)
\ \leqslant\ 2^{M_n^d}\Proba_p\Big(\abs{C_{max}\big(\Lambda(N_n)\big)}\,<\,n^b\,\Big)^{M_n^d/4}\,,$$
which implies that
$$\limsupn\,\frac{1}{n^{d-b/d}}\ln\Proba_p\Big(\abs{B_n^b}< An^a\Big)
\ \leqslant\ \limsupn\,\left[\,\frac{M_n^d\ln 2}{n^{d-b/d}}+\frac{M_n^d}{4n^{d-b/d}}\,\ln\Proba_p\Big(\abs{C_{max}\big(\Lambda(N_n)\big)}\,<\,n^b\,\Big)\,\right]\,.$$
Now note that
$$\frac{M_n^d}{n^{d-b/d}}\ \eqninfty\ \frac{(n/N_n)^d}{n^{d-b/d}}
\ \eqninfty\ \frac{n^{b/d}}{N_n}\frac{1}{N_n^{d-1}}
\ \eqninfty\ \left(\frac{\theta(p)}{8}\right)^{1/d}\frac{1}{N_n^{d-1}}\,.$$
Therefore, we obtain
$$\limsupn\,\frac{1}{n^{d-b/d}}\ln\Proba_p\Big(\abs{B_n^b}< An^a\Big)
\ \leqslant\ \frac{1}{4}\left(\frac{\theta(p)}{8}\right)^{1/d}\limsupn\,\frac{1}{N_n^{d-1}}\,\ln\Proba_p\Big(\abs{C_{max}\big(\Lambda(N_n)\big)}\,<\,n^b\,\Big)\,.$$
The result then follows from lemma~\ref{CmaxMajoStronger}, noting that~$n^b\leqslant\theta(p)N_n^d/8$.
\end{proof}

\subsection{Two lower bounds on the partition function}
\label{sectionBnminoZn}

It remains to prove a lower bound on the normalization constant~$Z_n$.
Adapting the technique of lemmas~\ref{Cmax_lemme_minoration_Z_n} and~\ref{Mn_lemme_minoration_Z_n}, we can easily obtain a bound with an exponent~$b$.
This is done in lemma~\ref{lemme_minoration_Z_n}, and the proof is much simpler than in the previous sections because, instead of performing a surgery step, we only \guillemets{freeze} the edges of~$B_n^b$ during a certain number of steps.
But this bound may not be sufficient to outweigh the bound in the supercritical phase, since it may be the case that~$b>d-b/d$.
To solve this problem, we show an other lower bound in lemma~\ref{lemme_minoration_Z_n_bis} with a different exponent, using a more geometrical technique.

\begin{lemma}
\label{lemme_minoration_Z_n}
For every~$a$ and~$b$ such that~$0<b<a<d$, we have
$$\liminf\limits_{n\rightarrow\infty}\,\,\frac{\ln Z_n}{n^{b}}\ >\ -\infty\,.$$
\end{lemma}

\begin{proof}
We use the same monotone coupling~$(\omega(t,s))_{0\leqslant t\leqslant n^d,\,0\leqslant s\leqslant r}$ as in the proof of lemma~\ref{Cmax_lemme_minoration_Z_n}.
Following a strategy similar to that of lemma~\ref{Cmax_lemme_minoration_Z_n}, we define
\begin{equation}
\label{defTS}
(T,\,S)\ =\ \min\Big\{\,
(t,\,s)\in\acc{0,\,\ldots,\,n^{d}-2}\times\acc{0,\,\ldots,\,r}\quad :\quad
\abs{B_n^b\big(\omega(t,\,s)\big)}\leqslant t+1+2n^b\,\Big\}\,.
\end{equation}
When closing one single edge,~$\abs{B_n^b}$ cannot decrease by more than~$2n^b$ (in the worst case, the edge cuts a cluster of~\smash{$2\Ceil{n^b}-2$} vertices in two equal parts). Therefore, we always have~$\abs{B_n^b\big(\omega(T,\,S)\big)}\geqslant T+1$.
Thus, if we consider the instant~$T'=\abs{B_n^b\big(\omega(T,\,S)\big)}$, we have~$T+1\leqslant T'\leqslant T+1+2n^b$.
In view of this, our strategy is to force all the edges of~$B_n^b\big(\omega(T,\,S)\big)$ to remain open until the configuration~$\omega(T',\,0)$.
This idea is much simpler than the strategy of the previous sections, because we do not perform any surgery step.
Considering the event
$$\mathcal{E}\ =\ \acc{
\begin{aligned}
&\forall s>S\qquad e_s\in \Edge{B_n^b\big(\omega(T,\,S)\big)}\ \Rightarrow\ X_{T,\,e_s}=1\\
&\forall t\in\acc{T+1,\,\ldots,\,T'-1}\quad \forall e\in \Edge{B_n^b\big(\omega(T,\,S)\big)}\qquad  X_{t,\,e}=1
\end{aligned}
}\,,$$
equation~(\ref{Cmax_expression_Z_n}) becomes
\begin{equation}
\label{Zn874}
Z_n
\ \geqslant\ \Proba\Big(\,\abs{B_n^b\big(\omega(T',\,0)\big)}\ =\ \abs{B_n^b\big(\omega(T,\,S)\big)}\ =\ T'\,\Big)
\ \geqslant\ \Proba\big(\mathcal{E}\big)\,.
\end{equation}
A lower bound on the probability of~$\mathcal{E}$ is easily obtained by writing
\begin{align*}
\Proba\big(\mathcal{E}\,\big|\,(T,\,S,\,\omega(T,\,S))\big)
\ &=\ \prod_{\substack{s>S\\e_s\in \Edge{B_n^b\left(\omega(T,\,S)\right)}}}\Proba\big(X_{T,\,e_s}=1\big)
\prod_{t=T+1}^{T'-1}\prod_{e\in \Edge{B_n^b\left(\omega(T,\,S)\right)}}\Proba\big(X_{t,\,e}=1\big)\\
\ &\geqslant\ \Big(e^{-1/n^a}\Big)^{(T'-T)\abs{\Edge{B_n^b\left(\omega(T,\,S)\right)}}}
\ \geqslant\ \Big(e^{-1/n^a}\Big)^{(2n^b+1)d(T+1+2n^b)}\,.\numberthis\label{eq761}
\end{align*}
We then show an upper bound on~$T$, using the same technique as in the proof of lemma~\ref{Cmax_lemme_minoration_Z_n}. With~$\tau_n^+$ defined as in~(\ref{defTau}), we can write
$$\Proba\big(T\leqslant \tau_n^+\big)
\ \geqslant\ \Proba\Big(\abs{B_n^b\big(\omega(\tau_n^+)\big)}\leqslant \tau_n^++1+2n^b\Big)
\ \geqslant\ \Proba_{p_c/2}\Big(\abs{B_n^b}\leqslant \tau_n^++1+2n^b\Big)
\ \cvninfty\ 1\,,$$
thanks to lemma~\ref{lemme_majo_souscritique}.
Combining this with~(\ref{Zn874}) and~(\ref{eq761}) then leads to
$$\frac{\ln Z_n}{n^b}
\ \geqslant\ \frac{\ln \Proba\big(T\leqslant \tau_n^+\big)}{n^b}+\frac{\ln\Proba\big(\mathcal{E}\,\big|\,T\leqslant \tau_n^+\big)}{n^b}
\ \geqslant\ o(1)-\frac{(2n^b+1)d(\tau_n^++1+2n^b)}{n^{a+b}}
\ \cvninfty\ -2d\left(-\ln\left(\frac{p_c}{2}\right)\right)\,,$$
using that~$\tau_n^+\sim(-\ln(p_c/2))n^a$.
\end{proof}

We now state the other lower bound we obtain using a more geometrical technique:

\begin{lemma}
\label{lemme_minoration_Z_n_bis}
For every~$a$ and~$b$ such that~$0<b<a<d$, we have
$$\liminf\limits_{n\rightarrow\infty}\,\,\frac{\ln Z_n}{(\ln n)n^{c}}\ >\ -\infty
\qquadou
c\ =\ \left(1-\frac{a}{d}+\frac{b}{d}\right)\vee\left(a-\frac{a}{d}\right)\,.$$
\end{lemma}

\souspreuve{Proof outline:}
We use again the same coupling~$(\omega(t,s))_{t,s}$ as in the previous proof, and the same instant~$(T,\,S)$.
We then want to close edges to reach a fixed point, but the problem is that it is not always possible to do so by only closing edges.
Imagine for example that, in the configuration~$\omega(T,\,S)$, no cluster contains more than~$\Ceil{n^b}$ vertices, meaning that~$B_n^b$ is only made of clusters containing exactly~$\Ceil{n^b}$ vertices.
Then, in this very unfavourable situation, closing edges either does not affect~$\abs{B_n^b}$ or it diminishes~$\abs{B_n^b}$ by~$\Ceil{n^b}$, thus we cannot finely tune~$B_n^b$ only by closing edges.
To circumvent this problem, we will change what happened before the instant~$(T,\,S)$ so as to ensure that, at this instant, we have at our disposal a cluster containing at least~$2n^b$ vertices.
This will enable us to use our surgery procedure on this cluster, in order to reach the exact desired value for~$\abs{B_n^b}$.
However, to do so, we need to intervene on the past of the instant~$(T,\,S)$, which will make notations more complicated.
Namely, we will define a second coupling of configurations~$(\omega'(t,s))_{t,s}$ which is a copy of the first coupling, except that the closure times of a certain number of edges are drawn again, allowing us to close or to open these edges at different times.
Before diving into the proof, we precise our surgery procedure in the following two lemmas.
The first one is a lower bound on the number of edges we need to reopen to create a cluster of size at least~$2n^b$.

\begin{lemma}
\label{lemmeCmaxdouble}
Let~$b\in(0,d)$.
For every~$n\geqslant 1$ and for any configuration~\smash{$\omega\in\acc{0,1}^{\En}$} such that~\smash{$\abs{B_n^b(\omega)}\geqslant 2^{d+1}n^b$}, there exists a set of edges~$H\subset\En$, such that
$$\abs{C_{max}\big(\omega^H\big)}\ \geqslant\ 2n^b
\qquadet
\abs{H}\ \leqslant\ 4d\frac{n^{1+b/d}}{\abs{B_n^b(\omega)}^{1/d}}\,.$$
\end{lemma}

\begin{proof}
Let~$b\in(0,d)$ and~\smash{$\omega\in\acc{0,1}^{\En}$} such that~\smash{$\abs{B_n^b(\omega)}\geqslant 2^{d+1}n^b$}.
If~$\abs{C_{max}(\omega)}\ \geqslant\ 2n^b$, then we choose~$H=\varnothing$.
Let us now assume that~$\abs{C_{max}(\omega)}<2n^b$.
Then all the clusters in~$B_n^b(\omega)$ contain between~$n^b$ and~$2n^b$ vertices.
Therefore, there are at least~\smash{$\abs{B_n^b(\omega)}/(2n^b)$} such clusters in the configuration~$\omega$.
We divide the box~$\Lambda(n)$ into hypercubic boxes of side
$$L_n\ =\ \Ceil{2n\left(\frac{2n^b}{\abs{B_n^b(\omega)}}\right)^{1/d}}\,,$$
with boxes which may be smaller along the boundaries of~$\Lambda(n)$. The number of boxes is at most
$$\Ceil{\frac{n}{L_n}}^d
\ \leqslant\ \Ceil{\demi\left(\frac{\abs{B_n^b(\omega)}}{2n^b}\right)^{1/d}}^d
\ <\ \left[\demi\left(\frac{\abs{B_n^b(\omega)}}{2n^b}\right)^{1/d}+1\right]^d
\ \leqslant\ \frac{\abs{B_n^b(\omega)}}{2n^b}\,.$$
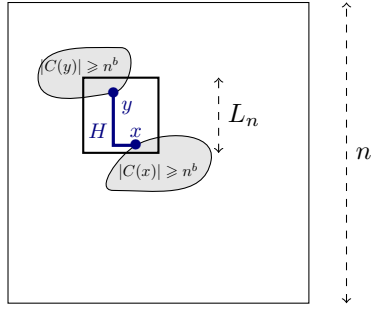
\begin{figure}
\begin{center}
\begin{tikzpicture}
\draw (0,0) -- (0,4) -- (4,4) -- (4,0) -- cycle;
\draw[<->,dashed] (4.5,0) -- node[midway,right]{$n$} (4.5,4);
\filldraw[fill=gray!20!white] (1.7,2.1) to[out=30, in=80] (2.7,1.9) to[out=-100,in=0] (1.4,1.5) to[out=180,in=-150] (1.7,2.1);
\filldraw[fill=gray!20!white] (1.4,2.8) to[out=10,in=0] (1.5,3.4) to[out=180,in=90] (0.4,3) to[out=-90,in=-170] (1.4,2.8);
\draw[thick] (1,2) --(2,2) -- (2,3) -- (1,3) -- cycle;
\draw[very thick,blue!50!black] (1.7,2.1) node{$\bullet$} node[above,scale=0.8]{$x$} -- (1.4,2.1) -- (1.4,2.8) node{$\bullet$} node[below right,scale=0.8]{$y$};
\draw[blue!50!black] (1.2,2.3) node[scale=0.8]{$H$};
\draw (2,1.75) node[scale=0.6]{$\abs{C(x)}\geqslant n^b$};
\draw (0.95,3.15) node[scale=0.6]{$\abs{C(y)}\geqslant n^b$};
\draw[<->,dashed] (2.8,2) -- node[midway,right]{$L_n$} (2.8,3);
\end{tikzpicture}
\caption{If there is no cluster with size~$\geqslant 2n^b$, we reconstitute one by joining two clusters of size at least~$n^b$, with a path using less than~$dL_n$ edges.}
\end{center}
\end{figure}

\noindent Hence, there are strictly less boxes than the number of clusters with size at least~$n^b$.
Therefore, by the pigeonhole principle, at least one of these boxes must intersect two such clusters, which means that we can find~$x,\,y\in\Lambda(n)$ such that
$$\abs{C(x,\,\omega)}\ \geqslant\ n^b\,,
\qquad
\abs{C(y,\,\omega)}\ \geqslant\ n^b\,,
\qquad
x\stackrel{\omega}{\centernot{\longleftrightarrow}}y
\qquadet
\norme{x-y}_{\infty}\ \leqslant\ L_n\,.$$
We then have~$\norme{x-y}_1\leqslant dL_n$, implying that there exists a path~$H\subset\En$ with at most~$dL_n$ edges which connects~$x$ and~$y$.
Opening the edges of this path in~$\omega$ creates a connection between two different clusters of size at least~$n^b$, whence~\smash{$\abs{C_{max}\big(\omega^H\big)}\geqslant 2n^b$}.
What's more, we have
$$\abs{H}
\ \leqslant\ dL_n
\ \leqslant\ d\left(\frac{2^{1+1/d}n^{1+b/d}}{\abs{B_n^b(\omega)}^{1/d}}+1\right)
\ \leqslant\ \big(2^{1+1/d}+1\big)d\frac{n^{1+b/d}}{\abs{B_n^b(\omega)}^{1/d}}
\ \leqslant\ 4d\frac{n^{1+b/d}}{\abs{B_n^b(\omega)}^{1/d}}\,,$$
which completes the proof of this lemma.
\end{proof}

The second geometrical lemma will tell us how many edges we need to close to adjust the size of~$B_n^b$:

\begin{lemma}
\label{lemmeSurgeryBnb}
Let~$b\in(0,d)$.
There exists~$K_1=K_1(d)>0$ such that, for~$n\geqslant 1$, for any configuration~\smash{$\omega\in\acc{0,1}^{\En}$} and any~$s\in\N$, if
\begin{equation}
\label{hyplem}
12n^b\ \leqslant\ s\ \leqslant\ \abs{B_n^b(\omega)}\ \leqslant\ s+6n^b
\qquadet
\abs{C_{max}(\omega)}\ \geqslant\ 2n^b\,,
\end{equation}
then there exists~$H\subset\En$ such that
$$\abs{B_n^b\big(\omega_H\big)}\ =\ s
\qquadet
\abs{H}\ \leqslant\ K_1s^{\frac{d-1}{d}}\,.$$
\end{lemma}

\souspreuve{Sketch of the proof:}
If we have at our disposal a big enough cluster, then we may reach~$\abs{B_n^b}=s$ by only closing edges of this cluster.
In this case, using the geometrical results of section~\ref{section_intermede}, the idea is to cut~$C_{max}(\omega)$ into one large piece of size~$m\geqslant n^b$ and remaining pieces all of size~$< n^b$ (see the left part of figure~\ref{surgeryBn}).
Adjusting the cutting so that~$m=\abs{C_{max}(\omega)}-\abs{B_n^b(\omega)}+s$ yields the desired result.
However, for this technique to work, we need~$m$ to be greater than~$n^b$, so that the large piece of size~$m$ still belongs to~$B_n^b$ after the cutting.
This is the case if~$\abs{C_{max}(\omega)}\geqslant 7n^b$, because then~$m\geqslant 7n^b-6n^b=n^b$.
In the case where~$\abs{C_{max}(\omega)}< 7n^b$, we proceed differently.
In this case, we first cut other intermediate clusters, to reach a situation where~$s\leqslant \abs{B_n^b}< s+n^b$ (see the right part of figure~\ref{surgeryBn}).
Then, we can use~$C_{max}(\omega)$ to reach exactly~$\abs{B_n^b}=s$, by disconnecting~$\abs{B_n^b}-s$ vertices from~$C_{max}(\omega)$.
Because we have assumed that~$\abs{C_{max}(\omega)}\geqslant 2n^b$ and~$\abs{B_n^b}-s<n^b$, we can ensure that the resulting cluster still contains at least~$n^b$ vertices, and thus still belongs to~$B_n^b$.

\begin{proof}
Let us now implement the strategy presented above.
Let~$b\in(0,d)$,~$n\geqslant 1$,~\smash{$\omega\in\acc{0,1}^{\En}$} and~$s\in\N$ such that~(\ref{hyplem}) holds.
We distinguish between two cases :

\point\textbf{First case:} Assume that~$\abs{C_{max}(\omega)}\geqslant 7n^b$.
Letting~$m=\abs{C_{max}(\omega)}-\abs{B_n^b(\omega)}+s$, we can use lemma~\ref{lemme_boucher} to find~$H_1\subset \En$ with
$$\abs{H_1}
\ \leqslant\ K\abs{C_{max}(\omega)}^{(d-1)/d}
\ \leqslant\ K\big(s+6n^b\big)^{(d-1)/d}
\ \leqslant\ 2Ks^{(d-1)/d}$$
and such that closing the edges of~$H_1$ divides~$C_{max}(\omega)$ into one connected component of size exactly~$m$, and one or several other pieces, whose total size is~$\abs{C_{max}(\omega)}-m=\abs{B_n^b(\omega)}-s\leqslant 6n^b$.
Using the butcher's lemma (lemma~\ref{lemme_sandwich_jambon}), this remaining part can be cut into pieces smaller than~$3n^b$.
Using again the butcher's lemma on the connected subpieces which contain strictly more than~$(3/2)n^b$ vertices (there are at most~$3$ such subpieces), we can cut them into pieces smaller than~$(3/2)n^b$.
Repeating the operation on the pieces containing strictly more than~$(3/4)n^b$ vertices (there are at most~$7$ such subpieces), we can cut them into pieces smaller than~$(3/4)n^b$.
Thus, using at most~$11$ times the butcher's lemma, we obtain~$H_2\subset\En$ such that~\smash{$\abs{H_2}\leqslant 11\times 4^{d+1}d^2\big(6n^b\big)^{(d-1)/d}$} and such that in the configuration~$\omega_{H_1\cup H_2}$, the vertices of~$C_{max}(\omega)$ are separated into one cluster of size exactly~$m$ and the remaining clusters which are all smaller than~$(3/4)n^b<n^b$ (see the left part of figure~\ref{surgeryBn}).
Therefore, writing~$H=H_1\cup H_2$ and using that
$$m
\ =\ \abs{C_{max}(\omega)}-\abs{B_n^b(\omega)}+s
\ \geqslant\ 7n^b-6n^b\ =\ n^b\,,$$
we obtain
$$\abs{B_n^b\big(\omega_{H}\big)}
\ =\ \abs{B_n^b(\omega)}-\abs{C_{max}(\omega)}+m
\ =\ s$$
and
$$\abs{H}\ \leqslant\ 2Ks^{(d-1)/d}+11\times 4^{d+1}d^2\big(6n^b\big)^{(d-1)/d}
\ \leqslant\ \big(2K+11\times 4^{d+1}d^2\big)s^{(d-1)/d}\,.$$
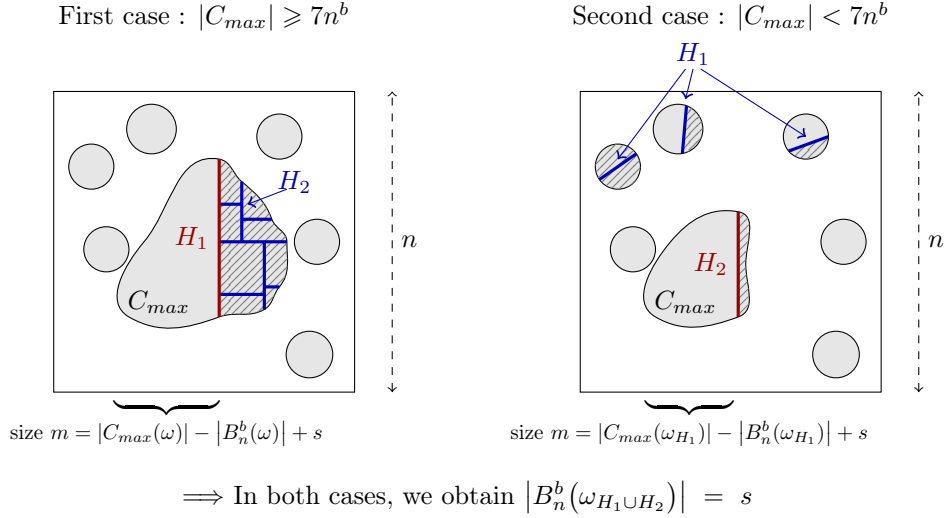
\begin{figure}
\begin{center}
\begin{tikzpicture}
\draw (0,0) -- (0,4) -- (4,4) -- (4,0) -- cycle;
\draw[<->,dashed] (4.5,0) -- node[midway,right]{$n$} (4.5,4);
\filldraw[fill=gray!20!white] (1,1) to[out=-30,in=-160] (2.2,1) to[out=20,in=-150] (2.8,1.1) to[out=30,in=-130] (3,1.4) to[out=50,in=-90] (3.1,2) to[out=90,in=-70] (2.9,2.3) to[out=110,in=-40] (2.5,2.8) to[out=140,in=-10] (2.2,3.1) to[out=170,in=50] (1.2,2) to[out=-130,in=150] (1,1);
\fill[pattern=north east lines, pattern color=gray] (2.2,1) to[out=20,in=-150] (2.8,1.1) to[out=30,in=-130] (3,1.4) to[out=50,in=-90] (3.1,2) to[out=90,in=-70] (2.9,2.3) to[out=110,in=-40] (2.5,2.8) to[out=140,in=-10] (2.2,3.1) to[out=-90,in=90] (2.2,1);
\draw (1.5,-0.2) node{$\underbrace{\qquad\qquad}$};
\draw (1.5,-0.55) node[scale=0.8]{size $m=\abs{C_{max}(\omega)}-\abs{B_n^b(\omega)}+s$};
\draw[very thick,blue!70!black] (3.1,2) -- (2.2,2) (2.8,1.1) -- (2.8,2) (2.5,2) -- (2.5,2.8) (3,1.4) -- (2.8,1.4) (2.9,2.3) -- (2.5,2.3) (2.2,1.3) -- (2.8,1.3) (2.2,2.5) --(2.5,2.5);
\draw[blue!70!black] (3.2,2.8) node{$H_2$};
\draw[blue!70!black,decoration={markings,mark=at position 1 with {\arrow[thick]{>}}},postaction={decorate}] (3.1,2.7) -- (2.58,2.5);
\draw[very thick,red!60!black] (2.2,1) -- node[midway,left]{$H_1$} (2.2,3.1);
\filldraw[fill=gray!20!white] (0.5,3) circle(0.3) (3,3.4) circle(0.3) (1.3,3.5) circle(0.33) (3.5,2) circle(0.3) (3.4,0.5) circle(0.31) (0.7,1.9) circle(0.3);
\draw (1.4,1.2) node{$C_{max}$};
\draw (2,5) node{First case : $\abs{C_{max}}\geqslant 7n^b$};
\draw (5.5,-1.4) node{$\Longrightarrow$ In both cases, we obtain $\abs{B_n^b\big(\omega_{H_1\cup H_2}\big)}\ =\ s$};

\begin{scope}[xshift=7cm]
\draw (0,0) -- (0,4) -- (4,4) -- (4,0) -- cycle;
\draw[<->,dashed] (4.5,0) -- node[midway,right]{$n$} (4.5,4);
\filldraw[fill=gray!20!white] (1,1) to[out=-30,in=-160] (2.1,1) to[out=20,in=-90] (2.2,1.7) to[out=90,in=-10] (2.1,2.4) to[out=170,in=50] (1.2,2) to[out=-130,in=150] (1,1);
\draw (1.45,-0.2) node{$\underbrace{\qquad\quad\ }$};
\draw (1.45,-0.55) node[scale=0.8]{size $m=\abs{C_{max}(\omega_{H_1})}-\abs{B_n^b(\omega_{H_1})}+s$};
\fill[pattern=north east lines, pattern color=gray] (2.1,1) to[out=20,in=-90] (2.2,1.7) to[out=90,in=-10] (2.1,2.4) to[out=-90,in=90] (2.1,1);
\draw[very thick,red!60!black] (2.1,1) -- node[midway,left]{$H_2$} (2.1,2.4);
\filldraw[fill=gray!20!white] (0.5,3) circle(0.3) (3,3.4) circle(0.3) (1.3,3.5) circle(0.33) (3.5,2) circle(0.3) (3.4,0.5) circle(0.31) (0.7,1.9) circle(0.3);
\fill[pattern=north east lines, pattern color=gray] (0.5,3) circle(0.3);
\draw[very thick,blue!70!black] ({0.5-0.3*cos(35)},{3-0.3*sin(35)}) -- ({0.5+0.3*cos(35)},{3+0.3*sin(35)});
\fill[pattern=north east lines, pattern color=gray] ({1.3-0.33*cos(100)},{3.5-0.33*sin(100)}) -- ({1.3+0.33*cos(70)},{3.5+0.33*sin(70)}) arc(70:-80:0.33);
\draw[very thick,blue!70!black] ({1.3-0.33*cos(100)},{3.5-0.33*sin(100)}) -- ({1.3+0.33*cos(70)},{3.5+0.33*sin(70)});
\fill[pattern=north east lines, pattern color=gray] (3.3,3.4) -- ({3+0.3*cos(-140)},{3.4+0.3*sin(-140)}) arc(-140:0:0.3);
\draw[very thick,blue!70!black] (3.3,3.4) -- ({3+0.3*cos(-140)},{3.4+0.3*sin(-140)});
\draw[blue!70!black] (1.5,4.5) node{$H_1$};
\draw[blue!70!black,decoration={markings,mark=at position 1 with {\arrow[thick]{>}}},postaction={decorate}] (1.4,4.3) -- (0.5,3.1);
\draw[blue!70!black,decoration={markings,mark=at position 1 with {\arrow[thick]{>}}},postaction={decorate}] (1.5,4.3) -- (1.4,3.9);
\draw[blue!70!black,decoration={markings,mark=at position 1 with {\arrow[thick]{>}}},postaction={decorate}] (1.6,4.3) -- (3,3.4);
\draw (1.4,1.2) node{$C_{max}$};
\draw (2,5) node{Second case : $\abs{C_{max}}< 7n^b$};
\end{scope}
\end{tikzpicture}
\caption{\label{surgeryBn}First case: when~$\abs{C_{max}}\geqslant 7 n^b$ (picture on the left), we cut a piece of~$C_{max}$ with the desired size~$m$ (by closing~$H_1$) and we divide the remaining part of~$C_{max}$ into pieces smaller than~$n^b$ (by closing~$H_2$). Second case: when~$\abs{C_{max}}<7 n^b$ (picture on the right), we cut some intermediate clusters (by closing~$H_1$) and we cut a piece of~$C_{max}$ with the desired size~$m$ (by closing~$H_2$). In both figures, the hatched region is~$B_n^b(\omega)\backslash B_n^b(\omega_{H_1\cup H_2})$, i.e., the vertices whose cluster is no longer larger than~$n^b$ after the cutting procedure.}
\end{center}
\end{figure}

\point \textbf{Second case:} Now assume that~$2n^b\leqslant \abs{C_{max}(\omega)}< 7n^b$.
The first step in this case is to find~$H_1\subset\En$ such that
\begin{equation}
\label{objH1}
s\ \leqslant\ \abs{B_n^b\big(\omega_{H_1}\big)}\ <\ s+n^b\,.
\end{equation}
If we already have~$\abs{B_n^b(\omega)}< s+n^b$, then we take~$H_1=\varnothing$.
Assume that~$\abs{B_n^b(\omega)}\geqslant s+n^b$.
As explained in the sketch of the proof, the idea to obtain~(\ref{objH1}) is to cut one or several intermediate clusters. By intermediate, we mean clusters containing at least~$n^b$ vertices but which are distinct from~$C_{max}$.
Each cutting of an intermediate cluster will have to either yield directly~(\ref{objH1}) or to decrease~$\abs{B_n^b}$ by at least~\smash{$\Ceil{n^b}-1$}, maintaining~$\abs{B_n^b}\geqslant s$.
Because we start from~$s+n^b\leqslant\abs{B_n^b(\omega)}\leqslant s+6n^b$, at most~$6$ such cuttings are necessary to eventually obtain~(\ref{objH1}).
Let us now detail the cutting we perform on these intermediate clusters.
We look for~$H_0\subset\En$ such that
\begin{equation}
\label{objHH2}
s\ \leqslant\ \abs{B_n^b\big(\omega_{H_0}\big)}\ \leqslant\ \Big(\abs{B_n^b(\omega)}-\big(\Ceil{n^b}-1\big)\Big)\vee\big(s+\Ceil{n^b}-1\big)\,.
\end{equation}
Notice that
$$\abs{B_n^b(\omega)}
\ \geqslant\ s
\ \geqslant\ 12n^b
\ >\ \abs{C_{max}(\omega)}\,,$$
which allows us to choose~$x\in B_n^b(\omega)\backslash C_{max}(\omega)$, meaning that the cluster~$C_x=C(x,\,\omega)$ contains at least~\smash{$\Ceil{n^b}$} vertices but it is not the selected largest cluster (it is what we call an intermediate cluster).
We now distinguish between several subcases depending on the size of this cluster~$C_x$:

\noindent $\circ\ $ If~$\abs{C_x}\geqslant 2\Ceil{n^b}-2$, then lemma~\ref{lemme_boucher} provides us with~$H_0\subset\En$ such that
$$\big|C\big(x,\,\omega_{H_0}\big)\big|\ =\ \abs{C_x}-\big(\Ceil{n^b}-1\big)\,.$$
We then have
$$\big|B_n^b\big(\omega_{H_0}\big)\big|\ =\ \abs{B_n^b(\omega)}-\big(\Ceil{n^b}-1\big)\,.$$

\noindent $\circ\ $ If~$\Ceil{n^b}\leqslant\abs{C_x}<2\Ceil{n^b}-2$ and~$\abs{B_n^b(\omega)}-\abs{C_x}\geqslant s$, then we choose a set of edges~$H_0\subset\En$ such that~$\big|C\big(x,\,\omega_{H_0}\big)\big|=\Ceil{n^b}-1$.
In this case, we get
$$\big|B_n^b\big(\omega_{H_0}\big)\big|\ =\ \abs{B_n^b(\omega)}-\abs{C_x}
\ \leqslant\ \abs{B_n^b(\omega)}-\Ceil{n^b}\,.$$

\noindent $\circ\ $ Otherwise, if~$\Ceil{n^b}\leqslant\abs{C_x}<2\Ceil{n^b}-2$ and~$\abs{B_n^b(\omega)}-\abs{C_x}<s$, then we choose~$H_0$ such that~$\big|C\big(x,\,\omega_{H_0}\big)\big|=\Ceil{n^b}$, which entails that
$$\big|B_n^b\big(\omega_{H_0}\big)\big|
\ =\ \abs{B_n^b(\omega)}-\big(\abs{C_x}-\Ceil{n^b}\big)\ <\ s+\Ceil{n^b}\,.$$

\noindent In all three cases,~$H_0$ satisfies~(\ref{objHH2}) and lemma~\ref{lemme_boucher} ensures that~$H_0$ can be chosen with
$$\abs{H_0}
\ \leqslant\ K\abs{C_x}^{(d-1)/d}
\ \leqslant\ K\abs{C_{max}(\omega)}^{(d-1)/d}
\ \leqslant\ K\big(7n^b\big)^{(d-1)/d}
\ \leqslant\ Ks^{b(d-1)/d}\,.$$
After closing this set of edges~$H_0$, we still have~$\abs{B_n^b}\geqslant s$ and, either~$\abs{B_n^b}<s+n^b$ or~$\abs{B_n^b}$ has decreased by at least~$n^b-1$.
Therefore, we can repeat this operation, and after at most 6 steps, we obtain~\smash{$s\leqslant \abs{B_n^b}< s+n^b$}.
Thus, we end up with~$H_1\subset\En$ satisfying~(\ref{objH1}) and such that~\smash{$\abs{H_1}\leqslant 42Kn^{b(d-1)/d}$}.
As we have not touched~$C_{max}(\omega)$ during this procedure, we still have
$$2n^b\ \leqslant\ \big|C_{max}\big(\omega_{H_1}\big)\big|\ <\ 7n^b\,.$$
Letting now
$$m\ =\ \big|C_{max}\big(\omega_{H_1}\big)\big|-\big|B_n^b\big(\omega_{H_1}\big)\big|+s\,,$$
it follows from~(\ref{objH1}) that
$$n^b
\ \leqslant\ m\ \leqslant\ \big|C_{max}\big(\omega_{H_1}\big)\big|\,.$$
Hence, using again lemma~\ref{lemme_boucher}, we can find~$H_2\subset\En$ with~\smash{$\abs{H_2}\leqslant K\big(7n^b\big)^{(d-1)/d}\leqslant K s^{(d-1)/d}$} and such that closing the edges of~$H_2$ divides the cluster~$C_{max}\big(\omega_{H_1}\big)$ into one connected component of size exactly~$m$ and one or several other connected components, whose total size is
$$\big|C_{max}\big(\omega_{H_1}\big)\big|-m
\ =\ \big|B_n^b\big(\omega_{H_1}\big)\big|-s
\ <\ n^b\,.$$
Therefore, writing~$H=H_1\cup H_2$, we have~\smash{$\abs{H}\leqslant 2Ks^{(d-1)/d}$} and
$$\big|B_n^b\big(\omega_H\big)\big|
\ =\ \big|B_n^b\big(\omega_{H_1}\big)\big|-\big|C_{max}\big(\omega_{H_1}\big)\big|+m
\ =\ s\,.$$

\noindent In both cases, we obtain the claimed result, with~$K_1=2K+11\times 4^{d+1}d^2$.
\end{proof}

We are now in a position to prove our second lower bound on~$Z_n$.

\begin{proof}[Proof of lemma~\ref{lemme_minoration_Z_n_bis}]
As explained above, we define two couplings, in order to be able not only to close edges, but also to reopen edges.

\souspreuve{Definition of the two couplings:}
The first coupling is defined as in the previous proofs. We write~$\En = \acc{e_1,\,\ldots,\,e_r}$ with~$r=\abs{\En}$, and we consider i.i.d.\ random variables
$$\left(X_{t,e}\right)_{t\in\acc{0,\,\ldots,\,n^d-1},\,e\in \En}$$
with Bernoulli law of parameter~$\exp(-1/n^a)$.
For~$t_0\in\acc{0,\,\ldots,\,n^d}$, we define
$$\omega(t_0)\ :\ e\in \En\ \longmapsto\ \min\limits_{0\leqslant t<t_0}\,X_{t,e}\,.$$
In addition to this, we draw a uniform random~$M\in\acc{0,\,\ldots,\,n^d}$, uniform independent edges~$\varepsilon_1,\,\ldots,\,\varepsilon_M\in\En$ and i.i.d.\ random variables~$\left(X'_{t,e}\right)_{t\leqslant n^d-1,\,e\in \En}$ again with Bernoulli law of parameter~$\exp(-1/n^a)$.
The second coupling of configurations is then defined by
$$\forall\, t_0\in\acc{0,\,\ldots,\,n^d}\qquad
\omega'(t_0)\ :\ e\in \En\ \longmapsto\ \left\{\begin{aligned}
&\min\limits_{0\leqslant t<t_0}\,X'_{t,e}\quad\text{if }e\in\acc{\varepsilon_1,\,\ldots,\,\varepsilon_M}\,,\\
&\omega(t_0)\quad\text{otherwise,}
\end{aligned}\right.$$
with again intermediate configurations defined for all~$t\in\acc{0,\,\ldots,\,n^d-1}$ and~$s_0\in\acc{0,\,\ldots,\,r}$ by
$$
\omega'(t,\,s_0)\ :\ e_s\in \En\ \longmapsto\ 
\left\{
\begin{aligned}
&\omega'(t+1)(e_s)\ &\text{if }s\leqslant s_0\,,\\
&\omega'(t)(e_s)\ &\text{otherwise.}
\end{aligned}
\right.$$
Hence, the two decreasing couplings have the same law, with~$\omega(t)\stackrel{d}{=}\omega'(t)\stackrel{d}{=}\Proba_{\varphi_n(t)}$, and the second coupling differs from the first one only on the edges~$\varepsilon_1,\,\ldots,\,\varepsilon_M$.
This set of edges is chosen at random, but we will be interested in the event that~$\acc{\varepsilon_1,\,\ldots,\,\varepsilon_M}=H_1\cup H_2$, where~$H_1$ is a set of edges which we want to leave open longer in the second coupling, and~$H_2$ is a set of edges which we want to close sooner in the second coupling.
Thus, this double coupling will allow us to perform the surgery procedure of lemmas~\ref{lemmeCmaxdouble} (which involves opening edges) and~\ref{lemmeSurgeryBnb} (which involves closing edges), starting from a given configuration in the first coupling.
Note that, instead of constructing such a double coupling, we could also have used the standard estimate of, for example, lemma~6.3 in~\cite{CerfPisztora2000}, about the price to open or close specific edges.
\\

\souspreuve{Reconstitution of a big enough cluster:}
As in the proof of lemma~\ref{lemme_minoration_Z_n}, we consider the instant~$(T,\,S)$ defined by~(\ref{defTS}), which is such that
$$T+1\ \leqslant\ \abs{B_n^b\big(\omega(T,\,S)\big)}\ \leqslant\ T+1+2n^b\,.$$
With~$\tau_n^+$ defined as in~(\ref{defTau}) and~$\tau_n^-$ given by
$$\tau_n^-\ =\ \Ent{n^a\left(-\ln\left(\frac{p_c+1}{2}\right)\right)}\,,$$
we have
\begin{align*}
\Proba\big(T\notin[\tau_n^-,\,\tau_n^+]\big)
\ &\leqslant\ \Proba\Big(\abs{B_n^b\big(\omega(\tau_n^-)\big)}\leqslant \tau_n^- +1+2n^b\quador \abs{B_n^b\big(\omega(\tau_n^+)\big)}> \tau_n^++1+2n^b\Big)\\
\ &\leqslant\ \Proba_{(p_c+1)/2}\Big(\abs{B_n^b}\leqslant \tau_n^-+1+2n^b\Big)+\Proba_{p_c/2}\Big(\abs{B_n^b}> \tau_n^++1+2n^b\Big)
\ \cvninfty\ 0\,,\numberthis\label{Ptn}
\end{align*}
thanks to lemmas~\ref{lemme_majo_souscritique} and~\ref{lemme_majo_surcritique}.
This allows us, in the sequel, to reason conditionally on the event that~$\tau_n^-\leqslant T\leqslant \tau_n^+$.
Thus, we have
$$\abs{B_n^b\big(\omega(T,\,S)\big)}
\ \geqslant\ T+1
\ \geqslant\ \tau_n^-+1
\ \geqslant\ 2^{d+1}n^b$$
for~$n$ large enough, given that~$a>b$.
This allows us to apply lemma~\ref{lemmeCmaxdouble}, which provides us with~$H_1=H_1\big(\omega(T,\,S)\big)\subset\En$ such that the configuration~$\omega(T,\,S)^{H_1}$, where the edges of~$H_1$ are reopened, contains a cluster with at least~$2n^b$ vertices, and such that
\begin{equation}
\label{boundH1}
\abs{H_1}
\ \leqslant\ 4d\frac{n^{1+b/d}}{\abs{B_n^b\big(\omega(T,\,S)\big)}^{1/d}}
\ \leqslant\ 4d\frac{n^{1+b/d}}{(\tau_n^-+1)^{1/d}}
\ \leqslant\ K'n^{1+b/d-a/d}\,,
\end{equation}
where~$K'$ is a positive constant.
We can choose this~$H_1$ minimal in the sense of inclusion, so that either~$H_1=\varnothing$ if we already had~$\abs{C_{max}\big(\omega(T,\,S)\big)}\geqslant 2n^b$ or all the edges of~$H_1$ belong to~\smash{$C_{max}\big(\omega(T,\,S)^{H_1}\big)$} and moreover~\smash{$C_{max}\big(\omega(T,\,S)^{H_1}\big)$} contains at most~$4n^b$ vertices (otherwise a smaller set~$H_1$ would work), so that in both cases,
$$T+1\ \leqslant\ \abs{B_n^b\big(\omega(T,\,S)^{H_1}\big)}
\ \leqslant\ \abs{B_n^b\big(\omega(T,\,S)\big)}+4n^b
\ \leqslant\ T+1+6n^b\,.$$

\souspreuve{Surgery step:}
The next step is to find a set of edges~$H_2=H_2\big(\omega(T,\,S)^{H_1}\big)\subset\En$ such that
\begin{equation}
\label{objH2}
\big|B_n^b\big(\big(\omega(T,\,S)^{H_1}\big)_{H_2}\big)\big|\ =\ T+1\,.
\end{equation}
Still assuming~$\tau_n^-\leqslant T\leqslant \tau_n^+$, applying lemma~\ref{lemmeSurgeryBnb} with~$s=T+1\geqslant \tau_n^-\geqslant 12n^b$ (for~$n$ large enough), we can construct~$H_2\subset\En$, which can be defined as a function of the configuration~$\omega(T,\,S)^{H_1}$, satisfying~(\ref{objH2}) and whose cardinality is bounded by
\begin{equation}
\label{boundH2}
\abs{H_2}\ \leqslant\ K_1\big(T+1\big)^{(d-1)/d}
\ \leqslant\ K_1\big(\tau_n^++1\big)^{(d-1)/d}
\ \leqslant\ K''n^{a(d-1)/d}\,,
\end{equation}
where~$K''$ is a positive constant, since~$\tau_n^+=O(n^a)$.
\\

\souspreuve{The happy event:}
We now consider the event (where~$H_1=H_2=\varnothing$ if~$T\notin[\tau_n^-,\,\tau_n^+]$)
$$\mathcal{E}\ =\ \acc{\begin{aligned}
&M=\abs{H_1\cup H_2}\,,\quad
\acc{\varepsilon_1,\,\ldots,\,\varepsilon_M}=H_1\cup H_2\,,\\
&\forall s>S\qquad e_s\in \Edge{B_n^b\big(\omega(T,\,S)\big)}\ \Rightarrow\ X_{T,\,e_s}=1\,,\\
&\forall e\in H_1\backslash H_2\quad\forall t\in\acc{0,\,\ldots,\,T}\qquad X'_{t,\,e}=1\,,\\
&\forall e\in H_2\qquad X'_{0,\,e}=0
\end{aligned}}\,.$$
If this event occurs and~$\tau_n^-\leqslant T\leqslant \tau_n^+$, then we have
$$\abs{B_n^b\big(\omega'(T+1,\,0)\big)}\ =\ \big|B_n^b\big(\big(\omega(T,\,S)^{H_1}\big)_{H_2}\big)\big|\ =\ T+1\,,$$
whence
\begin{equation}
\label{eq1679}
Z_n\ \geqslant\ \Proba\Big(\,\abs{B_n^b\big(\omega'(T+1,\,0)\big)}=T+1\,\Big)
\ \geqslant\ \Proba\Big(\,\mathcal{E}\cap\acc{\tau_n^-\leqslant T\leqslant \tau_n^+}\,\Big)\,.
\end{equation}
As in the proof of lemma~\ref{Cmax_lemme_minoration_Z_n}, we now take~$(t_0,\,s_0)$ and~\smash{$\omega_0\in\acc{0,1}^{\En}$} such that~$\tau_n^-\leqslant t_0\leqslant \tau_n^+$ and
$$\Proba\big(\mathcal{C}_{t_0,\,s_0,\,\omega_0}\big)\ >\ 0
\qquadou
\mathcal{C}_{t_0,\,s_0,\,\omega_0}\ =\ \Big\{\,(T,\,S)=(t_0,\,s_0)\quadet\omega(T,\,S)=\omega_0\,\Big\}\,.$$
Because~$H_1$ and~$H_2$ only depend on~$T$ and~$\omega(T,\,S)$, we may consider the deterministic sets~$H_1$ and~$H_2$ associated with~$T=t_0$ and~$\omega(T,\,S)=\omega_0$.
Then we consider the event
$$\widetilde{\mathcal{E}}_{t_0,\,s_0,\,\omega_0}\ =\ \acc{\begin{aligned}
&M=\abs{H_1\cup H_2}\,,\quad
\acc{\varepsilon_1,\,\ldots,\,\varepsilon_M}=H_1\cup H_2\,,\\
&\forall s>s_0\qquad e_s\in \Edge{B_n^b\big(\omega_0\big)}\ \Rightarrow\ X_{t_0,\,e_s}=1\,,\\
&\forall e\in H_1\backslash H_2\quad\forall t\in\acc{0,\,\ldots,\,t_0}\qquad X'_{t,\,e}=1\,,\\
&\forall e\in H_2\qquad X'_{0,\,e}=0
\end{aligned}}\,,$$
which is independent of~$\mathcal{C}_{t_0,\,s_0,\,\omega_0}$ because~$\mathcal{C}_{t_0,\,s_0,\,\omega_0}$ depends only on the variables~$X_{t,\,e_s}$ with~$(t,\,s)\leqslant(t_0,\,s_0)$.
Therefore, we can write
\begin{align*}
\Proba\big(\mathcal{E}\,\big|\,\mathcal{C}_{t_0,\,s_0,\,\omega_0}\big)
\ &=\ \Proba\left(\widetilde{\mathcal{E}}_{t_0,\,s_0,\,\omega_0}\,\Big|\,\mathcal{C}_{t_0,\,s_0,\,\omega_0}\right)
\ =\ \Proba\left(\widetilde{\mathcal{E}}_{t_0,\,s_0,\,\omega_0}\right)\\
\ &=\ \frac{1}{n^d+1}\left(\frac{1}{n^d}\right)^{\abs{H_1}+\abs{H_2}}
\prod_{\substack{s>s_0\\ e_s\in\Edge{B_n^b(\omega_0)}}}{\Proba\big(\,X_{t_0,\,e_s}=1\,\big)}
\prod_{e\in H_1}\prod_{t=0}^{t_0}{\Proba\big(\,X'_{t,\,e}=1\,\big)}
\prod_{e\in H_2}\Proba\big(\,X'_{0,\,e}=0\,\big)\\
\ &\geqslant\ \left(\frac{1}{n^d+1}\right)^{1+\abs{H_1}+\abs{H_2}}\big(e^{-1/n^a}\big)^{\abs{\Edge{B_n^b(\omega_0)}}+\abs{H_1}(t_0+1)}\big(1-e^{-1/n^a}\big)^{\abs{H_2}}\,.
\end{align*}
We now use the bounds~(\ref{boundH1}) and~(\ref{boundH2}) on~$\abs{H_1}$ and~$\abs{H_2}$, the upper bound
$$\abs{\Edge{B_n^b(\omega_0)}}
\ \leqslant\ d\abs{B_n^b(\omega_0)}
\ \leqslant\ d\big(t_0+1+2n^b\big)
\ \leqslant\ d\big(\tau_n^++1+2n^b\big)$$
and the fact that, for~$n$ large enough,~$1-e^{-1/n^a}\geqslant 1/(n^d+1)$, to obtain
$$\Proba\big(\mathcal{E}\,\big|\,\mathcal{C}_{t_0,\,s_0,\,\omega_0}\big)
\ \geqslant\ \left(\frac{1}{n^d+1}\right)^{1+K'n^{1+b/d-a/d}+2K''n^{a(d-1)/d}}\exp\left(-\frac{d(\tau_n^++1+2n^b)+(\tau_n^++1)K'n^{1+b/d-a/d}}{n^a}\right)\,.$$
This bound being uniform with respect to~$t_0$,~$s_0$ and~$\omega_0$ (as long as~$\tau_n^-\leqslant t_0\leqslant \tau_n^+$), we obtain that
$$\Proba\big(\,\mathcal{E}\,\big|\,\tau_n^-\leqslant T\leqslant \tau_n^+\,\big)
\ \geqslant\ \left(\frac{1}{n^d+1}\right)^{1+K'n^{1+b/d-a/d}+2K''n^{a(d-1)/d}}\exp\left(-\frac{(d+K'n^{1+b/d-a/d})(\tau_n^++1)+2dn^b}{n^a}\right)\,.$$
Plugging this into~(\ref{eq1679}) and recalling~(\ref{Ptn}), we get
\begin{align*}
\ln Z_n
\ &\geqslant\ \ln \Proba\big(\tau_n^-\leqslant T\leqslant \tau_n^+\big)
-\big(1+K'n^{1+b/d-a/d}+2K''n^{a(d-1)/d}\big)\ln\big(n^d+1\big)\\
&\qquad\qquad-\frac{\big(d+K'n^{1+b/d-a/d}\big)\big(\tau_n^++1\big)+2dn^b}{n^a}\\
\ &=\ o(1)+O\big(n^{1+b/d-a/d}\ln n\big)+O\big(n^{a(d-1)/d}\ln n\big)
\ =\ O\big(n^c\ln n\big)\,,
\end{align*}
with~$c=(1+b/d-a/d)\vee(a-a/d)$.
\end{proof}

\subsection{Proof of the convergence result}

We now obtain the third case of theorem~\ref{thm_clusters_convergence}, proceeding as in section~\ref{section_Cmax_preuve_resultat}.

\begin{proof}[Proof of theorem~\ref{thm_clusters_convergence}, case~($iii$)]
Let~$\varepsilon>0$ and~$0<b<a<d$.
The upper bound
\begin{equation}
\label{prthm5}
\limsupn\,\frac{1}{n^a}\,\ln\mu_n\big(p_n<p_c-\varepsilon\big)
\ <\ 0
\end{equation}
follows from the exponential decay in the subcritical regime (lemma~\ref{lemme_majo_souscritique}) and the lower bound on~$Z_n$ given by lemma~\ref{lemme_minoration_Z_n}, using that~$b<a$.
Similarly, lemma~\ref{lemme_majo_surcritique} together with the other lower bound on~$Z_n$ given by lemma~\ref{lemme_minoration_Z_n_bis} implies
\begin{equation}
\label{prthm6}
\limsupn\,\frac{1}{n^{d-b/d}}\,\ln\mu_n\big(p_n>p_c+\varepsilon\big)
\ <\ 0\,,
\end{equation}
using that
$$c\ =\ \left(1-\frac{a}{d}+\frac{b}{d}\right)\vee\left(a-\frac{a}{d}\right)
\ <\ 1\vee\left(d-\frac{a}{d}\right)
\ <\ d-\frac{b}{d}\,.$$
To obtain a lower bound on~\smash{$\mu_n\big(p_n< p_c-\varepsilon\big)$}, we use the same technique as in section~\ref{section_Cmax_preuve_resultat}, choosing this time a parameter~$a'\in(a,\,2a-b)$.
Using the notations of the proof of lemma~\ref{lemme_minoration_Z_n} and~$t_n^-=\Ent{n^a\big(-\ln(p_c-\varepsilon)\big)}$, we write
$$\mu_n\big(p_n< p_c-\varepsilon\big)
\ \geqslant\ \Proba\Big(\,\mathcal{E}\,\cap\,\acc{T\geqslant t_n^-}\,\Big)
\ \geqslant\ \Proba\Big(\,\mathcal{E}\,\cap\,\big\{t_n^-\leqslant T\leqslant n^{a'}\big\}\,\Big)\,.$$
As in section~\ref{section_Cmax_preuve_resultat}, using that~$\varphi_n(t_n^-)\geqslant p_c-\varepsilon$ and~\smash{$\varphi_n\big(n^{a'}\big)<p_c/2$} for~$n$ large enough, and using the exponential estimate of lemma~\ref{lemme_majo_souscritique}, we have, for~$n$ large enough,
$$\Proba\big(t_n^-\leqslant T\leqslant n^{a'}\big)
\ \geqslant\ \Proba_{p_c-\varepsilon}\Big(\,\abs{B_n^b}>t_n^-+1+2n^b\,\Big)
-\Proba_{p_c/2}\Big(\,\abs{B_n^b}>n^{a'}\,\Big)
\ \geqslant\ e^{-Cn^a}-e^{-C'n^{a'}}
\ \geqslant\ \frac{e^{-Cn^a}}{2}\,.$$
Combining this with the lower bound~(\ref{eq761}) on the conditional probability~\smash{$\Proba\big(\mathcal{E}\,\big|\,T\big)$}, we obtain
\begin{align*}
\frac{1}{n^a}\ln\mu_n\big(p_n< p_c-\varepsilon\big)
\ &\geqslant\ -C-\frac{\ln 2}{n^a}-\frac{\big(2n^b+1\big)d\big(n^{a'}+1+2n^b\big)}{n^{2a}}\\
\ &=\ -C-\frac{\ln 2}{n^a}+O\left(\frac{1}{n^{2a-b-a'}}\right)
\ \cvninfty\ -C
\ >\ -\infty\,.\numberthis\label{prthm7}
\end{align*}
We now turn to the other lower bound. With~$t_n^+$ given by~(\ref{defTn}) we have
\begin{equation}
\label{eq6628}
\mu_n\big(p_n> p_c+\varepsilon\big)
\ \geqslant\ \Proba_{\varphi_n(t_n^+-1)}\Big(\abs{B_n^b}=t_n^+-1 \Big)\,.
\end{equation}
We now consider a configuration~\smash{$\omega\in\acc{0,1}^{\En}$} such that~$\abs{B_n^b(\omega)}\geqslant t_n^+-1$, and we use again the surgery procedure detailed in section~\ref{sectionBnminoZn} to force~$\abs{B_n^b}=t_n^+-1$.
Dividing the box~$\Lambda(n)$ into hypercubic boxes of side~\smash{$N_n=\Ceil{n^{b/d}}-1$} (and hence of volume~$<n^b$), and closing all the edges on the boundaries of these boxes, we can find~$H_1\subset\En$ such that~$B_n^b\big(\omega_{H_1}\big)=\varnothing$ and
$$\abs{H_1}\ =\ O\left(\frac{n}{N_n}n^{d-1}\right)\ =\ O\big(n^{d-b/d}\big)\,.$$
Rather than closing all this set of edges~$H_1$, we choose a maximal subset~$H_2\subset H_1$ such that~\smash{$\abs{B_n^b\big(\omega_{H_2}\big)}\geqslant t_n^+-1$}.
The maximality of~$H_2$ then ensures that
$$t_n^+-1\ \leqslant\ \abs{B_n^b\big(\omega_{H_2}\big)}\ \leqslant\ t_n^+-1+2n^b\,,$$
since closing one edge can at most diminish~$\abs{B_n^b}$ by~$2n^b$, as explained in the proof of lemma~\ref{lemme_minoration_Z_n}.
Using now the geometrical lemmas~\ref{lemmeCmaxdouble} and~\ref{lemmeSurgeryBnb}, we can find~$H_3,\,H_4\subset \En$ such that
$$\abs{B_n^b\big(\big((\omega_{H_2})^{H_3}\big)_{H_4}\big)}
\ =\ t_n^+-1
\qquadavec
\abs{H_3}\ =\ O\big(n^{1+b/d-a/d}\big)
\qquadet
\abs{H_4}\ =\ O\big(n^{a(d-1)/d}\big)\,.$$
Following lemma~6.3 in~\cite{CerfPisztora2000}, we obtain
\begin{align*}
\Proba_{\varphi_n(t_n^+-1)}\Big(\abs{B_n^b}=t_n^+-1 \Big)
\ &\geqslant\ \left(\frac{1}{O(n^d)}\right)^{\abs{H_2}+\abs{H_3}+\abs{H_4}}
\Proba_{\varphi_n(t_n^+-1)}\Big(\abs{B_n^b}\geqslant t_n^+-1 \Big)\\
\ &\geqslant\ \exp\Big(-O\big((\ln n)n^{d-b/d}\big)\Big)\Proba_{p_c+\varepsilon}\Big(\abs{B_n^b}\geqslant t_n^+-1 \Big)\,.
\end{align*}
Plugging this into~(\ref{eq6628}) and recalling that the probability on the right-hand side tends to~$1$ according to lemma~\ref{lemme_majo_surcritique}, we obtain
\begin{equation}
\label{prthm8}
\liminfn\,\frac{1}{(\ln n)n^{d-b/d}}\ln\mu_n\big(p_n> p_c+\varepsilon\big)
\ >\ -\infty\,.
\end{equation}
The last case of theorem~\ref{thm_clusters_convergence} then follows from~(\ref{prthm5}),~(\ref{prthm6}),~(\ref{prthm7}) and~(\ref{prthm8}).
\end{proof}

\subsection{A control on the convergence speed}
\label{prthmvitesse}

We now make the previous arguments more precise, in order to obtain an estimate on the convergence speed of~$p_n$ towards the critical point~$p_c$. We assume that there exist real numbers~$\beta>0$,~$\gamma>0$ such that
$$\limsup\limits_{\substack{p\rightarrow p_c\\p>p_c}}\,\,\frac{\ln\theta(p)}{\ln(p-p_c)}\ \leqslant\ \beta
\qquadet
\liminf\limits_{\substack{p\rightarrow p_c\\p<p_c}}\,\,\frac{\ln\chi(p)}{\ln(p_c-p)}\ \geqslant\ -\gamma\,,$$
and we fix for all this part some real numbers~$a$,~$b$ and~$c$ such that
$$0\ <\ b\ <\ a\ <\ d
\qquadet
0\ <\ c\ <\ {\rm min}\left(\frac{b}{2\gamma},\,\frac{a-b}{2\gamma},\,\frac{d-a}{\beta},\,\frac{d-bd-b}{2\beta}\right)\,.$$
We also choose~$\beta'$ and~$\gamma'$ such that
$$\beta\ <\ \beta'\ <\ \frac{1-b}{c}\wedge\frac{d-a}{c}
\qquadet
\gamma\ <\ \gamma'\ <\ \frac{b}{2c}\wedge\frac{a-b}{2c}\,.$$
Therefore, we can find~$\varepsilon_0>0$ such that, for~$0<\varepsilon<\varepsilon_0$,
$$\theta\left(p_c+\varepsilon\right)\ \geqslant\ \varepsilon^{\beta'}
\qquadet
\chi\left(p_c-\varepsilon\right)\ \leqslant\ \frac{1}{\varepsilon^{\gamma'}}\,.$$

\subsubsection{Subcritical phase}
\begin{lemma}
\label{vitesse_souscrit}
We have the upper bound
$$\forall\varepsilon>0\quad
\forall A>0\qquad
\limsup\limits_{n\rightarrow\infty}\,\,\frac{1}{n^{a-2\gamma'c}}\ln\Proba_{p_c-\varepsilon/n^c}\Big(\abs{B_n^b}>An^a\Big)
\ <\ 0\,.$$
\end{lemma}
\begin{proof}
Take~$A>0$ and~$0<\varepsilon<p_c$. Without loss of generality, we can assume that~$\varepsilon<\varepsilon_0$.
We repeat the proof of lemma~\ref{lemme_majo_souscritique}, but replacing~$p$ with~$p_c-\varepsilon/n^c$.
To control~$\Proba_p\big(\abs{C_{\Lambda(n)}(x_i)}\geqslant n_i\big)$, the upper bound~(\ref{eq675}) is no longer sufficient, because we would need to specify the dependence in~$n$ of~$\lambda(p_c-\varepsilon/n^c)$.
Thus, we use another inequality provided by the same theorem~6.75 in~\cite{Grimmett}, which states that
\begin{equation}
\label{exponential_decay_chi}
\forall p<p_c\qquad
\forall n>\chi(p)^2\qquad
\Proba_p\big(\abs{C(0)}\geqslant n\big)\ \leqslant\ 2\exp\left(-\frac{n}{2\chi(p)^2}\right)\,.
\end{equation}
With our choice of~$\gamma'$, we have that
$$\chi\left(p_c-\frac{\varepsilon}{n^c}\right)^2\ \leqslant\ \frac{n^{2\gamma'c}}{\varepsilon^{2\gamma'}}\ =\ o\left(n^b\right)\,.$$
Hence, the condition~$n^b \geqslant \chi\left(p_c-\varepsilon/n^c\right)^2$ is satisfied for~$n$ large enough.
This allows us to apply~(\ref{exponential_decay_chi}) to get, with~\smash{$N_n=1+\Ent{An^{a-b}}$} as in the proof of lemma~\ref{lemme_majo_souscritique},
\begin{align*}
\Proba_{p_c-\varepsilon/n^c}\Big(\abs{B_n^b}>An^a\Big)
\ &\leqslant\ 
\sum_{k=1}^{N_n}\,
\sum_{x_1,\,\ldots,\,x_k\in\Lambda(n)}
\,\sum_{\substack{n^b\leqslant n_1,\,\ldots,\,n_k\leqslant n^d\\n_1+\cdots+n_k>An^a}}
\prod_{i=1}^k \,2\exp\left(-\frac{n_i}{2\chi(p_c-\varepsilon/n^c)^2}\right)\\
\ &\leqslant\ N_n n^{2dN_n} \exp\big(-A\varepsilon^{2\gamma'}n^{a-2\gamma'c}\big)\,.
\end{align*}
Therefore, we obtain
\begin{align*}
\frac{1}{n^{a-2\gamma'c}}\ln\Proba_{p_c-\varepsilon/n^c}\Big(\abs{B_n^b}>An^a\Big)
\ \leqslant\ \frac{\ln N_n}{n^{a-2\gamma'c}}+\frac{2N_nd\ln n}{n^{a-2\gamma'c}}-A\varepsilon^{2\gamma'}
\ &=\ O\left(\frac{\ln n}{n^{a-2\gamma'c}}\right)+O\left(\frac{1}{n^{b-2\gamma'c}}\right)-A\varepsilon^{2\gamma'}\\
\ &=\ o(1)-A\varepsilon^{2\gamma'}\,,
\end{align*}
which proves the desired upper bound.
\end{proof}

\subsubsection{Supercritical phase}
\begin{lemma}
\label{vitesse_surcrit}
We have the upper bound
$$\forall\varepsilon>0\quad
\forall A>0\qquad
\limsup\limits_{n\rightarrow\infty}\,\,\frac{1}{n^{d-bd-2\beta'c}}\ln\Proba_{p_c+\varepsilon/n^c}\Big(\abs{B_n^b}<An^a\Big)
\ <\ 0\,.$$
\end{lemma}

\noindent This bound is rougher than the bound we proved in lemma~\ref{lemme_majo_surcritique}, but it presents the advantage of using only~$\theta(p)$ which we have assumed to scale as~$(p-p_c)^{\beta}$.
The counterpart is that it only works with~$b<1$.
\begin{proof}
We let~$N=\Ceil{n^b}$, and we divide the box~$\Lambda(n)$ into smaller boxes of side~$3N$, leaving apart the remainder, meaning that we write
$$\Lambda(n)\ \supset\ \bigsqcup_{i=1}^{\Ent{n/(3N)}^d}B_i\,,$$
where the boxes~$B_i=\Lambda(3N)+\tau_i$ are disjoint translates of~$\Lambda(3N)$.
If a vertex~$x\in\Lambda(N)$ is connected to the boundary~$\partial\Lambda(3N)$, then the cluster of~$x$ in the box~$\Lambda(3N)$ contains at least~$N\geqslant n^b$ vertices, whence
$$S_n\ \stackrel{\text{def}}{=}\ \sum_{i=1}^{\Ent{n/(3N)}^d}\abs{\acc{\,x\in\big(\Lambda(N)+\tau_i\big)\ :\ x\connecte\partial B_i\text{ inside }B_i\,}}
\ \leqslant\ \abs{B_n^b}\,.$$
The boxes~$B_i$ being disjoint, the variables in the above sum are pairwise independent.
Besides, the expectation of this sum is
$$\mathbb{E}_{p_c+\varepsilon/n^c}\big[S_n\big]
\ \geqslant\ \Ent{\frac{n}{3N}}^d N^d \theta\left(p_c+\frac{\varepsilon}{n^c}\right)
\ \geqslant\ \Ent{\frac{n}{3N}}^d N^d \frac{\varepsilon^{\beta'}}{n^{\beta'c}}
\ \eqninfty\ \frac{\varepsilon^{\beta'}}{3^d}n^{d-\beta'c}\,.$$
Using that~$d-c\beta'>a$, we deduce that, for~$n$ large enough, we have
$$\mathbb{E}_{p_c+\varepsilon/n^c}\big[S_n\big]\ \geqslant 2An^a\,.$$
Therefore, applying Hoeffding's inequality (see~\cite{Hoeffding}) yields that, for~$n$ large enough,
\begin{align*}
\Proba_{p_c+\varepsilon/n^c}\Big(B_n^b< An^a\Big)
\ &\leqslant\ \Proba_{p_c+\varepsilon/n^c}\bigg(S_n-\mathbb{E}_{p_c+\varepsilon/n^c}\big[S_n\big]
\,<\,-\demi\mathbb{E}_{p_c+\varepsilon/n^c}\big[S_n\big]\bigg)\\
\ &\leqslant\ \exp\left(-\frac{\mathbb{E}_{p_c+\varepsilon/n^c}\big[S_n\big]^2}{2\Ent{n/(3N)}^d N^{2d}}\right)
\ \leqslant\ \exp\left(-\Ent{\frac{n}{3N}}^d\frac{\varepsilon^{2\beta'}}{2n^{2\beta'c}}\right)\,,
\end{align*}
which concludes the proof, using that~$\Ent{n/(3N)}^d\sim n^{d-bd}/3^d$.
\end{proof}

\subsubsection{Conclusion}
Combining the lower bound on~$Z_n$ obtained in lemma~\ref{lemme_minoration_Z_n} and the results of lemmas~\ref{vitesse_souscrit} and~\ref{vitesse_surcrit}, we get the convergence of~$n^c(p_n-p_c)$ to~$0$, using that
$$a-2\gamma'c\ >\ b
\qquadet
d-bd-2\beta'c\ >\ b\,.$$
In fact, using also the lower bound of lemma~\ref{lemme_minoration_Z_n_bis}, a slightly larger admissible window for~$c$ can be obtained, namely
$$0<c<\min\left(\frac{b}{2\gamma},\,\frac{d-a}{\beta},\,\frac{\max\big[a-b,\,a/d+\min(a-1-b/d,0)\big]}{2\gamma},\,\frac{d-bd+\max\big[-b,\,a/d-\max(1+b/d,a)\big]}{2\beta}\right)\,,$$
but we have preferred to present the simpler condition in the statement of the theorem, since none of them is optimal anyway.

\subsection{An alternative model with cluster diameters}

The variant obtained by replacing~$B_n^b$ with the function~$\widetilde{B}_n^b$ defined by~(\ref{Bntilde}) can be dealt with using the same techniques.
The main difference is that, instead of using theorem~6.75 of~\cite{Grimmett}, we use the theorem~5.4 therein, which states that
$$\forall p<p_c\quad
\exists\,\psi(p)>0\quad
\forall n\geqslant 1\qquad
\Proba_p\left(0\connecte\partial\Lambda(n)\right)\ \leqslant\ e^{-n\psi(p)}\,.$$

\noindent \textbf{Acknowledgments:}
We wish to thank two anonymous referees, one who suggested an improvement of our previous version of lemma~\ref{lemme_majo_surcritique}, and another one for numerous remarks which helped us to improve the presentation of our paper.

\bibliography{Some_toy_models_of_SOC_in_percolation.bib}

\newcommand{\etalchar}[1]{$^{#1}$}
\begin{thebibliography}{vdBdLN12}

\bibitem[ACC90]{Wulff2D}
K.~Alexander, J.~T. Chayes, and L.~Chayes.
\newblock The {W}ulff construction and asymptotics of the finite cluster
  distribution for two-dimensional {B}ernoulli percolation.
\newblock {\em Comm. Math. Phys.}, 131(1):1--50, 1990.

\bibitem[ADCKS15]{ADK15seven}
Daniel Ahlberg, Hugo Duminil-Copin, Gady Kozma, and Vladas Sidoravicius.
\newblock Seven-dimensional forest fires.
\newblock {\em Ann. Inst. Henri Poincar\'{e} Probab. Stat.}, 51(3):862--866,
  2015.

\bibitem[Ald00]{Aldous00frozen}
David~J. Aldous.
\newblock The percolation process on a tree where infinite clusters are frozen.
\newblock {\em Math. Proc. Cambridge Philos. Soc.}, 128(3):465--477, 2000.

\bibitem[AST14]{AST14destr}
Daniel Ahlberg, Vladas Sidoravicius, and Johan Tykesson.
\newblock Bernoulli and self-destructive percolation on non-amenable graphs.
\newblock {\em Electron. Commun. Probab.}, 19:no. 40, 6, 2014.

\bibitem[BCKS01]{Birth}
C.~Borgs, J.~T. Chayes, H.~Kesten, and J.~Spencer.
\newblock The birth of the infinite cluster: finite-size scaling in
  percolation.
\newblock {\em Comm. Math. Phys.}, 224(1):153--204, 2001.
\newblock Dedicated to Joel L. Lebowitz.

\bibitem[BF09]{BF}
Xavier Bressaud and Nicolas Fournier.
\newblock On the invariant distribution of a one-dimensional avalanche process.
\newblock {\em Ann. Probab.}, 37(1):48--77, 2009.

\bibitem[BST12]{BenjaminiSeparation12}
Itai Benjamini, Oded Schramm, and \'{A}d\'{a}m Tim\'{a}r.
\newblock On the separation profile of infinite graphs.
\newblock {\em Groups Geom. Dyn.}, 6(4):639--658, 2012.

\bibitem[BTW87]{BTW}
Per Bak, Chao Tang, and Kurt Wiesenfeld.
\newblock Self-organized criticality: An explanation of the 1/f noise.
\newblock {\em Physical review letters}, 59(4):381, 1987.

\bibitem[CCN85]{CCN85invasion}
J.~T. Chayes, L.~Chayes, and C.~M. Newman.
\newblock The stochastic geometry of invasion percolation.
\newblock {\em Comm. Math. Phys.}, 101(3):383--407, 1985.

\bibitem[CdML03]{CML03dynamical}
G~Corso, ESB de~Morais, and LS~Lucena.
\newblock Dynamical evolution of a self-organized-critical percolation model.
\newblock {\em Physica A: Statistical Mechanics and its Applications},
  320:110--118, 2003.

\bibitem[CG16]{CG}
Rapha\"{e}l Cerf and Matthias Gorny.
\newblock A {C}urie-{W}eiss model of self-organized criticality.
\newblock {\em Ann. Probab.}, 44(1):444--478, 2016.

\bibitem[CM11]{CerfMessikh2011}
R.~Cerf and R.~J. Messikh.
\newblock The 2{D}-{I}sing model near criticality: a {FK}-percolation analysis.
\newblock {\em Probab. Theory Related Fields}, 150(1-2):193--217, 2011.

\bibitem[CP00]{CerfPisztora2000}
Rapha\"{e}l Cerf and \'{A}goston Pisztora.
\newblock On the {W}ulff crystal in the {I}sing model.
\newblock {\em Ann. Probab.}, 28(3):947--1017, 2000.

\bibitem[DCGP14]{NearCriticalFKIsing}
Hugo Duminil-Copin, Christophe Garban, and G\'{a}bor Pete.
\newblock The near-critical planar {FK}-{I}sing model.
\newblock {\em Comm. Math. Phys.}, 326(1):1--35, 2014.

\bibitem[Dha06]{Dhar06ASM}
Deepak Dhar.
\newblock Theoretical studies of self-organized criticality.
\newblock {\em Phys. A}, 369(1):29--70, 2006.

\bibitem[DS92]{DS92forest}
Barbara Drossel and Franz Schwabl.
\newblock Self-organized critical forest-fire model.
\newblock {\em Physical review letters}, 69(11):1629, 1992.

\bibitem[DS12]{DS12invasion}
Michael Damron and Art\"{e}m Sapozhnikov.
\newblock Limit theorems for 2{D} invasion percolation.
\newblock {\em Ann. Probab.}, 40(3):893--920, 2012.

\bibitem[DSV09]{DSV09invasion}
Michael Damron, Art\"{e}m Sapozhnikov, and B\'{a}lint V\'{a}gv\"{o}lgyi.
\newblock Relations between invasion percolation and critical percolation in
  two dimensions.
\newblock {\em Ann. Probab.}, 37(6):2297--2331, 2009.

\bibitem[D{\"u}r06a]{Durre06existence}
Maximilian D{\"u}rre.
\newblock Existence of multi-dimensional infinite volume self-organized
  critical forest-fire models.
\newblock {\em Electron. J. Probab.}, 11:no. 21, 513--539, 2006.

\bibitem[D{\"u}r06b]{Durre06uniqueness}
Maximilian D{\"u}rre.
\newblock Uniqueness of multi-dimensional infinite volume self-organized
  critical forest-fire models.
\newblock {\em Electron. Comm. Probab.}, 11:304--315, 2006.

\bibitem[For21]{Ising2DSOC}
Nicolas Forien.
\newblock A planar {I}sing model of self-organized criticality.
\newblock {\em Probab. Theory Related Fields}, 180(1):163--198, 2021.

\bibitem[FSS93]{FSS93}
Nathalie Fraysse, Anne Sornette, and Didier Sornette.
\newblock Critical phase transitions made self-organized: proposed experiments.
\newblock {\em Journal de Physique I}, 3(6):1377--1386, 1993.

\bibitem[GM90]{GrimmettMarstrand}
G.~R. Grimmett and J.~M. Marstrand.
\newblock The supercritical phase of percolation is well behaved.
\newblock {\em Proc. Roy. Soc. London Ser. A}, 430(1879):439--457, 1990.

\bibitem[GPS18a]{NearCriticalPerco}
Christophe Garban, G\'{a}bor Pete, and Oded Schramm.
\newblock The scaling limits of near-critical and dynamical percolation.
\newblock {\em J. Eur. Math. Soc. (JEMS)}, 20(5):1195--1268, 2018.

\bibitem[GPS18b]{GPS18invasion}
Christophe Garban, G\'{a}bor Pete, and Oded Schramm.
\newblock The scaling limits of the minimal spanning tree and invasion
  percolation in the plane.
\newblock {\em Ann. Probab.}, 46(6):3501--3557, 2018.

\bibitem[Gra02]{Grassberger02forest}
Peter Grassberger.
\newblock Critical behaviour of the {D}rossel-{S}chwabl forest fire model.
\newblock {\em New Journal of Physics}, 4(1):17, 2002.

\bibitem[Gri99]{Grimmett}
Geoffrey Grimmett.
\newblock {\em Percolation}, volume 321 of {\em Grundlehren der Mathematischen
  Wissenschaften [Fundamental Principles of Mathematical Sciences]}.
\newblock Springer-Verlag, Berlin, second edition, 1999.

\bibitem[Hoe63]{Hoeffding}
Wassily Hoeffding.
\newblock Probability inequalities for sums of bounded random variables.
\newblock {\em J. Amer. Statist. Assoc.}, 58:13--30, 1963.

\bibitem[HPS99]{HPS99invasion}
Olle H{\"a}ggstr\"{o}m, Yuval Peres, and Roberto~H. Schonmann.
\newblock Percolation on transitive graphs as a coalescent process: relentless
  merging followed by simultaneous uniqueness.
\newblock In {\em Perplexing problems in probability}, volume~44 of {\em Progr.
  Probab.}, pages 69--90. Birkh\"{a}user Boston, Boston, MA, 1999.

\bibitem[Hut20]{Hutchcroft20ASM}
Tom Hutchcroft.
\newblock Universality of high-dimensional spanning forests and sandpiles.
\newblock {\em Probab. Theory Related Fields}, 176(1-2):533--597, 2020.

\bibitem[J{\'a}r03]{Jarai03invasion}
Antal~A. J{\'a}rai.
\newblock Invasion percolation and the incipient infinite cluster in 2{D}.
\newblock {\em Comm. Math. Phys.}, 236(2):311--334, 2003.

\bibitem[J{\'a}r18]{Jarai18ASM}
Antal~A. J{\'a}rai.
\newblock Sandpile models.
\newblock {\em Probab. Surv.}, 15:243--306, 2018.

\bibitem[Kis15]{Kiss15frozen}
Demeter Kiss.
\newblock Frozen percolation in two dimensions.
\newblock {\em Probab. Theory Related Fields}, 163(3-4):713--768, 2015.

\bibitem[KMS15]{KMS15}
Demeter Kiss, Ioan Manolescu, and Vladas Sidoravicius.
\newblock Planar lattices do not recover from forest fires.
\newblock {\em Ann. Probab.}, 43(6):3216--3238, 2015.

\bibitem[Pis96]{PisztoraPGD}
Agoston Pisztora.
\newblock Surface order large deviations for {I}sing, {P}otts and percolation
  models.
\newblock {\em Probab. Theory Related Fields}, 104(4):427--466, 1996.

\bibitem[R{\'a}t09]{Rath09mffp}
Bal\'{a}zs R{\'a}th.
\newblock Mean field frozen percolation.
\newblock {\em J. Stat. Phys.}, 137(3):459--499, 2009.

\bibitem[RT09]{TothErdos09}
Bal{\' a}zs R{\' a}th and B\'{a}lint T{\' o}th.
\newblock Erd{\H o}s-{R}\'{e}nyi random graphs {$+$} forest fires {$=$}
  self-organized criticality.
\newblock {\em Electron. J. Probab.}, 14:no. 45, 1290--1327, 2009.

\bibitem[Sor92]{SornettePerco}
Didier Sornette.
\newblock Critical phase transitions made self-organized: a dynamical system
  feedback mechanism for self-organized criticality.
\newblock {\em Journal de Physique I}, 2(11):2065--2073, 1992.

\bibitem[Sor06]{Sornette}
Didier Sornette.
\newblock {\em Critical phenomena in natural sciences}.
\newblock Springer Series in Synergetics. Springer-Verlag, Berlin, second
  edition, 2006.
\newblock Chaos, fractals, selforganization and disorder: concepts and tools.

\bibitem[SW01]{CriticalExponents}
Stanislav Smirnov and Wendelin Werner.
\newblock Critical exponents for two-dimensional percolation.
\newblock {\em Math. Res. Lett.}, 8(5-6):729--744, 2001.

\bibitem[SWdA{\etalchar{+}}00]{SocialPerco}
Sorin Solomon, Gerard Weisbuch, Lucilla de~Arcangelis, Naeem Jan, and Dietrich
  Stauffer.
\newblock Social percolation models.
\newblock {\em Physica A: Statistical Mechanics and its Applications},
  277(1-2):239--247, 2000.

\bibitem[vdBB04]{BB04self}
J.~van~den Berg and R.~Brouwer.
\newblock Self-destructive percolation.
\newblock {\em Random Structures Algorithms}, 24(4):480--501, 2004.

\bibitem[vdBdLN12]{BLN12frozen}
Jacob van~den Berg, Bernardo N.~B. de~Lima, and Pierre Nolin.
\newblock A percolation process on the square lattice where large finite
  clusters are frozen.
\newblock {\em Random Structures Algorithms}, 40(2):220--226, 2012.

\bibitem[vdBJ05]{BJ05forest}
J.~van~den Berg and A.~A. J\'{a}rai.
\newblock On the asymptotic density in a one-dimensional self-organized
  critical forest-fire model.
\newblock {\em Comm. Math. Phys.}, 253(3):633--644, 2005.

\bibitem[vdBKN12]{BKN12frozen}
Jacob van~den Berg, Demeter Kiss, and Pierre Nolin.
\newblock A percolation process on the binary tree where large finite clusters
  are frozen.
\newblock {\em Electron. Commun. Probab.}, 17:no. 2, 11, 2012.

\bibitem[vdBKN18]{BKN18frozen}
Jacob van~den Berg, Demeter Kiss, and Pierre Nolin.
\newblock Two-dimensional volume-frozen percolation: deconcentration and
  prevalence of mesoscopic clusters.
\newblock {\em Ann. Sci. \'{E}c. Norm. Sup\'{e}r. (4)}, 51(4):1017--1084, 2018.

\bibitem[vdBN17a]{BergNolin17}
Jacob van~den Berg and Pierre Nolin.
\newblock Boundary rules and breaking of self-organized criticality in 2{D}
  frozen percolation.
\newblock {\em Electron. Commun. Probab.}, 22:Paper No. 65, 15, 2017.

\bibitem[vdBN17b]{BN17frozen}
Jacob van~den Berg and Pierre Nolin.
\newblock Two-dimensional volume-frozen percolation: exceptional scales.
\newblock {\em Ann. Appl. Probab.}, 27(1):91--108, 2017.

\bibitem[vdBT01]{BT01signal}
J.~van~den Berg and B.~T\'{o}th.
\newblock A signal-recovery system: asymptotic properties, and construction of
  an infinite-volume process.
\newblock {\em Stochastic Process. Appl.}, 96(2):177--190, 2001.

\bibitem[WW83]{WW83invasion}
David Wilkinson and Jorge~F. Willemsen.
\newblock Invasion percolation: a new form of percolation theory.
\newblock {\em J. Phys. A}, 16(14):3365--3376, 1983.

\bibitem[Zha95]{Zhang95invasion}
Yu~Zhang.
\newblock The fractal volume of the two-dimensional invasion percolation
  cluster.
\newblock {\em Comm. Math. Phys.}, 167(2):237--254, 1995.

\end{thebibliography}

\end{document}